\documentclass[11pt]{amsart}
\usepackage[foot]{amsaddr}
\makeatletter
\def\BState{\State\hskip-\ALG@thistlm}
\renewcommand{\email}[2][]{%
  \ifx\emails\@empty\relax\else{\g@addto@macro\emails{,\space}}\fi%
  \@ifnotempty{#1}{\g@addto@macro\emails{\textrm{(#1)}\space}}%
  \g@addto@macro\emails{#2}%
}
\makeatother

\usepackage[margin=1.5in]{geometry}

\usepackage{enumerate}
\usepackage{graphicx}
\usepackage{enumitem}
\usepackage{color}
\usepackage[hyphens,spaces]{url}
\usepackage{amssymb}
\usepackage{epstopdf}
\usepackage{romanbar}
\usepackage{stmaryrd}
\usepackage{algorithm2e}
\usepackage{MnSymbol}
\usepackage{multirow}
\usepackage{verbatim}
\DeclareGraphicsRule{.tif}{png}{.png}{`convert #1 `dirname #1`/`basename #1 .tif`.png}

\newtheorem{thm}{Theorem}

\newtheorem{lemma}[thm]{Lemma}
\newtheorem{cor}[thm]{Corollary}

\def\restriction#1#2{\mathchoice
	{\setbox1\hbox{${\displaystyle #1}_{\scriptstyle #2}$}
		\restrictionaux{#1}{#2}}
	{\setbox1\hbox{${\textstyle #1}_{\scriptstyle #2}$}
		\restrictionaux{#1}{#2}}
	{\setbox1\hbox{${\scriptstyle #1}_{\scriptscriptstyle #2}$}
		\restrictionaux{#1}{#2}}
	{\setbox1\hbox{${\scriptscriptstyle #1}_{\scriptscriptstyle #2}$}
		\restrictionaux{#1}{#2}}}
\def\restrictionaux#1#2{{#1\,\smash{\vrule height .8\ht1 depth .85\dp1}}_{\,#2}} % restriction

\newcommand{\vc}{\mathbf{c}}

\newcommand{\vm}{\mathbf{m}}
\newcommand{\vx}{\mathbf{x}}
\newcommand{\vy}{\mathbf{y}}

\newcommand{\vv}{\mathbf{v}}
\newcommand{\vw}{\mathbf{w}}
\newcommand{\vf}{\mathbf{f}}
\newcommand{\vn}{\mathbf{n}}

\newcommand{\vp}{\mathbf{p}}
\newcommand{\vvarphi}{\boldsymbol{\varphi}}
\newcommand{\vpsi}{\boldsymbol{\psi}}
\newcommand{\vphi}{\boldsymbol{\phi}}
\newcommand{\vnu}{\boldsymbol{\nu}}

\newcommand{\bz}{\boldsymbol{0}}

\newcommand{\vmu}{\boldsymbol{\mu}}
\newcommand{\vla}{\boldsymbol{\lambda}}
\newcommand{\vA}{\mathbf{A}}
\newcommand{\vB}{\mathbf{B}}
\newcommand{\vS}{\mathbf{S}}

\newcommand{\jump}[1]{[ #1 ]} 
\newcommand{\avrg}[1]{\left\{ #1 \right\}} 
\newcommand{\Th}{\mathcal{T}_h} 
\newcommand{\Eh}{\mathcal{E}_h} 
\newcommand{\E}{\mathcal{E}} 
\newcommand{\K}{T} 
\newcommand{\V}{\mathbb{V}} 
\newcommand{\h}{ {\rm h}} 
\newcommand{\Gh}{\Gamma_h}
 
\newcommand{\I}{\normalfont{\Romanbar{1}}} 
\newcommand{\II}{\normalfont{\Romanbar{2}}}

\newcommand{\tr}{{\rm tr}} 
\newcommand{\di}{\mathop{\rm div}\nolimits} 
 
\newcommand{\A}{\mathbb{A}}

\newcommand{\Ccal}{\mathcal{C}}

%%%%%%%%%%%%%%%%%%%%%%%%%%%%%%%%%%%%%%%%%%%%%%%%%%%%%%%%%%%%%%%%%%%%%%%%%%%%%%%%%

\title[LDG approximation of large deformations of bilayer plates]{Gamma-convergent LDG method for large bending deformations of bilayer plates}
\author{Andrea Bonito}
\address[Andrea Bonito]{Department of Mathematics, Texas A\&M University, College Station, TX 77845, USA. AB was partially supported by the NSF Grant DMS-2110811.}
\email{bonito@tamu.edu}
\author{Ricardo H. Nochetto}
\address[Ricardo H. Nochetto]{Department of Mathematics and Institute for Physical Science
		and Technology \\ University of Maryland,
		College Park, Maryland 20742, USA. RHN and SY were partially supported by the NSF Grant DMS-1908267.}
\email{rhn@umd.edu}
\author{Shuo Yang}
\address[Shuo Yang]{Yanqi Lake Beijing Institute of Mathematical Sciences and Applications, Beijing 101408, China.}
\email{shuoyang@bimsa.cn}

\date{\today}
\begin{document}
\maketitle
%%%%%%%%%%%%%%%%%%%%%%%%%%%%%%%%%%%%%%%%%%%%%%%%%%%%%%%%%%%%%%%%%%%%%%%%%%%%%%%%%

\begin{abstract}

Bilayer plates are slender structures made of two thin layers of different materials. They react to environmental stimuli and undergo large bending deformations with relatively small actuation. The reduced model is a constrained minimization problem for the second fundamental form, with a given spontaneous curvature that encodes material properties, subject to an isometry constraint. We design a local discontinuous Galerkin (LDG) method which imposes a relaxed discrete isometry constraint and controls deformation gradients at barycenters of elements. We prove $\Gamma$-convergence of LDG, design a fully practical gradient flow, which gives rise to a linear scheme at every step, and show energy stability and control of the isometry defect. We extend the $\Gamma$-convergence analysis to piecewise quadratic creases. We also illustrate the performance of the LDG method with several insightful simulations of large deformations, one including a curved crease.

\end{abstract}

%%%%%%%%%%%%%%%%%%%%%%%%%%%%%%%%%%%%%%%%%%%%%%%%%%%%%%%%%%%%%%%%%%%%%%%%%%%%%%%%%
\section{Introduction}\label{sec:intro}
%%%%%%%%%%%%%%%%%%%%%%%%%%%%%%%%%%%%%%%%%%%%%%%%%%%%%%%%%%%%%%%%%%%%%%%%%%%%%%%%%
Bilayer plates are slender structures made of two thin layers of different materials glued together. These layers react differently to non-mechanical stimuli, such as thermal, electrical, and chemical actuation \cite{janbaz2016programming,sodhi2010modeling,kim2008stretchable}. Bilayer plates can undergo large bending deformations using a small amount of energy, which makes them appealing at small and large scales alike. Amongst the many and broad applications of bilayer materials in engineering and biomedical science, we list drug delivery vesicles \cite{guan2005self,stoychev2011self}, cell encapsulation devices \cite{stoychev2012shape}, sensors \cite{mano2008stimuli} and self-deployable sun sails \cite{love2007demonstration}.    

We model bilayer plates as thin 3d hyper-elastic bodies as depicted in Fig. \ref{fig:thindomain}. Exploiting their relatively small thickness, two dimensional plate models for the mid-plane deformation $\vy(\Omega)$, $\Omega \subset \mathbb R^2$, are derived and analyzed in \cite{schmidt2007minimal,schmidt2007plate}; we also refer to \cite{bartels2017bilayer} for a formal dimension reduction argument. The plates equilibria are characterized as solutions to a nonlinear minimization problem with a nonconvex constraint expressing the plates ability to bend without stretching or shearing. Therefore, distances within the midplane remain unchanged thereby resulting in isometric deformations.  

\begin{figure}[htbp]
	\begin{center}
		\includegraphics[width=8cm]{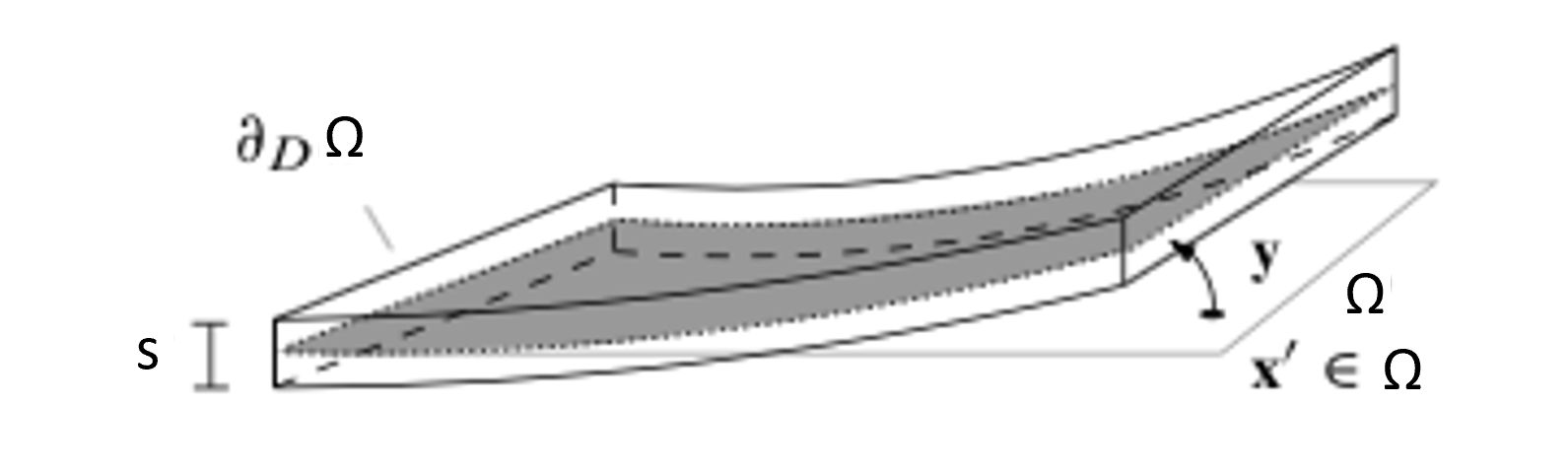}
		\caption{Bilayer plates: $\Omega\times(-s/2,s/2)$, $\Omega\subset\mathbb{R}^2$ is the mid-plane (bounded Lipschitz domain) and $s$ is the thickness parameter. The sets $\Omega\times(-s/2,0)$ and $\Omega\times(0,s/2)$ represent the two undeformed layers of different materials.}
		 \label{fig:thindomain}
	\end{center}
\end{figure}

%--------------------------------------------------------------------------------
\begin{subsection}{Problem statement}
%--------------------------------------------------------------------------------
  
The plate deformation $\vy:\Omega\to\mathbb{R}^3$ must belong to the following {\it admissible set} $\A$, which prevents shearing and stretching within the surface $\vy(\Omega)$ and imposes possible boundary conditions:
\begin{equation}\label{admissible}
\A:=\big\{\mathbf{y}\in[H^2(\Omega)]^3:\quad\I[\vy]=I_2\ \text{ in }\Omega,\quad \mathbf{y}=\vvarphi, \ \nabla\mathbf{y}=\Phi\text{ on }\Gamma^D\big\},
\end{equation}
where $I_2$ is the $2\times2$ identity matrix and
\begin{equation}\label{first-fund-form}
  \I[\vy]:=\nabla\mathbf{y}^T\nabla\mathbf{y}
\end{equation}
is the first fundamental form of $\vy(\Omega)$. We assume that $\Gamma^D\subset\partial\Omega$ is nonempty and open and $\vvarphi\in [H^2(\Omega)]^3$ and $\Phi \in [H^1(\Omega)]^{3\times2}$ are given and are compatible with the isometry constraint, namely $\Phi=\nabla\vvarphi$ and $\Phi^T\Phi=I_2$ on $\Gamma_D$; thus $\A$ is non-empty. Moreover, condition \eqref{first-fund-form} entails that $\{\partial_i\vy\}_{i=1}^2$ is an orthonormal basis of the tangent plane to $\vy(\Omega)$ and its unit normal $\vnu$ can be written as
\begin{equation}\label{normal}
\vnu := \frac{\partial_1\vy\times\partial_2\vy}{|\partial_1\vy\times\partial_2\vy|}=\partial_1\vy\times\partial_2\vy.
\end{equation}

Although we will present simulations in Section \ref{sec:numerical} for both Dirichlet boundary conditions (i.e.  $\Gamma^D\ne\emptyset$) and free boundary conditions (i.e. $\Gamma^D=\emptyset$), we focus our presentation on the former for convenience. We emphasize that the analysis of the latter follows from that in this paper. The modifications are in the spirit of \cite{bonito2020ldg-na}, where we analyze the LDG method for prestrained plates with free boundary conditions. Consequently, we do not include details to avoid repetitions.

Equilibrium configurations of bilayer plates are solutions $\vy\in\A$ of the following constrained minimization problem
\begin{equation}\label{minimization}
\min\limits_{\mathbf{y}\in\A}E\left[\mathbf{y}\right]:= \min\limits_{\mathbf{y}\in\A} \frac{1}{2}\int_{\Omega}\big|\II[\vy]-Z\big|^2,
\end{equation}
where $\II[\vy]$ is the second fundamental form of $\vy(\Omega)$
\begin{equation}\label{second-fund-form}
\II[\vy] := \big( \partial_{ij} \vy \cdot \vnu \big)_{ij=1}^2 = \big( \partial_{ij} \vy \cdot (\partial_1\vy\times\partial_2\vy) \big)_{ij=1}^2,
\end{equation}
and $Z\in[L^{\infty}(\Omega)]^{2\times2}$ is a \emph{spontaneous curvature} which encodes the material properties of the bilayer plates. In fact, $Z$ forces the plate $\vy(\Omega)$ to bend so that $\II[\vy]$ gets as close as possible to $Z$. If the material is homogenous and isotropic, then the spontaneous curvature is diagonal, i.e. $Z=\alpha I_2$ with a constant $\alpha$ depending on the materials parameters. In particular, when the two layers are identical, $Z=\bz$ and the model reduces to a \emph{single layer plate} \cite{bartels2013approximation,bonito2019dg}, which coincides with the classical (nonlinear) Kirchhoff plate theory. 

Thanks to the isometry constraint $\I[\vy] = I_2$, the energy functional $E\left[\mathbf{y}\right]$ can be further simplified. 
Recall that for isometries, there holds \cite{bartels2015numerical}
\begin{equation}\label{identity-isometry}
\big|\II[\vy]\big|^2=\big|D^2\vy\big|^2=\big|\Delta\vy\big|^2=\big(\tr \,\II[\vy]\big)^2,
\end{equation}
whence expanding the square in \eqref{minimization} and using \eqref{second-fund-form} and \eqref{identity-isometry}
yields
\begin{equation}\label{energy-2}
E\left[\mathbf{y}\right] = \frac{1}{2}\int_{\Omega}\big|D^2\vy\big|^2-\sum_{i,j=1}^2\int_{\Omega}\partial_{ij}\vy\cdot(\partial_1\vy\times\partial_2\vy)Z_{ij}+\frac12\int_{\Omega}\big|Z\big|^2.
\end{equation}
Furthermore, since $\frac12\int_{\Omega}\big|Z\big|^2$ does not depend on $\vy$, minimizing the energy in \eqref{energy-2} over $\A$ is equivalent to minimizing the reduced energy
\begin{equation}\label{energy-3}
E\left[\mathbf{y}\right]:=\frac{1}{2}\int_{\Omega}\big|D^2\vy\big|^2-\sum_{i,j=1}^2\int_{\Omega}\partial_{ij}\vy\cdot(\partial_1\vy\times\partial_2\vy)Z_{ij},
\end{equation}
over $\A$; we keep the same notation for the energies in \eqref{energy-2} and \eqref{energy-3} for simplicity. 
The effect of the layers mismatch appears in the cubic term leading to a nonlinear Euler-Lagrange equation for the equilibrium deformation $\vy$, 
namely
\begin{equation}\label{eq:energy-E-L}
\begin{aligned}
0=\delta E\left[\mathbf{y}\right](\vv)&:=\int_{\Omega}D^2\vy:D^2\vv-\sum_{i,j=1}^2\int_{\Omega}\partial_{ij}\vv\cdot(\partial_1\vy\times\partial_2\vy)Z_{ij} \\ 
&-\sum_{i,j=1}^2\int_{\Omega}\partial_{ij}\vy\cdot(\partial_1\vv\times\partial_2\vy)Z_{ij}-\sum_{i,j=1}^2\int_{\Omega}\partial_{ij}\vy\cdot(\partial_1\vy\times\partial_2\vv)Z_{ij},
\end{aligned}
\end{equation}
where $\vv\in[H^2_0(\Omega)]^3$ is an arbitrary test function. 
For later use,
we also introduce a notation for the single layer bending energy
\begin{equation}\label{energy-quad}
B[\vy]:=\frac{1}{2}\int_{\Omega}\big|D^2\vy\big|^2
\end{equation}
and the cubic term
\begin{equation}\label{energy-nonlinear}
  %E^{nl}
C[\vy]:=\sum_{i,j=1}^2\int_{\Omega}\partial_{ij}\vy\cdot(\partial_1\vy\times\partial_2\vy)Z_{ij},
\end{equation}
so that
$$
E[\vy] = B[\vy] - C[\vy].
$$
We emphasize that the cubic term $C$ satisfies 
$$
|C[\vy]| \leq \| \vy \|_{H^2(\Omega)} \| \nabla \vy \|_{L^2(\Omega)}  \| \nabla \vy \|_{L^\infty(\Omega)}\|Z\|_{L^{\infty}(\Omega)}
$$
and  $\I[\vy]=I_2$ implies $\|\nabla\vy\|_{L^\infty(\Omega)} \lesssim \|\I[\vy]\|_{L^\infty(\Omega)} \lesssim 1$, whence
$$
|C[\vy]| \lesssim \| \vy \|_{H^2(\Omega)}^2.
$$
Since the discrete deformation $\vy_h$ is piecewise polynomial, our numerical method cannot guarantee that $\vy_h$ satisfies the isometry constraint $\I[\vy_h]=I_2$ everywhere in $\Omega$. 
We choose to enforce a relaxed constraint solely at the barycenter of elements.
%We choose to enforce a slight violation of this constraint solely at the barycenter of elements, and still retain control of the $\ell^\infty$-norm of $\nabla\vy_h$ at barycenters. 
This is a chief ingredient of our LDG method and is inspired by Bartels and Palus
for Kirchhoff elements \cite{bartels2020stable}.
\end{subsection}

%-------------------------------------------------------------------------------
\begin{subsection}{Previous numerical methods}\label{ss:previous}
%-------------------------------------------------------------------------------
There are several finite element methods available for the numerical simulation of bilayers plates
\cite{bartels2017bilayer,bartels2018modeling,bartels2020stable,bonito2020discontinuous}.
In all of them, the isometry constraint $\I[\vy] = I_2$ is linearized at $\vy$
\begin{equation}\label{linearization}
L[\vv;\vy] := \nabla \vv^T \nabla \vy + \nabla \vy^T \nabla \vv = \bz,
\end{equation}
and tangential variations $\vv$ are evolved within a gradient flow that decreases the energy $E[\vy]$ and is
favored for its robustness. 

The gradients of Kirchhoff finite elements are uniquely defined at the mesh vertices, which is where
\eqref{linearization} is imposed in \cite{bartels2018modeling,bartels2017bilayer}. The discrete gradient flow
in \cite{bartels2017bilayer} treats the cubic energy $C[\vy]$ implicitly to get an energy decreasing scheme
but requires the normalization \eqref{normal} of the discrete normal, which renders the algorithm nonlinear.
Discrete energies are shown to $\Gamma$-converge in \cite{bartels2017bilayer}. In contrast, the scheme of
\cite{bartels2018modeling} is linear and much more efficient, but stability and $\Gamma$-convergence are still
open. Recently, Bartels and Palus \cite{bartels2020stable} reformulated the discretization of $C[\vy]$ making
it fully explicit and the ensuing algorithm linear, and were also able to show an energy decreasing property for
the explicit gradient flow with a mild time-step constraint and $\Gamma$-convergence of the discrete energies.

On the other hand, interior penalty discontinuous Galerkin (IPDG) finite element methods are proposed and studied in \cite{bonito2020discontinuous} because they require a lower polynomial degree (2 instead of 3), are easier to find in existing software platforms, are more flexible in imposing boundary conditions as well as the linearized isometry constraint \eqref{linearization}, and are amenable to subdivisions containing curved boundaries which is crucial to deal with creases. The linearized constraint \eqref{linearization} is enforced in average on all elements of the subdivision. 
Furthermore, the cubic energy $C[\vy]$ is treated explicitly at each step of the discrete gradient flow and the ensuing algorithm is linear. However, $\Gamma$-convergence and energy decreasing properties remain open problems.

We note that the bilayer model \eqref{energy-2} reduces to single layer plates endowed with the bending energy $B[\vy]$ for $\vy \in \A$ provided the upper and lower layers are identical, i.e. $Z=0$. We refer to \cite{bartels2013approximation,bonito2019dg} for the design and analysis of Kirchhoff and IPDG methods in this simpler context.

\end{subsection}

%------------------------------------------------------------------------------
\begin{subsection}{LDG-discretization and our contribution}
%------------------------------------------------------------------------------

We propose a local discontinuous Galerkin (LDG) method for the approximation of the minimization problem \eqref{minimization} along the lines of \cite{bonito2020ldg,bonito2020ldg-na}. LDG method was originally introduced in \cite{cockburn1998local}, and further explored in \cite{bassi1997high,brezzi1999discontinuous,brezzi2000discontinuous,di2010discrete,di2011mathematical}. Our discrete energy $E_h[\vy_h]$ is obtained (up to stabilization terms) by simply replacing the Hessian $D^2\vy$ in \eqref{energy-3} by a discrete Hessian $H_h[\vy_h]$ (defined by \eqref{def:discrHess_bi} below), which is constructed and analyzed in \cite{bonito2020ldg,bonito2020ldg-na} in terms of the discontinuous Galerkin solution $\vy_h$. This is conceptually simpler than IPDG methods, which are based on integration by parts and are harder to design for intricate nonlinear systems. In contrast to IPDG, LDG is also stable for any positive stabilization parameters, and exhibits better convergence properties at the expense of a slightly worse sparsity pattern \cite{bonito2020ldg,bonito2020ldg-na}.

Our treatment of the cubic term hinges on the mid-point quadrature. If $\Th$ is a mesh made of shape-regular triangles or
quadrilaterals $\K$ with barycenter $x_T$, let 
\begin{equation}\label{quadrature}
C_h[\vy_h] := \sum_{i,j=1}^2 \sum_{\K\in\Th} |\K| \Big( \overline{H}_h[\vy_h]_{ij} \cdot (\partial_1\vy_h \times \partial_2\vy_h) Z_{ij}\Big)(x_\K)
\end{equation}
where $\overline{H}_h[\vy_h] = \frac{1}{|\K|}\int_T H_h[\vy_h]$ for all $\K\in\Th$ is the piecewise constant
reduced discrete Hessian. 
We also make the simplifying assumption that the spontaneous curvature $Z$ in \eqref{minimization} is piecewise constant over all partitions $\Th$, $h>0$.
Moreover, we control the isometry defect at barycenters, namely given a
parameter $\delta>0$ we impose
\begin{equation}\label{isometry-defect}
  D_h[\vy_h](x_T) := \big| [\nabla\vy_h^T \nabla\vy_h - I_2](x_T) \big| \le \delta
  \quad\forall \, \K\in\Th.
\end{equation}
We enforce the Dirichlet condition upon augmenting the discrete energy $E_h[\vy_h]$ via a Nitsche method. Therefore, we say that discrete functions satisfying \eqref{isometry-defect} belong to the discrete admissible set $\A_{h,\delta}$, the discrete counterpart of {$\A$} in \eqref{admissible}. We prove that {$\A_{h,\delta}$,} is non-empty, and derive convergence of global minimizers $\vy_h$ of $E_h$ within {$\A_{h,\delta}$,} towards global minimizers $\vy$ of \eqref{minimization} {in the spirit of $\Gamma$-convergence.}

{It is worth pointing out that $\Gamma$-convergence does not give error estimates and that, except for \cite{BaBoTs:23} for linear plates with folding, we are not aware of such bounds for nonlinear plates undergoing large deformations. The main obstructions are: the energy is nonconvex; the isometry constraint is nonconvex; there might be multiple solutions; there is no regularity theory beyond the basic energy estimate $\vy \in [H^2(\Omega)]^3$; mapping properties of the linearized Euler-Lagrange equation and isometry constraint, that govern the behavior of perturbations, have to be discovered and most likely will entail additional regularity of $\vy$; no monotonicity argument is available because $\vy$ is vector-valued. However, it is plausible that error estimates are valid for small perturbations of smooth branches of solutions. Proving error estimates is a challenging and important endeavor, but is far beyond the scope of this paper.
}
  
Solving the {\it nonconvex} discrete minimization counterpart of \eqref{minimization} is a highly nontrivial task. We resort to a discrete gradient flow that enforces the linearized isometry constraint \eqref{linearization} at the barycenters {$x_T$ of elements $T$}
\begin{equation}\label{linearization-discrete}
  L[\vv_h;\vy_h](x_\K) := [\nabla \vv_h^T \nabla \vy_h + \nabla \vy_h^T \nabla \vv_h](x_\K)=\bz
  \quad\forall \, \K\in\Th,
\end{equation}
and solve a discrete minimization problem for a tangential variation $\vv_h$ of $\vy_h$, in the sense \eqref{linearization-discrete}, with the cubic term \eqref{quadrature} treated explicitly. The latter is a clever idea, due to Bartels and Palus \cite{bartels2020stable}, that renders the problem linear at each step of the gradient flow; however, our approach is different from \cite{bartels2020stable}. We show that this procedure is energy decreasing, convergent, and preserves the isometry defect \eqref{isometry-defect} provided $\delta$ is
proportional to $h$, which entails a linear relation between the time step $\tau$ of the gradient flow and $h$.
Moreover, we derive a (suboptimal) discrete inf-sup condition for the Lagrange multiplier approach to the linear constraint \eqref{linearization-discrete}, which seems to be the first such result for this type of matrix constraint and is consistent with computations.

The rest of this article is organized as follows. Section \ref{sec:Discretization} is about LDG.
We introduce the (\emph{broken}) finite element spaces in Section \ref{subsec:broken}. We examine the discrete Hessian operator and its reduced counterpart in Subsection \ref{subsec: DH}, together with their boundedness and convergence properties. In Subsections \ref{subsec: D-admissible} and \ref{s:discrete_min}, we define the discrete problem and investigate consistency of the cubic discrete energy $C_h$. The proof of $\Gamma$-convergence of the discrete energy to the exact one is the content of Section \ref{Gamma-bilayer}, and its extension to a bilayer model with piecewise quadratic creases is included in Section \ref{S:creases}. In Section \ref{sec: gf-bilayer}, we introduce the gradient flow scheme used to solve the discrete problem, prove its conditional stability and show how the constraint violation \eqref{isometry-defect} is controlled throughout the flow. Moreover, we derive a suboptimal inf-sup condition for \eqref{linearization-discrete} at each step of the flow. We present several insightful simulations in Section \ref{sec:numerical} to illustrate the performance of LDG, including folding across a curved crease. 
\end{subsection}

%%%%%%%%%%%%%%%%%%%%%%%%%%%%%%%%%%%%%%%%%%%%%%%%%%%%%%%%%%%%%%%%%%%%%%%%%%%%%%%%%
\section{LDG Discretization}\label{sec:Discretization}
% space, discrete Hessian, discrete energy
%%%%%%%%%%%%%%%%%%%%%%%%%%%%%%%%%%%%%%%%%%%%%%%%%%%%%%%%%%%%%%%%%%%%%%%%%%%%%%%%%

%--------------------------------------------------------------------------------
\begin{subsection}{Subdivisions} \label{subsec:mesh}
%--------------------------------------------------------------------------------
From now on, we assume that $\Omega \subset \mathbb R^2$ is a polygonal domain and denote by $\{\Th\}_{h>0}$ a shape-regular sequence of conforming partitions of $\Omega$ made of either triangles or quadrilaterals $\K$ with diameter $h_{\K} := \textrm{diam}(\K)\leq h$.
The set of edges $\Eh:=\Eh^0\cup\Eh^b$ is decomposed into the interior edges $\Eh^0$ and boundary edges $\Eh^{b}$. 
For $e \in \E_h$, we define $h_e:= \textrm{diam}(e)$ and note that $h_e\le h$, and thus
\begin{equation} \label{eqn:he_h}
h^{-1}\le h_e^{-1} \quad \forall e\in\Eh.
\end{equation}
We assume a compatible representation of the Dirichlet boundary $\Gamma^D= \cup \{e: e\in \Eh^{D}\}$,
and let $\Eh^a:=\Eh^0\cup\Eh^{D}$ be the set of {\it active edges} on which jumps and averages will be computed. The union of these edges gives rise to the corresponding skeletons of $\Th$
\begin{equation}\label{E:skeleton}
\Gh^0 := \cup \big\{e: e\in\Eh^0 \big\},
\quad
\Gh^D := \Gamma^D,
\quad
\Gh^a := \Gh^0 \cup \Gh^D.
\end{equation}
We use the notation $(\cdot,\cdot)_{L^2(\Omega)}$ and $(\cdot,\cdot)_{L^2(\Gh^a)}$ to denote the $L^2$ inner products over $\Omega$ and $\Gh^a$, and a similar notation for subsets of  $\Omega$ and $\Gh^a$. We denote by $\h$ a mesh density function, locally equivalent to $h_T$ and $h_e$, and utilize it as a weight in the preceding norms. 
We often write $f \lesssim g$ to indicate that there exists a constant $C$ independent of discretization parameters such that $f \leq C g$. 
%Finally, we make the simplifying assumption that the spontaneous curvature $Z$ in \eqref{minimization} is piecewise constant over all partitions $\Th$, $h>0$.

\end{subsection}

%------------------------------------------------------------------------------
\begin{subsection}{Broken spaces and operators}\label{subsec:broken}
%------------------------------------------------------------------------------

For an integer $r\geq 0$, we denote by $\mathbb{P}_r$  the space of polynomials of total degree at most $r$ when the subdivision is made of triangles and by $\mathbb{Q}_r$ the space of polynomials of degree at most $r$ in each variable when quadrilaterals are used.
We also use the same notation, $\widehat\K$, to denote either the unit triangle or the unite square depending on the type of subdivision used.
We let $F_\K:\widehat\K \rightarrow \K \in [\mathbb{Q}_1]^2$ be the generic map from the reference element to the physical element. 
It is affine only when the subdivision is made of triangles.

We fix {the polynomial degree} $k \ge 2$. The (\textit{broken}) finite element space $\V_h^k$ to approximate each component of the deformation $\vy$ reads
\begin{equation} \label{def:Vhk_tri}
\V_h^k:=\left\{v_h\in L^2(\Omega): \,\, \restriction{v_h}{\K}\circ F_{\K}\in\mathbb{Q}_k \quad \forall \K \in\Th \right\},
\end{equation}
when the subdivision is made of quadrilaterals, and we replace $\mathbb{Q}_k$ by $\mathbb{P}_k$ if we have triangular elements.
We define the broken gradient $\nabla_h v_h$ of $v_h\in\V_h^k$ to be the elementwise gradient, and use similar notation for other differential operators.
For instance $D_h^2 v_h=\nabla_h\nabla_h v_h$ stands for the broken Hessian, and $\partial_i v_h:= \partial_{i,h} v_h$
denotes the components $i=1,2$ of the broken gradient $\nabla_h v_h$.

We now introduce the jump and average operators. For every $e\in\Eh^0$, fix $\vn_e$ to be one of the two unit normals to $e$ (the choice is arbitrary but does not affect the formulation). For a boundary edge $e\in\Eh^b$, we set $\vn_e = \vn$, the outward unit normal vector to $\partial\Omega$. The jump of $v_h \in \V_h^k$ and $\nabla_h v_h$ across $e \in \E_h^0$ are given by
\begin{equation} \label{def:jump}
\restriction{\jump{v_h}}{e} := v_h^{-}-v_h^+,
\quad
\restriction{\jump{\nabla_h v_h}}{e} := \nabla_h v_h^{-} - \nabla_h v_h^+,
\quad
\end{equation}
where $v_h^{\pm}(\vx):=\lim_{s\rightarrow 0^+}v_h(\vx\pm s\vn_e)$ for $\vx \in e$. The jumps of a vector or matrix valued function are computed componentwise.

In order to incorporate the Dirichlet boundary conditions $\vy = \vvarphi$, $\nabla \vy = \Phi$ on $\Gamma^D$, we resort to a Nitsche's approach which does not impose essential restrictions on the discrete space $[\V_h^k]^3$ but rather modifies the discrete formulation by including boundary jumps defined for $\vv_h \in [\V_h^k]^3$
\begin{equation}\label{E:bd-jumps}
\restriction{[\vv_h]}{e}:=\restriction{[\vv_h]}{e}(\vvarphi) := \vv_h - \vvarphi,
\quad
\restriction{[\nabla_h \vv_h]}{e}  :=\restriction{[\nabla_h \vv_h]}{e}(\Phi):=\nabla_h \vv_h - \Phi,
\end{equation}
for all $e\in\Eh^D$.
However, to simplify the notation, it is convenient to introduce the discrete set $\V_h^k(\vvarphi,\Phi)$
\begin{equation}\label{discrete-set}
\V_h^k (\vvarphi,\Phi) := \Big\{ \vv_h\in [\V_h^k]^3: \
\restriction{[\vv_h]}{e}, \, \restriction{[\nabla_h \vv_h]}{e} \text{ given by \eqref{E:bd-jumps} for all } e\in\Eh^D \Big\},
\end{equation}
which coincide with $[\V_h^k]^3$ but carries the notion of boundary jump \eqref{E:bd-jumps} for its elements.
We define the {\it average} of $v_h \in \V_h^k$ across an edge $e\in \Eh$ as 
\begin{equation} \label{def:avrg}
\restriction{\avrg{v_h}}{e} := 
\left\{\begin{array}{ll}
\frac{1}{2}(v_h^{+}+v_h^{-}) & e\in\Eh^0 \\
v_h^{-} & e\in\Eh^b,
\end{array}\right. 
\end{equation}
and apply \eqref{def:avrg} componentwise to vector and matrix-valued functions.
\end{subsection}

We let $\langle\cdot,\cdot\rangle_{H_h^2(\Omega)}$ be the following mesh-dependent form defined, for any $\vv_h,\vw_h\in \V_h^k(\vvarphi,\Phi)$, by
\begin{equation}\label{def:H2norm}
\begin{aligned}
  \langle\vv_h,\vw_h\rangle_{H_h^2(\Omega)} & :=  (D^2_h\vv_h,D^2_h\vw_h)_{L^2(\Omega)}
  \\& +(\h^{-1}\jump{\nabla_h\vv_h},\jump{\nabla_h\vw_h})_{L^2(\Gh^a)}
  +(\h^{-3}\jump{\vv_h},\jump{\vw_h})_{L^2(\Gh^a)}.
\end{aligned}
\end{equation}
We emphasize that \eqref{def:H2norm} is not bilinear in $\V_h^k(\vvarphi,\Phi)$ because of the presence of $(\vvarphi,\Phi)$ in the boundary jump terms, unless $\vvarphi=\mathbf{0},\Phi=\mathbf{0}$. Moreover, we set
\begin{equation}\label{def:realH2norm}
  \|\vv_h\|_{H_h^2(\Omega)}^2:=\langle\vv_h,\vv_h\rangle_{H_h^2(\Omega)}
  \quad\forall \, \vv_h \in \V_h^k(\vvarphi,\Phi),
\end{equation}
and observe the validity of the following Friedrichs-type inequality
\cite[(2.27)]{bonito2019dg}
\begin{equation}\label{friedrichs}
  \|\vv_h\|_{L^2(\Omega)} + \|\nabla_h\vv_h\|_{L^2(\Omega)} \lesssim \|\vv_h\|_{H_h^2(\Omega)}
  + \|\vvarphi\|_{H^1(\Omega)} + \|\Phi\|_{H^1(\Omega)}
  \quad\forall \, \vv_h \in \V_h^k(\vvarphi,\Phi).
\end{equation}
Once restricted to $\V_h^k(\mathbf{0},\mathbf{0})$, the form $\langle\cdot,\cdot\rangle_{H_h^2(\Omega)}$
turns out to be a scalar product, according to \eqref{friedrichs},
which corresponds to the discrete counterpart of $\langle\cdot,\cdot\rangle_{H^2(\Omega)}$.

%------------------------------------------------------------------------------
\begin{subsection}{Discrete Hessians}\label{subsec: DH}
%------------------------------------------------------------------------------
The central ingredient in the proposed LDG approximation is the reconstructed Hessian $H_h[\vy_h]\in\left[L^2(\Omega)\right]^{3\times 2\times 2}$ defined in \cite{bonito2020ldg,bonito2020ldg-na}. 
Let $l_1,l_2\ge0$ be integers and consider two {\it local lifting operators} $r_e:[L^2(e)]^2\rightarrow[\V_h^{l_1}]^{2\times 2}$ and $b_e:L^2(e)\rightarrow[\V_h^{l_2}]^{2\times 2}$ defined for $e\in\Eh^a$ by
% \V_h^{l_1}(\omega_e)   \V_h^{l_2}(\omega_e)
%
\begin{gather} 
r_e(\vphi) \in [\V_h^{l_1}]^{2\times 2}: \,
\int_{\omega_e}r_e(\vphi):\tau_h = \int_e\avrg{\tau_h}\vn_e\cdot\vphi \quad \forall \tau_h\in [\V_h^{l_1}]^{2\times 2}\label{def:lift_re},
\\
b_e(\phi) \in [\V_h^{l_2}]^{2\times 2}: \,
\int_{\omega_e} b_e(\phi):\tau_h = \int_e\avrg{\di \tau_h}\cdot\vn_e\phi \quad \forall \tau_h\in [\V_h^{l_2}]^{2\times 2} \label{def:lift_be} \, ,
\end{gather}
where $\omega_e$ is the union of the two elements of $\Th$ sharing $e\in\Gh^0$ or the element of $\Th$ having $e\in\Gh^b$ as part of its boundary. The definitions extend to $\left[[L^2(e)]^2\right]^3$ and $[L^2(e)]^3$ by component-wise application. The corresponding {\it global lifting operators} are then given by
\begin{equation}\label{E:global-lifting}
\begin{split}
\mathcal{R}_h &:= \sum_{e\in\Eh^a} r_e : [L^2(\Gh^a)]^2 \rightarrow [\V_h^{l_1}]^{2\times 2}, \\
\mathcal{B}_h &:= \sum_{e\in\Eh^a} b_e : L^2(\Gh^a) \rightarrow [\V_h^{l_2}]^{2\times 2}.
\end{split}
\end{equation}

Their purpose is to lift inter-element information to the cells so that once added to the piecewise Hessian $D^2_h$, they constitute a weakly convergent approximation of the exact Hessian (see Lemma \ref{weak-conv}).
In fact, we define the discrete Hessian operator $H_h :\V_h^k(\vvarphi,\Phi)\rightarrow\left[L^2(\Omega)\right]^{3\times 2\times 2}$ by
\begin{equation} \label{def:discrHess_bi}
H_h[\vv_h] :=  D_h^2 \vv_h - \mathcal{R}_h(\jump{\nabla_h\vv_h}) + \mathcal{B}_h(\jump{\vv_h}).
\end{equation}
We point out the implicit dependence on data $(\vvarphi,\Phi)$ and that we will later compute $H_h[\vv_h]$ for $\vv_h\in \V_h^k(\bz,\bz)$, i.e. $\vvarphi = \bz$, $\Phi=\bz$, slightly abusing notation.
Thanks to the relation between the edge and cell diameter \eqref{eqn:he_h}, we have the following a priori upper bounds for lifting operators 
\begin{equation}\label{eqn:hessian_upper}
\|H_h[\vv_h]\|_{L^2(\Omega)}\lesssim ||\vv_h||_{H_h^2(\Omega)}.
\end{equation}
Moreover, we have the following properties of the discrete Hessian $H_h[\vv_h]$. 

\begin{lemma}[weak convergence of $H_h$]\label{weak-conv}
Let $k \geq 2$ and $\vv_h\in\V_h^k(\vvarphi,\Phi)$. If $||\vv_h||_{H_h^2(\Omega)}\lesssim 1$ and $\vv_h\to\vv\in[H^2(\Omega)]^3$ in $[L^2(\Omega)]^3$ as $h\rightarrow 0$, then for any polynomial degree $l_1,l_2\ge0$ we have
\begin{equation} \label{eqn:weakH}
H_h[\vv_h]\rightharpoonup D^2 \vv \quad \mbox{in } \left[L^2(\Omega)\right]^{3\times 2\times 2} \quad \mbox{as } h\rightarrow 0.
\end{equation} 
\end{lemma}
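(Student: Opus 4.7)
My approach is duality testing against smooth functions, exploiting the a priori bound \eqref{eqn:hessian_upper}, element-wise integration by parts, and the defining identities \eqref{def:lift_re}--\eqref{def:lift_be} of the lifting operators. The hypothesis $\|\vv_h\|_{H_h^2(\Omega)} \lesssim 1$ and \eqref{eqn:hessian_upper} yield $\|H_h[\vv_h]\|_{L^2(\Omega)} \lesssim 1$, so the family $\{H_h[\vv_h]\}$ is weakly precompact in $[L^2(\Omega)]^{3 \times 2 \times 2}$. It therefore suffices to show that for every $\tau \in [C_c^\infty(\Omega)]^{3 \times 2 \times 2}$ one has $\int_\Omega H_h[\vv_h]:\tau \to \int_\Omega D^2\vv:\tau$, since uniqueness of the weak limit then upgrades subsequential convergence to convergence of the full sequence.

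Fix such a $\tau$ and let $\tau_h \in [\V_h^{\max(l_1,l_2)}]^{3\times 2\times 2}$ be the componentwise $L^2$ projection of $\tau$. The contribution $\int_\Omega H_h[\vv_h]:(\tau-\tau_h)$ is $o(1)$ by the $L^2$-boundedness of $H_h[\vv_h]$ and the $L^2$-convergence $\tau_h \to \tau$, so $\tau_h$ may replace $\tau$ inside the lifting contributions without loss. Integrating $\int_\Omega D_h^2\vv_h:\tau$ by parts twice on each element--using the single-valuedness of $\tau$ across interior edges and the compact support of $\tau$ in $\Omega$ (so boundary-of-$\Omega$ contributions vanish for $h$ small)--yields
$$
\int_\Omega D_h^2\vv_h:\tau = \int_\Omega \vv_h \cdot \di\di\tau + \sum_{e\in\E_h^0} \int_e \bigl( \jump{\nabla_h\vv_h}:\tau\vn_e - \jump{\vv_h}\cdot \di\tau\,\vn_e \bigr).
$$
Applying \eqref{def:lift_re}--\eqref{def:lift_be} with test $\tau_h$ produces the same edge structure but with $\tau$ and $\di\tau$ replaced by $\avrg{\tau_h}$ and $\avrg{\di\tau_h}$. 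Combining these,
$$
\int_\Omega H_h[\vv_h]:\tau = \int_\Omega \vv_h \cdot \di\di\tau + R_h^{(1)} + R_h^{(2)} + o(1),
$$
with residuals $R_h^{(1)} := \sum_e \int_e \jump{\nabla_h\vv_h}:(\tau-\avrg{\tau_h})\vn_e$ and $R_h^{(2)} := -\sum_e \int_e \jump{\vv_h}\cdot(\di\tau-\avrg{\di\tau_h})\vn_e$ quantifying the mismatch between the exact test and its polynomial surrogate; Dirichlet-edge contributions drop out for $h$ small because $\tau_h$ vanishes near $\Gamma^D$.

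The volume term passes to the limit using $\vv_h\to\vv$ in $L^2$ and a final integration by parts, valid for $\vv\in H^2(\Omega)$ and $\tau\in C_c^\infty(\Omega)$, yielding $\int_\Omega \vv_h\cdot\di\di\tau \to \int_\Omega D^2\vv:\tau$. The residuals are bounded by Cauchy--Schwarz against the weighted norms in \eqref{def:H2norm}:
$$
|R_h^{(1)}| \le \|\h^{-1/2}\jump{\nabla_h\vv_h}\|_{L^2(\Gh^a)}\, \|\h^{1/2}(\tau-\avrg{\tau_h})\|_{L^2(\Gh^a)} \lesssim h\,\|\vv_h\|_{H_h^2(\Omega)},
$$
and analogously $|R_h^{(2)}|\lesssim h\,\|\vv_h\|_{H_h^2(\Omega)}$ via the $\h^{-3/2}$/$\h^{3/2}$ pairing; the $O(h)$ decay comes from standard trace and interpolation estimates for the smooth $\tau$ and survives even in the worst admissible case $l_1=l_2=0$, where $\avrg{\di\tau_h}\equiv 0$ and the bound is applied directly to $\di\tau$. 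I expect the main technical obstacle to be the edge bookkeeping in the second step: carefully matching the jump terms produced by the double integration by parts with the outputs of the liftings, tracking the sign and orientation conventions attached to $\vn_e$, and verifying that Dirichlet-edge contributions--whose boundary jumps $[\vv_h]$ and $[\nabla_h\vv_h]$ depend on $(\vvarphi,\Phi)$ via \eqref{E:bd-jumps}--genuinely vanish once $\tau$ is taken compactly supported.
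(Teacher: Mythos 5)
Your argument is correct and is precisely the standard LDG route that the paper defers to (it simply cites \cite[Lemma~2.4 and Appendix~B]{bonito2020ldg-na} without reproducing the proof): uniform $L^2$ boundedness from \eqref{eqn:hessian_upper} gives weak precompactness, so it suffices to identify the limit against $\tau\in[C_c^\infty(\Omega)]^{3\times 2\times 2}$; element-wise double integration by parts on $\int_\Omega D^2_h\vv_h:\tau$ produces volume and edge terms; the lifting definitions \eqref{def:lift_re}--\eqref{def:lift_be} are designed exactly so that the edge terms cancel up to polynomial-approximation residuals, which the weighted norms in \eqref{def:H2norm} control with an extra factor of $h$; and the volume term converges by $\vv_h\to\vv$ in $L^2$ and a final integration by parts. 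Your bookkeeping of the jump and orientation conventions and of the Dirichlet-edge contributions is sound: since $\tau$ is compactly supported, for $h$ small both the $\partial\Omega$ boundary terms and the $\omega_e$-supported lifting contributions for $e\in\Eh^D$ vanish, so the dependence on $(\vvarphi,\Phi)$ never enters.

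One small imprecision worth repairing: a single $L^2$ projection $\tau_h$ into $[\V_h^{\max(l_1,l_2)}]^{3\times 2\times 2}$ is not an admissible test function for both defining identities when $l_1\neq l_2$, since \eqref{def:lift_re} holds only against degree-$l_1$ tests and \eqref{def:lift_be} only against degree-$l_2$ tests. The clean version is to exploit that $\mathcal{R}_h(\cdot)$ and $\mathcal{B}_h(\cdot)$ are themselves piecewise polynomials of the respective degrees, so that $\int_\Omega \mathcal{R}_h(\cdot):\tau=\int_\Omega \mathcal{R}_h(\cdot):\Pi_{l_1}\tau$ and $\int_\Omega \mathcal{B}_h(\cdot):\tau=\int_\Omega \mathcal{B}_h(\cdot):\Pi_{l_2}\tau$, and then apply the defining identities directly with $\Pi_{l_1}\tau$ and $\Pi_{l_2}\tau$. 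The residuals then read $\tau-\avrg{\Pi_{l_1}\tau}$ and $\di\tau-\avrg{\di\Pi_{l_2}\tau}$, and your weighted estimates -- including the degenerate case $l_1=l_2=0$, where $\di(\Pi_{0}\tau)\equiv 0$ -- go through unchanged. With that adjustment the proof is complete.
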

\begin{proof}  
See \cite[Lemma~2.4 and Appendix B]{bonito2020ldg-na}.
\end{proof}

\begin{lemma}[strong convergence of $H_h$]\label{strong-conv}
Let $\vv\in[H^2(\Omega)]^3$ be any function such that $\vv=\vvarphi$ and $\nabla\vv=\Phi$ on $\Gamma^D$. Moreover,  let $\vv_h\in\V^k_h(\vvarphi,\Phi)$ satisfy
\begin{equation}\label{eqn:strong-cond}
  \|D^2\vv_h\|_{L^2(T)}\lesssim\|\vv\|_{H^2(T)}\,\,\forall T\in\Th,
  \quad
  \sum_{T\in\Th}\|\vv_h-\vv\|_{H^2(T)}^2\to0\text{ as }h\to0^+.
\end{equation}
Then for any polynomial degree $l_1,l_2\ge 0$ we have as $h\rightarrow 0^+$
\begin{equation} \label{eqn:H_strong}
H_h[\vv_h]\to D^2\vv \quad \mbox{strongly in } \,\, [L^2(\Omega)]^{3\times 2\times 2}.
\end{equation} 	
\end{lemma}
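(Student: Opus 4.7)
The plan is to use the LDG decomposition \eqref{def:discrHess_bi} and split
\[
H_h[\vv_h]-D^2\vv \;=\; \bigl(D_h^2\vv_h - D^2\vv\bigr) \;-\; \mathcal{R}_h\bigl(\jump{\nabla_h\vv_h}\bigr) \;+\; \mathcal{B}_h\bigl(\jump{\vv_h}\bigr),
\]
and then to show that each of the three pieces converges to $\bz$ strongly in $[L^2(\Omega)]^{3\times 2\times 2}$. The broken-Hessian piece is immediate from \eqref{eqn:strong-cond} since
$\|D_h^2\vv_h - D^2\vv\|_{L^2(\Omega)}^2 \le \sum_{T\in\Th}\|\vv_h-\vv\|_{H^2(T)}^2 \to 0$.

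For the two lifting terms, I invoke the standard local $L^2$-stability bounds
\[
\|\mathcal{R}_h(\phi)\|_{L^2(\Omega)}^2 \lesssim \sum_{e\in\Eh^a} h_e^{-1}\|\phi\|_{L^2(e)}^2,
\qquad
\|\mathcal{B}_h(\psi)\|_{L^2(\Omega)}^2 \lesssim \sum_{e\in\Eh^a} h_e^{-3}\|\psi\|_{L^2(e)}^2,
\]
which follow by testing \eqref{def:lift_re}--\eqref{def:lift_be} with $\tau_h = r_e(\phi)$ and $\tau_h=b_e(\psi)$ respectively, combined with the inverse trace inequality on $\V_h^{l_i}$ and \eqref{eqn:he_h}. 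It therefore suffices to establish
\[
J_1 := \sum_{e\in\Eh^a} h_e^{-1}\|\jump{\nabla_h\vv_h}\|_{L^2(e)}^2 \to 0,
\qquad
J_0 := \sum_{e\in\Eh^a} h_e^{-3}\|\jump{\vv_h}\|_{L^2(e)}^2 \to 0.
\]

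The key observation is that $\vv\in [H^2(\Omega)]^3$ has single-valued traces of $\vv$ and $\nabla \vv$ across every interior edge, and satisfies $\vv = \vvarphi$, $\nabla\vv = \Phi$ on $\Gamma^D$; combined with the boundary-jump convention \eqref{E:bd-jumps}, this yields $\jump{\vv} = \bz$ and $\jump{\nabla\vv} = \bz$ on all of $\Eh^a$. Consequently, $\jump{\nabla_h\vv_h} = \jump{\nabla_h(\vv_h - \vv)}$ and $\jump{\vv_h} = \jump{\vv_h - \vv}$. Applying the scaled trace inequality on each element $T$ to $\vv_h - \vv \in [H^2(T)]^3$ and summing produces
\[
J_1 \lesssim \sum_{T\in\Th}\Bigl(h_T^{-2}\|\nabla(\vv_h-\vv)\|_{L^2(T)}^2 + \|D^2(\vv_h-\vv)\|_{L^2(T)}^2\Bigr),
\]
with an analogous expression for $J_0$ carrying an extra $\sum_T h_T^{-4}\|\vv_h-\vv\|_{L^2(T)}^2$. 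The $H^2$-seminorm contribution vanishes directly from \eqref{eqn:strong-cond}.

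The main obstacle is to show that the lower-order, $h_T$-weighted sums also vanish, since these scalings are not furnished outright by $\sum_T\|\vv_h-\vv\|_{H^2(T)}^2\to 0$. My plan is to dispose of them through a density-with-interpolation argument: approximate $\vv$ by a smooth, boundary-compatible $\vw^\varepsilon\in [C^\infty(\overline\Omega)]^3$ with $\|\vv - \vw^\varepsilon\|_{H^2(\Omega)} < \varepsilon$ and insert the Lagrange interpolant $I_h\vw^\varepsilon\in \V_h^k(\vvarphi,\Phi)$ inside the jump; because $\vv$ and $\vw^\varepsilon$ are both $H^2$-conforming, the replacement $\vv\leftrightarrow\vw^\varepsilon$ is exact on active edges. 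The smooth interpolation error $\vw^\varepsilon - I_h\vw^\varepsilon$ enjoys optimal rates $h_T^{k-j}\|\vw^\varepsilon\|_{H^{k+1}(T)}$ for $j=0,1,2$ (valid since $k\ge 2$), which beat the negative powers of $h_T$; the polynomial remainder $\vv_h - I_h\vw^\varepsilon$ is controlled by polynomial inverse inequalities and the a priori bound \eqref{eqn:strong-cond}$_1$ together with the $H^2$-convergence \eqref{eqn:strong-cond}$_2$; and the residual $\vv - \vw^\varepsilon$ is made negligible by letting $h\to 0$ first and $\varepsilon\to 0$ afterwards. The same programme, with one additional negative power of $h$ absorbed, handles $J_0$. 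This is the main technical step and essentially reproduces the argument in \cite[Appendix B]{bonito2020ldg-na}.
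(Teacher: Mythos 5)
Your overall plan — decompose $H_h[\vv_h]-D^2\vv$ into the broken-Hessian part plus the two lifting corrections, control the liftings by their $L^2$-stability, reduce to showing the $h$-weighted jump sums $J_0,J_1$ vanish, and then invoke a density-plus-interpolation argument — is the natural route and is essentially the structure of the reference cited by the paper (whose own ``proof'' is just that citation, for the special case $\vv_h = I_h\vv$). The first piece, the reduction to $J_0,J_1$, and the observation that $\jump{\vv}=\jump{\nabla\vv}=\bz$ on all active edges are all correct.

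The gap is in the key step, the control of the ``polynomial remainder'' $\vv_h-I_h\vw^\varepsilon$. After the polynomial trace inequality, the quantity to control is $\sum_{T\in\Th} h_T^{-2}\|\nabla_h(\vv_h-I_h\vw^\varepsilon)\|_{L^2(T)}^2$ (and its analogue with an extra $h_T^{-2}$ factor for $J_0$). You claim this is handled by ``polynomial inverse inequalities and the a priori bound \eqref{eqn:strong-cond}$_1$ together with the $H^2$-convergence \eqref{eqn:strong-cond}$_2$.'' It is not: inverse inequalities on polynomials run the wrong way (they convert lower-order to higher-order norms at the cost of a further negative power of $h_T$, so they cannot trade $h_T^{-2}\|\nabla p\|_{L^2(T)}^2$ for something bounded by $\|D^2 p\|_{L^2(T)}^2$), and \eqref{eqn:strong-cond}$_2$ supplies only the \emph{unweighted} decay $\sum_T\|\vv_h-\vv\|_{H^2(T)}^2\to0$, not the $h_T^{-2}$-weighted version. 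Splitting $\vv_h-I_h\vw^\varepsilon = (\vv_h-\vv)+(\vv-\vw^\varepsilon)+(\vw^\varepsilon-I_h\vw^\varepsilon)$ does not repair this: the first two summands still carry the uncontrolled $h_T^{-2}$ weight, and in particular $\sum_T h_T^{-2}\|\nabla(\vv-\vw^\varepsilon)\|_{L^2(T)}^2 \ge h^{-2}\|\nabla(\vv-\vw^\varepsilon)\|_{L^2(\Omega)}^2$, which blows up as $h\to0$ for fixed $\varepsilon>0$ — so ``$h\to0$ first, then $\varepsilon\to0$'' does not close this term. The only summand that tolerates the $h_T^{-2}$ weight is $\vw^\varepsilon-I_h\vw^\varepsilon$, and only because $I_h$ reproduces $\mathbb P_k$ (Bramble--Hilbert). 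That orthogonality is exactly what powers the Lagrange-interpolant case cited by the paper and it is not inherited by a generic $\vv_h$ satisfying only \eqref{eqn:strong-cond}. To make the argument watertight one must exploit a finer structural property of $\vv_h$ — for instance that the recovery operator $R_h$ in \eqref{recovery}, for which this lemma is actually invoked in Theorem~\ref{Gamma:E_h}(iii), reproduces affine functions on each $T$ and hence has its own Bramble--Hilbert structure — or else explicitly strengthen hypothesis \eqref{eqn:strong-cond} to its $h^{-2}$-weighted analogue.

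A secondary point: the existence of a $C^\infty(\overline\Omega)$ approximant $\vw^\varepsilon$ that exactly satisfies $\vw^\varepsilon=\vvarphi$ and $\nabla\vw^\varepsilon=\Phi$ on $\Gamma^D$ with $\|\vv-\vw^\varepsilon\|_{H^2(\Omega)}<\varepsilon$ is asserted but not justified; without it, the trace of $\nabla(\vw^\varepsilon-\vv)$ on $\Gamma^D$ enters $J_1$ with an $h^{-1}$ weight that is likewise unbounded. One should either argue for such a compatible smoother or handle the boundary edges by a separate mechanism.
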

\begin{proof}
This is a minor modification of \cite[Lemma~2.5 and Appendix B]{bonito2020ldg-na}, which {assumes} that
$\vv_h$ is the Lagrange interpolant of $\vv\in H^2(\Omega)$.
\end{proof}

For later use, we now discuss properties of the \emph{reduced discrete Hessian} defined {as} the  local $L^2$ projection onto the space of piecewise constants, i.e.
\begin{equation}\label{modified-hessian}
  \overline{H}_h[\vv_h]|_T:= \frac{1}{|T|}\int_T H_h[\vv_h]
  \quad\forall \, T\in \Th.
\end{equation}
We start with the stability of $\overline{H}_h[\vy_h]$.
\begin{lemma}[{stability of $\overline{H}_h[\vv_h]$}]\label{L2bound-modified}
For any $\vv_h\in\V_h^k(\vvarphi,\Phi)$, there holds
\begin{equation}\label{e:L2boundreduce}
  \|\overline{H}_h[\vv_h]\|_{L^2(\Omega)}\le c_{stab}\|\vv_h\|_{H_h^2(\Omega)},
\end{equation}
where the constant $c_{stab}$ is independent of $h$.
\end{lemma}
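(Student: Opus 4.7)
The proof is short and hinges on two observations: that the elementwise mean is an $L^2$-contraction, and that the full discrete Hessian $H_h[\vv_h]$ has already been controlled by the mesh-dependent $H_h^2$ norm via \eqref{eqn:hessian_upper}.

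The plan is to exploit the fact that, on each element $T$, $\overline{H}_h[\vv_h]|_T$ is the $L^2(T)$ projection of $H_h[\vv_h]|_T$ onto the (three-index) space of constants on $T$. Since orthogonal projections in Hilbert space are non-expansive, or equivalently by a direct application of the Cauchy--Schwarz (or Jensen) inequality, one gets
\begin{equation*}
\bigl|\overline{H}_h[\vv_h](x)\bigr|^2 = \bigg|\frac{1}{|T|}\int_T H_h[\vv_h]\bigg|^2 \le \frac{1}{|T|}\int_T \bigl|H_h[\vv_h]\bigr|^2 \quad \forall \, x \in T,
\end{equation*}
and integrating over $T$ yields $\|\overline{H}_h[\vv_h]\|_{L^2(T)}^2 \le \|H_h[\vv_h]\|_{L^2(T)}^2$. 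Summing over all $T\in\Th$ gives $\|\overline{H}_h[\vv_h]\|_{L^2(\Omega)} \le \|H_h[\vv_h]\|_{L^2(\Omega)}$.

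Finally I would invoke the already recorded a priori bound \eqref{eqn:hessian_upper}, namely $\|H_h[\vv_h]\|_{L^2(\Omega)} \lesssim \|\vv_h\|_{H_h^2(\Omega)}$, to conclude $\|\overline{H}_h[\vv_h]\|_{L^2(\Omega)} \le c_{stab} \|\vv_h\|_{H_h^2(\Omega)}$ with a constant $c_{stab}$ depending only on the shape regularity of $\{\Th\}_{h>0}$ and on the lifting operators through \eqref{eqn:hessian_upper}. I do not see any genuine obstacle here: the result is essentially a corollary of \eqref{eqn:hessian_upper} together with the contractivity of the local averaging operator. The only minor point worth noting is that the bound \eqref{eqn:hessian_upper} is available for every $\vv_h \in \V_h^k(\vvarphi,\Phi)$ regardless of the boundary data $(\vvarphi,\Phi)$, so the argument applies verbatim and $c_{stab}$ is independent of $(\vvarphi,\Phi)$ and of $h$.
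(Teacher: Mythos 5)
Your argument is correct and coincides with the paper's own proof, which likewise combines the $L^2$-contractivity of the local projection onto piecewise constants with the a priori bound \eqref{eqn:hessian_upper} on $H_h[\vv_h]$. The paper states this in one sentence, while you spell out the Jensen/Cauchy--Schwarz step explicitly, but the route is identical.
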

\begin{proof}
This result is a direct consequence of the stability of the reconstructed Hessian \eqref{eqn:hessian_upper} and the local $L^2$ projection \eqref{modified-hessian}.
\end{proof}

The reduced discrete Hessian is also weakly converging.
\begin{lemma}[{weak convergence of $\overline{H}_h[\vv_h]$}]\label{weakconv-modified}
Let $\vv_h\in\V_h^k(\vvarphi,\Phi)$ be a sequence of discrete deformations satisfying  $\|\vv_h\|_{H_h^2(\Omega)}\lesssim 1$ for all $h$ and such that $\vv_h\to\vv$ in $[L^2(\Omega)]^3$ for  some $\vv\in[H^2(\Omega)]^3$. Then, $\overline{H}_h[\vv_h]$ converges weakly to $D^2\vv$ in $[L^2(\Omega)]^3$.
\end{lemma}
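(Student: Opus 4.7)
The plan is to reduce weak convergence of $\overline{H}_h[\vv_h]$ to the already established weak convergence of the full discrete Hessian $H_h[\vv_h]$ from Lemma~\ref{weak-conv}, by means of an elementary duality argument with the piecewise constant $L^2$ projection.

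First I would fix an arbitrary test tensor $\phi\in[L^2(\Omega)]^{3\times 2\times 2}$ and denote by $\overline{\phi}$ its piecewise constant $L^2$ projection onto $\Th$, i.e.\ $\overline{\phi}|_T = \frac{1}{|T|}\int_T\phi$. Since $\overline{H}_h[\vv_h]$ is piecewise constant, the defining relation \eqref{modified-hessian} implies the key duality identity
\begin{equation*}
\int_\Omega \overline{H}_h[\vv_h]:\phi \;=\; \sum_{T\in\Th}\overline{H}_h[\vv_h]|_T\cdot\int_T\phi \;=\; \sum_{T\in\Th}\int_T H_h[\vv_h]:\overline{\phi} \;=\; \int_\Omega H_h[\vv_h]:\overline{\phi}.
\end{equation*}
This is the central step, because it converts the question about $\overline{H}_h[\vv_h]$ into one about $H_h[\vv_h]$ tested against a modified test function.

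Next I would split
\begin{equation*}
\int_\Omega H_h[\vv_h]:\overline{\phi} \;=\; \int_\Omega H_h[\vv_h]:\phi \;+\; \int_\Omega H_h[\vv_h]:(\overline{\phi}-\phi)
\end{equation*}
and handle the two pieces separately. For the first piece, the hypothesis $\|\vv_h\|_{H_h^2(\Omega)}\lesssim 1$ together with $\vv_h\to\vv$ in $[L^2(\Omega)]^3$ puts us exactly in the setting of Lemma~\ref{weak-conv}, so $H_h[\vv_h]\rightharpoonup D^2\vv$ weakly in $[L^2(\Omega)]^{3\times 2\times 2}$, which gives convergence of $\int_\Omega H_h[\vv_h]:\phi$ to $\int_\Omega D^2\vv:\phi$. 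For the second piece, I use the a~priori bound \eqref{eqn:hessian_upper} and the hypothesis $\|\vv_h\|_{H_h^2(\Omega)}\lesssim 1$ to get $\|H_h[\vv_h]\|_{L^2(\Omega)}\lesssim 1$ uniformly in $h$, so by Cauchy--Schwarz
\begin{equation*}
\left|\int_\Omega H_h[\vv_h]:(\overline{\phi}-\phi)\right| \;\lesssim\; \|\overline{\phi}-\phi\|_{L^2(\Omega)},
\end{equation*}
and the latter tends to $0$ as $h\to 0$ by the standard fact that the piecewise constant $L^2$ projection converges to the identity in $L^2(\Omega)$ on any shape-regular sequence $\Th$ with $h\to 0$.

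Combining the two parts yields $\int_\Omega \overline{H}_h[\vv_h]:\phi\to\int_\Omega D^2\vv:\phi$ for every $\phi\in[L^2(\Omega)]^{3\times 2\times 2}$, which is the desired weak convergence. I do not anticipate any real obstacle: the argument is essentially a commutation of the $L^2$ projection between the two sides of the duality pairing, and every ingredient is already available in the paper (the duality identity from \eqref{modified-hessian}, the stability \eqref{eqn:hessian_upper}, Lemma~\ref{weak-conv}, and density of piecewise constants in $L^2$). The only point to be mildly careful about is that $\phi$ is only assumed in $L^2$, not in any space with more regularity, so one should invoke $L^2$-density of piecewise constants rather than any interpolation estimate.
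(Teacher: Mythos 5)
Your proof is correct and follows essentially the same route as the paper: pass to the duality identity $\int_\Omega \overline{H}_h[\vv_h]:\phi = \int_\Omega H_h[\vv_h]:\overline{\phi}$, split off the main term handled by Lemma~\ref{weak-conv}, and control the remainder by the uniform bound on $H_h[\vv_h]$. The only cosmetic difference is where the density argument sits: the paper tests against $\phi\in[C^\infty_0(\Omega)]^{3\times2\times2}$ and gets the explicit rate $\|\overline\phi-\phi\|_{L^2}\lesssim h\|\nabla\phi\|_{L^2}$ (extending to all of $L^2$ by the standard duality of bounded sequences against a dense set), whereas you test directly against general $\phi\in L^2$ and invoke $\|\overline\phi-\phi\|_{L^2}\to 0$ without a rate; both are legitimate and equivalent.
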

\begin{proof}
For any $\phi\in[C^{\infty}_0(\Omega)]^{3\times2\times2}$, we have 
\begin{align*}
\int_{\Omega}\overline{H}_h[\vv_h]:\phi=\sum_{T\in\Th}\int_{T}H_h[\vv_h]: \overline{\phi}=\sum_{T\in\Th}\int_{T}H_h[\vv_h]:\phi+H_h[\vv_h]:(\overline\phi-\phi),
\end{align*}
where $\overline \phi:= \frac{1}{|T|}\int_T \phi$. Lemma \ref{weak-conv} (weak convergence of $H_h[\vy_h]$) implies
\begin{equation*}
\int_{\Omega}H_h[\vv_h]:\phi\to\int_{\Omega}D^2\vv:\phi.
\end{equation*}
On the other hand, the uniform boundedness \eqref{eqn:hessian_upper} and the assumption $\| \vv_h \|_{H_h^2(\Omega)} \lesssim 1$ guarantee that
\begin{align*}
\Big|\sum_{T\in\Th}H_h[\vv_h]:(\overline\phi-\phi)\Big| \lesssim h\|H_h[\vv_h]\|_{L^2(\Omega)}\|\nabla\phi\|_{L^2(\Omega)} \lesssim h \|\nabla\phi\|_{L^2(\Omega)} \to 0
\end{align*}
as $h \to 0^+$.
Combining these two estimates yields the desired result.
\end{proof}

\end{subsection}

%----------------------------------------------------------------------------------
\subsection{Discrete admissible set}\label{subsec: D-admissible}
%----------------------------------------------------------------------------------

We introduce the discrete counterpart of the admissible set $\A$. Given a parameter $\delta>0$ to be related later to $h$, we recall the discrete isometry defect $D_h[\vy_h]$ from \eqref{isometry-defect} and define the {\it discrete admissible set} $\A_{h,\delta}$ as
\begin{equation}\label{admissible-discrete}
\A_{h,\delta}:= \big\{\vy_h\in\V_h^k(\vvarphi,\Phi):\quad{D_h[\vy_h](x_T)} \le\delta\quad\forall T\in\Th \big\},
\end{equation}
where the polynomial degree is $k\geq 2$ and $x_T$ is the barycenter of $T\in\Th$.
The Dirichlet boundary conditions are hidden within the definition \eqref{discrete-set} of $\V_h^k(\vvarphi,\Phi)$ and imposed in the weak formulation; hence they do not contribute to any essential restriction in $\A_{h,\delta}$. 
The following two lemmas are simple consequences of \eqref{admissible-discrete}.

\begin{lemma}[$\A_{h,\delta}$ is non-empty]\label{Ahdelta}
For all $h>0$ there exists $\vy_h\in\V_h^k(\vvarphi,\Phi)$ such that $D_h[\vy_h](x_T)=0$ for all $T\in\Th$.
\end{lemma}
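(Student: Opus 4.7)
The key observation is that, as noted right after \eqref{discrete-set}, the set $\V_h^k(\vvarphi,\Phi)$ coincides with $[\V_h^k]^3$; the boundary data $(\vvarphi,\Phi)$ only enters through the definition of boundary jumps in \eqref{E:bd-jumps} and is imposed weakly via Nitsche's method. Thus membership in $\V_h^k(\vvarphi,\Phi)$ imposes no pointwise or geometric constraint.

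My plan is therefore to exhibit an explicit $\vy_h$ whose gradient is constant and orthonormal. Concretely, take the trivial embedding
\[
\vy_h(\vx) := (x_1,\, x_2,\, 0)^T,
\]
which is an affine (hence polynomial of total degree one) vector-valued function on $\Omega$. Since $k\ge 2$, we have $\mathbb{P}_1 \subset \mathbb{P}_k \subset \mathbb{Q}_k$, so $\vy_h\in [\V_h^k]^3 = \V_h^k(\vvarphi,\Phi)$ on both triangular and quadrilateral partitions.

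Next I would compute the gradient. Elementwise,
\[
\nabla \vy_h \equiv \begin{pmatrix} 1 & 0 \\ 0 & 1 \\ 0 & 0 \end{pmatrix}
\quad\text{on every } T\in\Th,
\]
so $\nabla \vy_h^T\nabla \vy_h = I_2$ identically on $\Omega$. Evaluating at any barycenter $x_T$ gives $\nabla \vy_h(x_T)^T \nabla \vy_h(x_T) - I_2 = 0$, so the discrete isometry defect \eqref{isometry-defect} satisfies $D_h[\vy_h](x_T) = 0$ for all $T\in\Th$, as desired.

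There is no real obstacle here: the lemma does not require $\vy_h$ to match the prescribed boundary data $(\vvarphi,\Phi)$, since those are treated à la Nitsche and affect only the energy, not the admissibility. Matching the data while preserving exactly zero isometry defect at barycenters would be a considerably stronger (and nontrivial) statement, but it is not needed here—the role of Lemma~\ref{Ahdelta} is purely to certify that $\A_{h,\delta}$ is non-empty so that the later minimization problem and gradient flow are well-posed.
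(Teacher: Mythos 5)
Your proof is correct and is essentially the paper's own argument: both take the trivial flat embedding $\vy_h(x_1,x_2)=(x_1,x_2,0)^T$, note that it is a polynomial of degree $\le k$ so lies in $[\V_h^k]^3=\V_h^k(\vvarphi,\Phi)$ (since the boundary data is imposed only weakly), and observe that $\nabla\vy_h^T\nabla\vy_h\equiv I_2$ gives zero isometry defect at every barycenter. You are slightly more explicit than the paper (which writes $\vy_h(x):=x$ and leaves the third component implicit), but the content is the same.
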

\begin{proof}
  Let $\vy_h(x):=x$ for $x\in\Omega$. We see that $\vy_h\in[\V_h^k]^3$, and therefore $\vy_h\in\V_h^k(\vvarphi,\Phi)$ because the Dirichlet boundary conditions are not imposed \emph{essentially} in the space $\V_h^k(\vvarphi,\Phi)$ defined in \eqref{discrete-set}. Moreover, $\I[\vy_h](x_T)=I_2$, whence $D_h[\vy_h](x_T)=0$.
\end{proof}

Note that this implies $\A_{h,\delta}$ is non-empty for any $\delta>0$, because $\A_{h,0}\subset\A_{h,\delta}$. 
We postpone until Theorem \ref{Gamma:E_h} the hard question whether $\A_{h,\delta}$ is sufficiently rich to approximate $\A$: for any $\vy\in\A$ there is $\vy_h\in\A_{h,\delta}$ close to $\vy$ in a suitable sense.
The following lemma provides an estimate on the amount of local stretch and shear associated with functions in $\mathbb A_{h,\delta}$. 
\begin{lemma}[pointwise isometry constraint]\label{lem:bi_iso_control}
If $\vy_h\in\A_{h,\delta}$, then for all $T\in\Th$ { and $i=1,2$}
\begin{equation}\label{eqn:bi_iso_control}
  1-\delta\le{|\partial_i\vy_h(x_T)|^2}\le1+\delta,
  \quad
  |\partial_1{\vy_h}(x_T)\cdot\partial_2{\vy_h}(x_T)|\le\delta.
\end{equation}
\end{lemma}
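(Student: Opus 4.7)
The plan is to simply unpack the definition of the discrete isometry defect $D_h[\vy_h]$ in \eqref{isometry-defect} and exploit the fact that any entry of a matrix is controlled by its Frobenius (or any equivalent) norm.

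First, I would compute the matrix $\nabla \vy_h^T \nabla \vy_h - I_2$ entry by entry. Since $(\nabla\vy_h^T\nabla\vy_h)_{ij} = \partial_i \vy_h \cdot \partial_j \vy_h$, we obtain the symmetric matrix
$$
\nabla \vy_h^T \nabla \vy_h - I_2 = \begin{pmatrix} |\partial_1 \vy_h|^2 - 1 & \partial_1\vy_h \cdot \partial_2\vy_h \\ \partial_1\vy_h \cdot \partial_2\vy_h & |\partial_2 \vy_h|^2 - 1 \end{pmatrix}.
$$

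Next, evaluating this identity at the barycenter $x_T$ and applying the assumption $\vy_h \in \A_{h,\delta}$ from \eqref{admissible-discrete}, we have $D_h[\vy_h](x_T) \le \delta$. Since $|\cdot|$ denotes the Frobenius norm and each entry of a matrix $A$ satisfies $|A_{ij}| \le |A|$, we immediately read off
$$
\big||\partial_i\vy_h(x_T)|^2 - 1\big| \le \delta \quad (i=1,2), \qquad |\partial_1\vy_h(x_T) \cdot \partial_2\vy_h(x_T)| \le \delta.
$$
The first inequality rearranges into the two-sided bound $1-\delta \le |\partial_i\vy_h(x_T)|^2 \le 1+\delta$, and the second is precisely the shear estimate, which completes the argument.

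I do not foresee any obstacle: the statement is a direct algebraic consequence of the definition \eqref{admissible-discrete} and the basic property $|A_{ij}| \le |A|$ for matrix norms. The only mild point worth mentioning is the convention on $|\cdot|$; if it were the spectral norm instead of Frobenius, the same entrywise bound would still hold (possibly up to a harmless factor of $\sqrt{2}$ that can be absorbed), so the argument is robust.
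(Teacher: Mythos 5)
Your proof is correct and follows essentially the same route as the paper's: both unpack the definition of $D_h[\vy_h](x_T)$ to obtain $\big|\partial_i\vy_h(x_T)\cdot\partial_j\vy_h(x_T)-\delta_{ij}\big|\le\delta$ from the entrywise bound by the matrix norm, and then read off the two claimed inequalities. Your remark about robustness with respect to the choice of matrix norm is a harmless and accurate addition.
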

\begin{proof}
From definition \eqref{admissible-discrete}, we deduce that for any $i,j=1,2$
\begin{equation*}
\Big|\partial_i{\vy_h}(x_T)\cdot\partial_j{\vy_h}(x_T)- \delta_{ij}\Big|\le\delta,
\end{equation*}
where $\delta_{ij}$ is the Kronecker delta.
The assertion thus follows.
\end{proof}

The pointwise control of isometry defect in \eqref{admissible-discrete} is inspired by the algorithms based on Kirchhoff finite elements developed in \cite{bartels2017bilayer,bartels2020stable}, where this constraint is imposed at the element vertices. Dealing with element barycenters is novel in the context of DG methods in that previous schemes impose this constraint in average over elements \cite{bonito2020discontinuous,bonito2020ldg-na}.
Having control at barycenters does not imply control of $\nabla_h\vy_h$ anywhere else, and dictates the use of mid-point quadrature for the discretization of the cubic nonlinear energy $C_h$. We discuss this next.

%---------------------------------------------------------------------------------
\subsection{Discrete energy}\label{s:discrete_min}
%---------------------------------------------------------------------------------

The LDG approximation of the energy $E[.]$ reads
\begin{equation}\label{discrete-energy}
E_h[\vy_h]:=B_h[\vy_h]-C_h[\vy_h]
\end{equation}
where $B_h[.]$ approximates the bending energy \eqref{energy-quad} and $C_h[.]$ approximates the cubic interaction energy in \eqref{energy-nonlinear}. The energy $B_h[\vy_h]$ is defined by
\begin{equation}\label{discrete-energy-0}
  B_h[\vy_h]:=\frac{1}{2}\int_{\Omega}\big|H_h[\vy_{h}] \big|^2 + S_h[\vy_h],
\end{equation}
where 
\begin{equation}\label{stabilization}
S_h[\vy_h]:= \gamma_1\|\h^{-\frac12}[\nabla_h\mathbf{y}_h]\|_{L^2(\Gh^a)}^2+\gamma_0\|\h^{-\frac32}[\mathbf{y}_h]\|_{L^2(\Gh^a)}^2
\end{equation}
is a stabilization term with parameters $\gamma_0,\gamma_1>0$, whereas $C_h[\vy_h]$ is given by \eqref{quadrature}
\begin{equation}\label{quadrature-2}
C_h[\vy_h]:=\sum_{i,j=1}^2\sum_{T\in\Th}|T|\Big( \overline{H}_h[\vy_h]_{ij}\cdot(\partial_1\vy_h\times\partial_2\vy_h)Z_{ij}\Big)(x_T).
\end{equation}
With these notations the discrete minimization problem reads
\begin{equation} \label{discrete_minimization_bilayer}
\min_{\vy_h\in\A_{h,\delta}} E_h[\vy_h].
\end{equation}

We devote the rest of this section to examine the cubic energy \eqref{quadrature-2}.
Combining Lemma~\ref{lem:bi_iso_control} (pointwise isometry constraint) with Lemma~\ref{L2bound-modified} (stability of $\overline{H}_h[\vy_h]$) yields
$$
\big| C_h[\vy_h] \big| \lesssim (1+\delta) \| \vy_h \|_{H_h^2(\Omega)} \| Z \|_{L^2(\Omega)},
$$
whence $\big| C_h[\vy_h] \big|$ is uniformly bounded whenever $\| \vy_h \|_{H_h^2(\Omega)}$ is.
Another crucial aspect of \eqref{quadrature-2} is the convergence of $C_h$ towards the continuous energy $C$ within the basic $H^2$-regularity framework. This requires dealing with the reduced Hessian $\overline{H}_h[\vy_h]$ as we show next.

\begin{lemma}[convergence of cubic energy]\label{l:conv_nl}
Let $Z$ be piecewise constant over $\Th$. Let
$\vy_h\in \A_{h,\delta}$ be a sequence of discrete deformations satisfying
\begin{equation}\label{reg-yh}
  \|\vy_h\|_{H_h^2(\Omega)} \lesssim 1
  \quad\forall \, h>0
\end{equation}
and such that $\vy_h\to\vy$ in $[L^2(\Omega)]^3$, $\nabla_h \vy_h \to \nabla \vy$ in $[L^2(\Omega)]^{3\times 2}$ for  $\vy\in [H^2(\Omega)\cap W^1_\infty(\Omega)]^3$ as $h \to 0^+$.
Then 
\begin{equation}\label{convergence-Ch}
\lim_{h\to0^+} C_h[\vy_h] = C[\vy].
\end{equation}
\end{lemma}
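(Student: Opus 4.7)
The plan is to rewrite $C_h[\vy_h]$ as an $L^2(\Omega)$ pairing between the piecewise constant reduced Hessian and a piecewise constant representative of the cross product $\partial_1\vy_h\times\partial_2\vy_h$, and then to conclude by pairing weak $L^2$ convergence of $\overline{H}_h[\vy_h]$ (Lemma \ref{weakconv-modified}) with strong $L^2$ convergence of the representative. Since $Z$ and $\overline{H}_h[\vy_h]$ are piecewise constant on $\Th$, midpoint evaluation coincides with cell-averaging for those factors, so defining $\vw_h \in [L^2(\Omega)]^3$ by $\vw_h|_T := (\partial_1\vy_h\times\partial_2\vy_h)(x_T)$ gives
\begin{equation*}
C_h[\vy_h] \;=\; \sum_{i,j=1}^2 \int_\Omega \overline{H}_h[\vy_h]_{ij}\cdot \vw_h\, Z_{ij}.
\end{equation*}

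The heart of the argument is proving $\vw_h\to \partial_1\vy\times\partial_2\vy$ strongly in $[L^2(\Omega)]^3$. I would introduce the piecewise constant fields $\vp_h^{(i)}|_T := \partial_i\vy_h(x_T)$, so that $\vw_h=\vp_h^{(1)}\times\vp_h^{(2)}$. Lemma \ref{lem:bi_iso_control} immediately yields the uniform bound $\|\vp_h^{(i)}\|_{L^\infty(\Omega)}\le\sqrt{1+\delta}$. Next, for each $T\in\Th$ a first-order Taylor bound on the polynomial $\partial_i\vy_h$ gives
\begin{equation*}
\|\vp_h^{(i)}-\partial_i\vy_h\|_{L^2(T)} \;\lesssim\; h_T\, \|\nabla\partial_i\vy_h\|_{L^\infty(T)}\,|T|^{1/2},
\end{equation*}
and a local inverse estimate $\|\nabla\partial_i\vy_h\|_{L^\infty(T)}\lesssim h_T^{-1}\|D^2\vy_h\|_{L^2(T)}/|T|^{1/2}$ combines with the hypothesis \eqref{reg-yh} after summation over $T$ to give $\|\vp_h^{(i)}-\partial_i\vy_h\|_{L^2(\Omega)}\lesssim h$. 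Combined with the assumed strong convergence $\nabla_h\vy_h\to\nabla\vy$ in $[L^2(\Omega)]^{3\times 2}$, this yields $\vp_h^{(i)}\to \partial_i\vy$ strongly in $[L^2(\Omega)]^3$. Because the $\vp_h^{(i)}$ are uniformly bounded in $L^\infty$ and $\partial_i\vy\in L^\infty$, writing the difference $\vp_h^{(1)}\times\vp_h^{(2)}-\partial_1\vy\times\partial_2\vy$ as a telescoping sum and applying Hölder's inequality upgrades the convergence to $\vw_h\to \partial_1\vy\times\partial_2\vy$ strongly in $[L^2(\Omega)]^3$; multiplication by the uniformly bounded piecewise constant $Z_{ij}$ preserves strong $L^2$ convergence.

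To finish, Lemma \ref{weakconv-modified} gives $\overline{H}_h[\vy_h]_{ij}\rightharpoonup \partial_{ij}\vy$ weakly in $[L^2(\Omega)]^3$, so pairing this weak convergence with the strong convergence of $\vw_h Z_{ij}$ established above lets each of the four dual pairings pass to the limit, producing \eqref{convergence-Ch}. The main obstacle is the middle step: naive $L^2$ convergence of the factors $\partial_i\vy_h$ is insufficient to pass to the limit in the nonlinear cross product, and the decisive ingredient is the uniform pointwise bound \eqref{eqn:bi_iso_control} enforced by membership in $\A_{h,\delta}$, which upgrades midpoint evaluations to an $L^\infty$-bounded sequence and thereby turns weak$\,\times\,$weak into weak$\,\times\,$strong at the relevant step.
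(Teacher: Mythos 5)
Your proof is correct, but the route is genuinely different from the paper's and somewhat cleaner. The paper regularizes $\vy$ to get $\vy^\epsilon\in[H^3(\Omega)]^3$, compares $C_h[\vy_h]$ with $C[\vy^\epsilon]$, and splits the difference into three residuals $R_1,R_2,R_3$, where the quadrature error $R_3(T)=\overline{H}_h[\vy_h]_{ij}Z_{ij}\int_T(\vf(x_T)-\vf)$ is controlled by Bramble--Hilbert with $\vf=\partial_1\vy^\epsilon\times\partial_2\vy^\epsilon\in W^2_1$. You instead observe that, since $\overline H_h[\vy_h]$ and $Z$ are piecewise constant, the midpoint sum is \emph{exactly} an $L^2$ inner product against the piecewise-constant field $\vw_h|_T=(\partial_1\vy_h\times\partial_2\vy_h)(x_T)$, and you establish strong $L^2$ convergence $\vw_h\to\partial_1\vy\times\partial_2\vy$ directly, using the $\ell^\infty$ control of $\nabla\vy_h(x_T)$ from Lemma~\ref{lem:bi_iso_control}, the bound $\|\vp_h^{(i)}-\partial_i\vy_h\|_{L^2(\Omega)}\lesssim h$, the hypothesis $\nabla_h\vy_h\to\nabla\vy$ in $L^2$, and $\nabla\vy\in L^\infty$; pairing with the weak convergence of $\overline H_h[\vy_h]$ (Lemma~\ref{weakconv-modified}) then finishes the argument. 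This dispenses with the $\epsilon$-regularization and the three-term splitting altogether and makes the role of the reduced Hessian and the midpoint rule transparent, while the paper's version localizes all the quadrature error in a single term and is more in the style of the companion regularization used for the lim-sup recovery sequence.

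One small correction: the local inverse estimate you state,
$\|\nabla\partial_i\vy_h\|_{L^\infty(T)}\lesssim h_T^{-1}|T|^{-1/2}\|D^2\vy_h\|_{L^2(T)}$,
has a spurious factor $h_T^{-1}$; the correct scaling in $2$d for shape-regular $T$ is
$\|\nabla\partial_i\vy_h\|_{L^\infty(T)}\lesssim |T|^{-1/2}\|\nabla\partial_i\vy_h\|_{L^2(T)}\le|T|^{-1/2}\|D^2\vy_h\|_{L^2(T)}$.
With the extra $h_T^{-1}$ the Taylor bound would only yield $\|\vp_h^{(i)}-\partial_i\vy_h\|_{L^2(\Omega)}\lesssim 1$, which is not enough; with the correct estimate you get the intended
$\|\vp_h^{(i)}-\partial_i\vy_h\|_{L^2(T)}\lesssim h_T\|D^2\vy_h\|_{L^2(T)}$,
hence $\|\vp_h^{(i)}-\partial_i\vy_h\|_{L^2(\Omega)}\lesssim h$ after squaring, summing, and using \eqref{reg-yh}. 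The rest of the argument is sound as written.
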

\begin{proof}
For any $\epsilon>0$, it suffices to show that
\begin{equation}\label{e:smooth}
\limsup_{h\to0^+}\big| C[\vy] - C_h[\vy_h] \big| \lesssim \epsilon. 
\end{equation}
We need a regularization argument to deal with the effect of quadrature.
Since $\Omega$ is Lipschitz we can regularize $\vy$, say by convolution, in such a manner that the approximate
deformation $\vy^\epsilon \in [H^3(\Omega)]^3$ satisfies
\begin{equation}\label{regularization}
\| \vy^\epsilon \|_{H^2(\Omega)} + \| \vy^\epsilon \|_{W^1_\infty(\Omega)}\lesssim 1,
\qquad 
\| \vy - \vy^\epsilon \|_{H^2(\Omega)} \lesssim \epsilon;
\end{equation}
we recall the convention that constants hidden in $\lesssim$ are independent of $h$ and $\epsilon$.
We point out that this procedure is simpler than the regularization due to Hornung \cite{hornung2008approximating} in
that $\vy^\epsilon$ need not be an isometry. We first observe that the energies  $C[\vy]$ and $C[\vy^\epsilon]$ can be made arbitrarily close because
\begin{align*}
\Big| C[\vy] &- C[\vy^\epsilon] \Big| \lesssim  \|\vy-\vy^{\epsilon}\|_{H^2(\Omega)}\|\partial_1\vy\|_{L^2(\Omega)}\|\partial_2\vy\|_{L^{\infty}(\Omega)} \| Z\|_{L^\infty(\Omega)}\\
&+\|\vy^{\epsilon}\|_{H^2(\Omega)}\|\vy-\vy^{\epsilon}\|_{H^1(\Omega)}(\|\partial_2\vy\|_{L^{\infty}(\Omega)}+\|\partial_1\vy^{\epsilon}\|_{L^{\infty}(\Omega)}) \| Z\|_{L^\infty(\Omega)} \lesssim \epsilon.
\end{align*}
We next write  $C_h[\vy_h] - C[\vy^\epsilon] = \sum_{i,j=1}^2\sum_{T\in\Th} R_1(T) + R_2(T) + R_3(T)$, where
\begin{align*}
R_1 (T)&:=\int_T(\overline{H}_h[\vy_h]_{ij}-\partial_{ij}\vy^\epsilon)\cdot(\partial_1\vy^\epsilon\times\partial_2\vy^\epsilon)Z_{ij}, \\
R_2 (T)&:=|T|\big[\overline{H}_h[\vy_h]_{ij}\cdot(\partial_1\vy_h\times\partial_2\vy_h-\partial_1\vy^\epsilon\times\partial_2\vy^\epsilon)Z_{ij}\big](x_T), \\
R_3 (T)&:=|T|\big[\overline{H}_h[\vy_h]_{ij}\cdot(\partial_1\vy^\epsilon\times\partial_2\vy^\epsilon)Z_{ij}\big](x_T)-\int_T\overline{H}_h[\vy_h]_{ij}\cdot(\partial_1\vy^\epsilon\times\partial_2\vy^\epsilon)Z_{ij},
\end{align*}
and disregard the non critical dependence on $i,j=1,2$ in the notation.
Lemma \ref{weakconv-modified} (weak convergence of $\overline{H}_h[\vy_h]$) in conjunction with \eqref{regularization} implies that
\begin{equation*} 
\limsup_{h\to0^+}\sum_{i,j=1}^2\sum_{T\in\Th} |R_1(T)|~\lesssim~\epsilon.
\end{equation*}
For $R_2$, we note that
\begin{align*}
\big|\big(\partial_1\vy_h\times\partial_2\vy_h-\partial_1\vy^\epsilon\times\partial_2\vy^\epsilon\big)(x_T)\big|
& \le \big|\nabla\big(\vy_h - \vy^\epsilon\big)(x_T)\big|
\big(|\nabla\vy_h(x_T)| + |\nabla\vy^\epsilon(x_T)| \big).
\end{align*}
By Lemma \ref{lem:bi_iso_control} (pointwise isometry constraint), the fact that $\vy_h\in\A_{h,\delta}$ and \eqref{regularization}, we have the uniform bound $|\nabla\vy_h(x_T)| + |\nabla\vy^\epsilon(x_T)|\lesssim1$ for all $x_T$. If $I_h\nabla\vy^\epsilon$ indicates the standard $\mathbb{P}_1$-Lagrange interpolant of $\nabla\vy^\epsilon$, applying approximating properties of $I_h$ together with an inverse inequality for polynomials, we conclude
\begin{align*}
\big|\nabla\big(\vy_h - \vy^\epsilon\big)(x_T)\big|&\le\big|\big(\nabla\vy_h - I_h\nabla\vy^\epsilon\big)(x_T)\big|+\big|\big(I_h\nabla\vy^\epsilon-\nabla\vy^\epsilon\big)(x_T)\big| \\
&\lesssim h_T^{-1}\|\nabla\vy_h - I_h\nabla\vy^\epsilon\|_{L^2(T)}+h_T\|D^3\vy^\epsilon\|_{L^2(T)}.
\end{align*}
We next add and subtract $\nabla\vy^\epsilon$ in the first term of the right-hand side and apply again an interpolation estimate of $I_h$ to derive
\begin{equation*}\label{eq:est-diff-yhyeps}
|T|^{1/2}\big|\nabla\big(\vy_h - \vy^\epsilon\big)(x_T)\big|\lesssim \|\nabla(\vy_h - \vy^\epsilon)\|_{L^2(T)} + h_T^2 \|D^3 \vy^\epsilon\|_{L^2(T)}.
\end{equation*}
Moreover, since $|T|^{1/2}\big|\overline{H}_h[\vy_h]_{ij}(x_T)\big|=\|\overline{H}_h[\vy_h]_{ij}\|_{L^2(T)}$ because $\overline{H}_h[\vy_h]$ is piecewise constant, we obtain
\[
 |R_2(T)| \lesssim \|\overline{H}_h[\vy_h]_{ij}\|_{L^2(T)}
\big( \|\nabla(\vy_h - \vy)\|_{L^2(T)} + \|\nabla(\vy - \vy^\epsilon)\|_{L^2(T)} + h_T^{2}\|D^3 \vy^\epsilon\|_{L^2(T)} \big),
\]
where the hidden constant is proportional to $\|Z\|_{L^\infty(\Omega)}$.  
After summing over elements, Lemma \ref{L2bound-modified} (stability of $\overline{H}_h[\vy_h]$), together with
the assumption $\nabla_h\vy_h\to\nabla\vy$ in $[L^2(\Omega)]^{3\times2}$, \eqref{reg-yh} and \eqref{regularization}, yields
\[
\limsup_{h\to0^+}\sum_{i,j=1}^2\sum_{T\in\Th} |R_2(T)|\lesssim\epsilon.
\]

It remains to deal with $R_3$ which entails the effect of quadrature. Since $Z$ and $\overline{H}_h[\vy_h]$ are constant in $T$, which is the chief reason for utilizing the reduced discrete Hessian, we can equivalently rewrite $R_3(T)$ as follows:
\[
R_3(T) = \overline{H}_h[\vy_h]_{ij} Z_{ij} \int_T (\vf(x_T) - \vf)
\]
with $\vf = \partial_1\vy^\epsilon\times\partial_2\vy^\epsilon$. The Bramble-Hilbert Lemma, in conjunction with the Sobolev embedding $W^2_1(T)\subset C(\overline{T})$ (cf. \cite[Lemma 4.3.4]{brenner2007mathematical}), implies the existence of a linear polynomial $\vp\in[\mathbb{P}_1(T)]^3$ such that $\|\vf - \vp\|_{L^{\infty}(T)} \lesssim \|D^2\vf\|_{L^1(T)}$. Since the mid-point quadrature is exact for linears, we deduce
\begin{align*}
\Big|\int_T ( \vf(x_T) - \vf)\Big|=\Big|\int_T \big\{ \big(\vf - \vp\big)(x_T) + \big(\vp - \vf\big)\big\} \Big|\le 2|T|\|\vf - \vp\|_{L^{\infty}(T)} \lesssim h_T^2\|D^2\vf\|_{L^1(T)}.
\end{align*}
Moreover, invoking \eqref{regularization},
\[
\|D^2\vf\|_{L^1(T)} \lesssim \|D^3\vy^\epsilon\|_{L^2(T)} \|\nabla\vy^\epsilon\|_{L^2(T)}
+ \|D^2\vy^\epsilon\|_{L^2(T)}^2 \lesssim \|\vy^\epsilon\|_{H^3(T)}.
\]
Inserting this back into $R_3(T)$ and adding we end up with
\[
\limsup_{h\to0^+} \sum_{i,j=1}^2\sum_{T\in\Th} |R_3(T)|\lesssim
\limsup_{h\to0^+} \Big(h \|\overline{H}_h[\vy_h]\|_{L^2(\Omega)}\Big) \|\vy^\epsilon\|_{H^3(\Omega)} = 0,
\]
because of Lemma \ref{L2bound-modified}. Altogether, we arrive at
\[
\limsup_{h\to0^+} \big|C_h[\vy_h] - C[\vy^\epsilon] \big| \lesssim \epsilon
\]
which implies the desired estimate \eqref{e:smooth}.
\end{proof}

It is worth realizing the role of the reduced discrete Hessian $\overline{H}_h[\vy_h]$
in the preceding proof, namely that it factors out
the integral defining $R_3(T)$. If we had used the discrete Hessian $H_h[\vy_h]$ instead, then there would
have been a term of the form $h_T^2 \|D^2 H_h[\vy_h]\|_{L^2(T)}$ that could only be handled via an inverse inequality
within the $H^2$-regularity setting.
This in turn would have gotten rid of the factor $h_T^2$ and the proof of \eqref{convergence-Ch} would have failed.

%%%%%%%%%%%%%%%%%%%%%%%%%%%%%%%%%%%%%%%%%%%%%%%%%%%%%%%%%%%%%%%%%%%%%%%%%%%%%%%%%%%%
\section{$\Gamma$-convergence}\label{Gamma-bilayer}
%%%%%%%%%%%%%%%%%%%%%%%%%%%%%%%%%%%%%%%%%%%%%%%%%%%%%%%%%%%%%%%%%%%%%%%%%%%%%%%%%%%%

The reduced energy \eqref{energy-3} consists of a bending energy $B[\vy]$ and a cubic term $C[\vy]$, and so does its discrete counterpart \eqref{discrete-energy}, namely $B_h[\vy_h]$ and $C_h[\vy_h]$. Compactness and
$\Gamma$-convergence of the bending energy part, being similar to the single layer model, could be deduced from the results in \cite{bonito2020ldg-na}. For instance, we have that for any $\gamma_0,\gamma_1>0$, there exists a constant $c_{coer}$ such that \cite[(37) and (38)]{bonito2020ldg-na}
\begin{equation} \label{eqn:coercivity_bi}
  c_{coer}^{-1} \|\vy_h\|_{H_h^2(\Omega)}^2 \le B_h[\vy_h] \le c_{cont} \|\vy_h\|_{H_h^2(\Omega)}^2
  \qquad  \forall \vy_h \in \V^k_h(\vvarphi,\Phi),
\end{equation}
and the constant $c_{coer} \to \infty$ if either $\gamma_0$ or $\gamma_1\to0^+$.
In spite of that, \cite{bonito2020ldg-na} enforces the isometry constraint in average and constructs the recovery sequence needed for $\Gamma$-convergence via standard nodal interpolation. 
Therefore, the analysis below incorporates new ideas which do not follow from \cite{bonito2020ldg-na}.

We start with the equicoercivity of energy $E_h$. The difficulty is dealing with $C_h$.
\begin{lemma}[coercivity of total energy]\label{coercivity:E_h_bi}
Let $\delta>0$ and $\vy_h \in \A_{h,\delta}$. 
There exists a constant $\tilde c_{coer}>0$ independent of $\delta$, but depending on the given data $Z$ and $\Th$ only through its shape regularity constant, such that
\begin{equation}\label{e:coercfull}
 (2c_{coer})^{-1}\|\vy_h\|^2_{H_h^2(\Omega)} \leq   E_h[\vy_h] + \tilde c_{coer}(1+\delta)^2.
 %(1+\delta)^2 \|Z\|_{L^\infty(\Omega)}^2.
\end{equation}
\end{lemma}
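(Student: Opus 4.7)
The plan is to combine the already established coercivity of the bending part $B_h$ with the uniform bound on the cubic term $C_h$ derived right after \eqref{quadrature-2}, and then absorb the linear-in-$\|\vy_h\|_{H_h^2(\Omega)}$ contribution via Young's inequality.

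First I would recall \eqref{eqn:coercivity_bi}, which gives the lower bound $B_h[\vy_h] \geq c_{coer}^{-1}\|\vy_h\|_{H_h^2(\Omega)}^2$ for every $\vy_h\in\V_h^k(\vvarphi,\Phi)$, and in particular for $\vy_h\in\A_{h,\delta}$. Next, I would invoke the a priori estimate for the cubic energy obtained by combining Lemma \ref{lem:bi_iso_control} (pointwise isometry constraint, which bounds $|\partial_1\vy_h(x_T)|, |\partial_2\vy_h(x_T)| \lesssim (1+\delta)^{1/2}$) with Lemma \ref{L2bound-modified} (stability of the reduced discrete Hessian, which gives $\|\overline{H}_h[\vy_h]\|_{L^2(\Omega)} \le c_{stab}\|\vy_h\|_{H_h^2(\Omega)}$). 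This yields
\begin{equation*}
|C_h[\vy_h]| \lesssim (1+\delta)\, \|Z\|_{L^2(\Omega)}\, \|\vy_h\|_{H_h^2(\Omega)},
\end{equation*}
where the hidden constant depends only on the shape regularity of $\Th$.

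Combining these two ingredients, I obtain
\begin{equation*}
E_h[\vy_h] = B_h[\vy_h] - C_h[\vy_h] \geq c_{coer}^{-1}\|\vy_h\|_{H_h^2(\Omega)}^2 - \tilde C (1+\delta) \|\vy_h\|_{H_h^2(\Omega)},
\end{equation*}
with $\tilde C$ proportional to $\|Z\|_{L^2(\Omega)}$ and the shape regularity constant. Applying Young's inequality with parameter tuned to $(2c_{coer})^{-1}$, namely
\begin{equation*}
\tilde C (1+\delta) \|\vy_h\|_{H_h^2(\Omega)} \leq (2c_{coer})^{-1} \|\vy_h\|_{H_h^2(\Omega)}^2 + \frac{c_{coer}\tilde C^2}{2}(1+\delta)^2,
\end{equation*}
and rearranging gives the claim \eqref{e:coercfull} with $\tilde c_{coer} := c_{coer}\tilde C^2/2$, which is independent of $\delta$ and $h$ as required.

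There is no real obstacle here; the only subtle point is to make sure the constant $\tilde c_{coer}$ does not degenerate with $\delta$, which is why the bound on $|C_h[\vy_h]|$ must depend linearly on $(1+\delta)$ rather than, say, exponentially, and why we use Lemma \ref{L2bound-modified} for $\overline{H}_h$ (as opposed to arguing with $H_h$ itself, which would produce the same bound but via a less transparent route). The rest is a standard coercivity-plus-Young's-inequality argument.
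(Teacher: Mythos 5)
Your proof is correct and follows essentially the same route as the paper: decompose $E_h = B_h - C_h$, use the coercivity \eqref{eqn:coercivity_bi} for $B_h$, bound $C_h$ linearly in $\|\vy_h\|_{H_h^2(\Omega)}$ via Lemma \ref{lem:bi_iso_control} and Lemma \ref{L2bound-modified}, then absorb by Young's inequality. The only cosmetic difference is that the paper writes the decomposition as $B_h = E_h + C_h$ and carries $\|Z\|_{L^\infty(\Omega)}$ rather than $\|Z\|_{L^2(\Omega)}$, but both norms work since $Z$ is piecewise constant and $\Omega$ is bounded.
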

\begin{proof}
We write $B_h = E_h + C_h$ and employ \eqref{eqn:coercivity_bi} for $B_h$ to obtain
$$
c_{coer}^{-1} \| \vy_h \|_{H^2_h(\Omega)}^2 \leq E_h[\vy_h] + C_h[\vy_h].
$$
It remains to estimate the cubic term $C_h[\vy_h]$. 
Combining Lemma \ref{lem:bi_iso_control} (pointwise isometry constraint) with the Cauchy-Schwarz inequality yields
\begin{align*}
C_h[\vy_h] &\leq \sum_{i,j=1}^2\sum_{T\in\Th}|T|\big|\overline{H}_h[\vy_h]_{ij}\cdot(\partial_1\vy_h\times\partial_2\vy_h)Z_{ij}\big|(x_T) \\
& \le \sum_{i,j=1}^2 \sum_{T\in\Th} |T|^{\frac12} \|\overline{H}_h[\vy_h]_{ij}\|_{L^2(T)} |\partial_1\vy_h(x_T)| \, |\partial_2\vy_h(x_T)|\|Z\|_{L^{\infty}(T)}\\
&\le 2(1+\delta)\|Z\|_{L^{\infty}(\Omega)}|\Omega|^{\frac{1}{2}}\|\overline{H}_h[\vy_h]\|_{L^2(\Omega)}.
\end{align*}
Invoking Lemma \ref{L2bound-modified} (stability of $\overline{H}_h[\vy_h]$) and Young's inequality yields
\begin{equation}\label{eq:coer-eh-ehtilde}
  \frac{1}{2c_{coer}} \| \vy_h \|_{H^2_h(\Omega)}^2 \leq E_h[\vy_h] + 2 c_{coer}c_{stab}^2|\Omega|\|Z\|_{L^{\infty}(\Omega)}^2
  (1+\delta)^2,
\end{equation}
which is the desired estimate \eqref{e:coercfull} with $\tilde c_{coer} = 2 c_{coer}c_{stab}^2|\Omega|\|Z\|_{L^{\infty}(\Omega)}^2$.
\end{proof}

We now prove {a compactness result and} $\Gamma$-convergence of $E_h$ towards $E$, which consists of a $\liminf$ and a $\limsup$ property.

\begin{thm}[{compactness and $\Gamma$-convergence}]\label{Gamma:E_h}
Let $\delta=\delta(h)\rightarrow 0$ as $h\rightarrow 0^+$. Then
\begin{enumerate}[label=(\roman*),itemindent=-0ex]
{\item Compactness: Let $\vy_h\in\A_{h,\delta}$ be a sequence such that $E_h[\vy_h]$ is uniformly bounded in $h$. Then there exists $\vy\in\A$ such that $\vy_h\to\vy$ in $[L^2(\Omega)]^3$ and $\nabla_h\vy_h \to  \nabla\vy$ in $[L^2(\Omega)]^{3\times2}$ for a subsequence (not relabeled).}
\item Lim-inf property: 
  {If $\vy_h\in\A_{h,\delta}$ satisfies $\vy_h\to\vy$ in $[L^2(\Omega)]^3$ and $\nabla_h\vy_h \to  \nabla\vy$ in $[L^2(\Omega)]^{3\times2}$ where $\vy\in[H^2(\Omega)\cap W^1_\infty(\Omega)]^3$, then
$E[\vy]\le\liminf\limits_{h\to0^+}E_h[\vy_h]$.}
%
%and $\vy\in\A$ be the sequence and limit given in (i) respectively. Then
%Let $\vy_h\in\A_{h,\delta}$ be a sequence such that $E_h[\vy_h]$ is uniformly bounded in $h$. Then there exists $\vy\in\A$ such that $\vy_h\to\vy$ in $[L^2(\Omega)]^3$ for a subsequence (not relabeled) and $E[\vy]\le\liminf\limits_{h\to0^+}E_h[\vy_h]$.
\item Lim-sup property: For any $\vy\in\A$ there exists $\vy_h\in\A_{h,\delta}$ such that $\vy_h\to\vy$ in $[L^2(\Omega)]^3$ and $E[\vy]\ge\limsup\limits_{h\to0^+}E_h[\vy_h]$.
\end{enumerate}
\end{thm}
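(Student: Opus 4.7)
For part (i), the plan is to combine Lemma \ref{coercivity:E_h_bi} with discrete Rellich-type compactness and then transfer the barycenter isometry constraint to an a.e.\ statement. Uniform boundedness of $E_h[\vy_h]$ together with $\delta\to 0$ give $\|\vy_h\|_{H_h^2(\Omega)}\lesssim 1$, and broken Sobolev compactness (cf.\ \cite{bonito2020ldg-na}) yields a non-relabeled subsequence with $\vy_h\to\vy$ in $[L^2(\Omega)]^3$, $\nabla_h\vy_h\to\nabla\vy$ in $[L^2(\Omega)]^{3\times 2}$, and $\vy\in[H^2(\Omega)]^3$; boundedness of the $\h^{-3}$ and $\h^{-1}$ weighted boundary-jump terms in $S_h$ forces $\vy=\vvarphi$, $\nabla\vy=\Phi$ on $\Gamma^D$ in trace. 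To pass the pointwise constraint to the limit, I set $v_h:=\nabla_h\vy_h^T\nabla_h\vy_h-I_2$, note $|v_h(x_T)|\le\delta$, and use the inverse inequalities $\|\nabla_h\vy_h\|_{L^\infty(T)}\lesssim h_T^{-1}\|\nabla_h\vy_h\|_{L^2(T)}$, $\|D_h^2\vy_h\|_{L^\infty(T)}\lesssim h_T^{-1}\|D_h^2\vy_h\|_{L^2(T)}$ together with $|T|\sim h_T^2$ to obtain
\begin{equation*}
\|v_h-v_h(x_T)\|_{L^1(T)}\;\lesssim\; h_T\,\|\nabla_h\vy_h\|_{L^2(T)}\,\|D_h^2\vy_h\|_{L^2(T)}.
\end{equation*}
Summing over $T$ and applying Cauchy--Schwarz together with the uniform $H_h^2$ bound gives $\|v_h\|_{L^1(\Omega)}\le\delta|\Omega|+O(h)\to 0$. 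Meanwhile, strong $L^2$ convergence of $\nabla_h\vy_h$ combined with the factorization $a^Ta-b^Tb=a^T(a-b)+(a-b)^Tb$ and Cauchy--Schwarz yield $\int_\Omega\nabla_h\vy_h^T\nabla_h\vy_h:\phi\to\int_\Omega\nabla\vy^T\nabla\vy:\phi$ for every $\phi\in[C^\infty_c(\Omega)]^{2\times 2}$. Identifying the two limits forces $\I[\vy]=I_2$ a.e., whence $\vy\in[W^{1,\infty}(\Omega)]^3$ and $\vy\in\A$.

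Part (ii) is more direct. Up to extracting a subsequence where $E_h[\vy_h]$ is bounded (otherwise there is nothing to prove), Lemma \ref{coercivity:E_h_bi} gives $\|\vy_h\|_{H_h^2(\Omega)}\lesssim 1$, and I split $E_h[\vy_h]=B_h[\vy_h]-C_h[\vy_h]$. Since $\vy\in[H^2(\Omega)\cap W^{1,\infty}(\Omega)]^3$ by assumption, Lemma \ref{l:conv_nl} applies and gives $C_h[\vy_h]\to C[\vy]$. For the bending part, Lemma \ref{weak-conv} gives $H_h[\vy_h]\rightharpoonup D^2\vy$ weakly in $[L^2(\Omega)]^{3\times 2\times 2}$, so lower semicontinuity of the $L^2$ norm under weak convergence yields $\|D^2\vy\|_{L^2(\Omega)}^2\le\liminf_h\|H_h[\vy_h]\|_{L^2(\Omega)}^2$; the nonnegativity $S_h[\vy_h]\ge 0$ then gives $B[\vy]\le\liminf_h B_h[\vy_h]$. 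Combining yields $E[\vy]\le\liminf_h E_h[\vy_h]$.

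Part (iii) is the main obstacle. The recovery sequence must satisfy the pointwise barycenter constraint $D_h[\vy_h](x_T)\le\delta$ with only $\delta\to 0$ available, whereas a naive Lagrange interpolant of a smooth isometry produces a defect of size $O(h^k)$ that may exceed $\delta(h)$. My plan is a two-scale construction. First, invoke Hornung's approximation theorem \cite{hornung2008approximating} to approximate $\vy\in\A$ in $H^2$ by smooth isometries $\vy^\epsilon\in[C^\infty(\overline\Omega)]^3\cap\A$ with $\|\vy-\vy^\epsilon\|_{H^2(\Omega)}\lesssim\epsilon$. Second, for each fixed $\epsilon$, construct $\vy_h$ elementwise as a Hermite-type interpolant of $\vy^\epsilon$ in $[\mathbb P_k(T)]^3$ (or $[\mathbb Q_k(T)]^3$), $k\ge 2$, prescribing on every $T\in\Th$ the exact values $\vy_h(x_T)=\vy^\epsilon(x_T)$ and $\nabla\vy_h(x_T)=\nabla\vy^\epsilon(x_T)$ together with enough additional nodal conditions (for instance values at vertices and edge midpoints) to determine the polynomial. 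Since $\vy^\epsilon$ is an exact isometry, this choice yields $D_h[\vy_h](x_T)=|\nabla\vy^\epsilon(x_T)^T\nabla\vy^\epsilon(x_T)-I_2|=0$ for every $T$, so $\vy_h\in\A_{h,0}\subset\A_{h,\delta}$ trivially. Standard interpolation estimates for smooth functions then give $\|\vy_h-\vy^\epsilon\|_{H_h^2(\Omega)}\to 0$ as $h\to 0$ with $S_h[\vy_h]=O(h^{2k-1})\to 0$. Lemma \ref{strong-conv} yields $H_h[\vy_h]\to D^2\vy^\epsilon$ strongly in $L^2$ and hence $B_h[\vy_h]\to B[\vy^\epsilon]$, while Lemma \ref{l:conv_nl} gives $C_h[\vy_h]\to C[\vy^\epsilon]$. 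A diagonal extraction $h=h(\epsilon)\to 0$ combined with continuity $E[\vy^\epsilon]\to E[\vy]$ (which follows from $\vy^\epsilon\to\vy$ in $H^2$ and the uniform bound $\|\nabla\vy^\epsilon\|_{L^\infty}\lesssim 1$ from the isometry) then delivers $\limsup_h E_{h(\epsilon)}[\vy_{h(\epsilon)}]\le E[\vy]$.
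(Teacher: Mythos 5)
Your treatment of parts (i) and (ii) matches the paper's: the oscillation bound $\|\I[\vy_h]-\I[\vy_h](x_T)\|_{L^1(T)}\lesssim h_T\|D_h^2\vy_h\|_{L^2(T)}\|\nabla_h\vy_h\|_{L^2(T)}$, the $L^1$ convergence of the isometry defect, and the combination of Lemma~\ref{weak-conv}, lower semicontinuity, nonnegativity of $S_h$ and Lemma~\ref{l:conv_nl} are exactly what the paper does. (The paper identifies the two limits via strong $L^1$ convergence of $\I[\vy_h]$ to $\I[\vy]$ rather than a weak-duality argument, but the conclusion is the same.)

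Part (iii) is where your proposal has a genuine gap. Your overall strategy — regularize via Hornung, then build a discrete deformation whose gradient at each barycenter $x_T$ equals $\nabla\vy^\epsilon(x_T)$ so that $D_h[\vy_h](x_T)=0$ — is precisely the right idea. But the ``Hermite-type interpolant'' you sketch is over-determined and not a well-defined element: on a triangle with $\mathbb P_2$ (6 scalar DOFs) you prescribe barycenter value (1), barycenter gradient (2), vertex values (3) \emph{and} edge-midpoint values (3), i.e., 9 conditions; for $\mathbb Q_2$ on a quad you propose 11 conditions against 9 DOFs. Even if you trim the conditions to a unisolvent 6 (or 9), you have to check unisolvence and the resulting stability, which you don't. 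The paper sidesteps all of this with a construction you have not found: a genuine \emph{quadratic Taylor expansion} at the barycenter whose Hessian entry is not the pointwise $D^2\vy^\epsilon(x_T)$ (which is not even defined for $H^3$ functions) but the \emph{averaged Hessian} $Q_T[\vy^\epsilon]=\frac{1}{|T|}\int_T D^2\vy^\epsilon$:
\begin{equation*}
R_h[\vw](x) := \vw(x_T) + \nabla\vw(x_T)(x - x_T) + \tfrac12 (x-x_T)^T Q_T[\vw](x - x_T).
\end{equation*}
This is trivially well-defined, invariant on $\mathbb P_1$, and satisfies $\|Q_T[\vw]\|_{L^2(T)}\le|\vw|_{H^2(T)}$, which yields the clean stability $\|D_h^2\vy_h\|_{L^2(\Omega)}\lesssim\|D^2\vy\|_{L^2(\Omega)}$ uniformly in $\epsilon$ and $h$ — a point your interpolant would not give you without invoking the full $H^3$ norm of $\vy^\epsilon$ at each stage. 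Two secondary inaccuracies: Hornung's regularization produces an isometry $\vy^\epsilon\in[H^3(\Omega)]^3$, not $[C^\infty(\overline\Omega)]^3$; and $\vy^\epsilon$ does \emph{not} in general remain in $\A$, since Hornung does not preserve Dirichlet data — the paper never claims it does, and the argument only needs $\vy\in\A$ because the boundary jumps in $S_h$ are measured against $(\vvarphi,\Phi)$, not against $\vy^\epsilon$.
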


\begin{proof}
We prove properties {(i),(ii) and (iii)} separately.

\smallskip\noindent
{
\textbf{(i) Compactness.}
Lemma~\ref{coercivity:E_h_bi} (coercivity of total energy) and \eqref{friedrichs} imply
\[  
\|\vy_h\|_{L^2(\Omega)} + \|\nabla_h\vy_h\|_{L^2(\Omega)} + \|\vy_h\|_{H_h^2(\Omega)} \lesssim 1.
\]
Proceeding as in \cite[Proposition~5.2]{bonito2019dg}, there exists $\vy\in[H^2(\Omega)]^3$ satisfying the Dirichlet boundary conditions in \eqref{admissible} and $\vy_h\to\vy$ in $[L^2(\Omega)]^3$, $\nabla_h\vy_h \to  \nabla\vy$ in $[L^2(\Omega)]^{3\times2}$.

It just remains to prove the isometry constraint $\I[\vy] = I_2$ a.e. in $\Omega$.
To this end, recall that $\I[\vy_h]=\nabla_h\vy_h^T\nabla_h\vy_h$, let $T\in\Th$ and note that
\begin{align*}
\|\I[\vy_h] - I_2 \|_{L^1(T)} & \le \|\I[\vy_h] - \I[\vy_h](x_T) \|_{L^1(T)} + |T| \, \big| \I[\vy_h](x_T) - I_2 \big|
\\
& \lesssim h_T \|D^2_h\vy_h\|_{L^2(T)}\|\nabla_h\vy_h\|_{L^2(T)} + \delta |T|,
\end{align*}
because $\vy_h\in\A_{h,\delta}$ whence $D_h[\vy_h](x_T)=\big| \I[\vy_h](x_T) - I_2 \big|\le\delta$. Adding over
$T$ and employing the uniform boundedness of $\| D_h^2\vy_h \|_{L^2(\Omega)}$ and $\|\nabla_h\vy_h\|_{L^2(\Omega)}$
results in
\[
\|\I[\vy_h] - I_2 \|_{L^1(\Omega)} \lesssim h + \delta \to 0
\quad\textrm{as } h\to0^+.
\]
On the other hand, we see that
\[
\I[\vy_h] - \I[\vy] = \nabla_{h}(\vy_h-\vy)^T\nabla_{h}\vy_h+\nabla\vy^T\nabla_h(\vy_h-\vy)
\]
implies
\begin{align*}
\|\I[\vy_h] - \I[\vy]\|_{L^1(\Omega)} \le\left(\|\nabla\vy\|_{L^2(\Omega)}+\|\nabla_h\vy_h\|_{L^2(\Omega)}\right)\|\nabla_h\vy_h-\nabla\vy\|_{L^2(\Omega)} \to0,
\end{align*}
as $h\to0^+$ because $\|\nabla_h\vy_h\|_{L^2(\Omega)}\lesssim 1$.
This and the triangle inequality lead to $\|\I[\vy] - I_2\|_{L^1(\Omega)}=0$ and consequently
$\I[\vy] = I_2$ a.e. in $\Omega$, as desired.
}
  
\medskip\noindent
\textbf{(ii) lim-inf property.}
In view of Lemma~\ref{weak-conv} (weak convergence of $H_h$), we deduce $H_h[\vy_h]\rightharpoonup D^2 \vy$ in
$\big[L^2(\Omega)\big]^{3\times 2\times 2}$. The lower-semicontinuity of the $L^2$-norm under weak-limits
together {with} the fact that the
stabilization terms in $B_h[\vy_h]$ are positive guarantee that
$$
B[\vy] = \frac12 \int_\Omega |D^2\vy|^2 \leq \liminf \limits_{h\to0^+} B_h[\vy_h].
$$
In addition, Lemma~\ref{l:conv_nl} (convergence of cubic energy) yields $\lim_{h\to0^+} C_h[\vy_h]=C[{\vy}]$,
and altogether gives $E[\vy]\ \le \liminf_{h\to0^+} E_h[\vy_h]$ as asserted.

\medskip\noindent
\textbf{(iii) lim-sup property.}
The difficulty to construct a recovery sequence $\vy_h\in\A_{h,\delta}$ is that the regularity
$\vy\in [H^2(\Omega) \cap W^1_\infty(\Omega)]^3$ is borderline to define pointwise values of $\nabla\vy$
and thus enforce the isometry defect $D_h[\vy_h](x_T)$ at every element barycenter $x_T$. Hence, 
we invoke the regularization procedure of P. Hornung \cite{hornung2008approximating}: given an isometry $\vy \in [H^2(\Omega)]^3$ and $\epsilon>0$, there exists an {\it isometry} $\vy^\epsilon \in [H^3(\Omega)]^3$ such that
\begin{equation}\label{yeps-approx}
\| \vy - \vy^\epsilon \|_{H^2(\Omega)} \lesssim \epsilon, \qquad \| D^2 \vy^\epsilon \|_{L^2(\Omega)} \lesssim
\| D^2 \vy \|_{L^2(\Omega)}.
\end{equation}
As usual, the constants hidden in the symbol $\lesssim$ are independent of $h$ and $\epsilon$.
We now set $\vy_h:=R_h[\vy^\epsilon]$, where the recovery operator $R_h: [H^3(\Omega)]^3 \to [\V^k_h]^3$ is
the following quadratic Taylor expansion about $x_T$ for every $T\in\Th$
\begin{equation}\label{recovery}
  R_h[\vw] (x):=\vw(x_T)+\nabla\vw(x_T)(x-x_T)
  +\frac{1}{2}(x-x_T)^T Q_T[\vw](x-x_T) \quad\forall \, x\in T,
\end{equation}
where $Q_T[\vw]:=\frac{1}{|T|}\int_{T}D^2\vw$. Note that $\nabla \vy_h(x_T)
= \nabla\vy^\epsilon(x_T)$ and $D_h[\vy_h](x_T) = 0$, whence $\vy_h\in\A_{h,0}\subset\A_{h,\delta}$. We next show the two
convergence properties of $\vy_h$ in (ii).

Since $R_h \big|_T$ is invariant over the space $\mathbb{P}_1$ of polynomials of degree $\le1$, we have
\[ 
\vw - R_h[\vw] = (\vw - \vp) - R_h[\vw-\vp]
\quad\forall \, \vp\in[\mathbb{P}_1]^3.
\]
Therefore, combining the stability in $W^1_\infty(T)$ of the linear part of $R_h$ with the
Bramble-Hilbert lemma and the property $\|Q_T[\vw]\|_{L^2(T)} \le |\vw|_{H^2(T)}$, we deduce
\begin{align*}
\|\vw-R_h[\vw]\|_{H^1(T)} &\lesssim h_T \|\nabla(\vw-\vp)\|_{W^1_\infty(T)} + h_T \|Q_T[\vw]\|_{L^2(T)}
\\
& \lesssim h_T^2 \|\vw\|_{H^3(T)} + h_T |\vw|_{H^2(T)} \lesssim  h_T \|\vw\|_{H^3(T)}.
\end{align*}
Notice the presence of the full $H^3$-norm on the right-hand side of the above estimate, which
accounts for possible subdivisions made of quadrilaterals \cite{ciarlet1972interpolation,ern2021finite,bonito2019dg}.
We next square and add over $T\in\Th$ to obtain
\[
\|\vw-R_h[\vw]\|_{L^2(\Omega)} + \|\nabla\vw-\nabla_h R_h[\vw]\|_{L^2(\Omega)} \lesssim h \|\vw\|_{H^3(\Omega)}.
\]
This estimate for $\vw=\vy^\epsilon$, in conjunction with \eqref{yeps-approx}, yields
\[
\|\vy-\vy_h\|_{L^2(\Omega)} + \|\nabla\vy-\nabla_h \vy_h\|_{L^2(\Omega)} \lesssim \epsilon + h \|\vy^\epsilon\|_{H^3(\Omega)},
\]
whence $\|\vy-\vy_h\|_{L^2(\Omega)}\lesssim \epsilon$ provided $h$ is sufficiently small so that
$h \|\vy^\epsilon\|_{H^3(\Omega)} \le \epsilon$. This shows the asserted convergence $\vy_h\to\vy$ in $[L^2(\Omega)]^2$
because $\epsilon$ is arbitrary.

It remains to show the convergence $E_h[\vy_h]\to E[\vy]$ as $h\to0^+$, which in turn implies the
desired lim-sup property. Since $D^2\vy_h=Q_T[\vy^\epsilon]$, we infer that
\[
\|D_h^2\vy_h\|_{L^2(\Omega)}^2 = \sum_{T\in\Th}\|Q_T[\vy^\epsilon]\|_{L^2(T)}^2 \le
\sum_{T\in\Th}\|D^2 \vy^\epsilon\|_{L^2(T)}^2 \lesssim \|D^2 \vy\|_{L^2(\Omega)}^2,
\]
according to \eqref{yeps-approx}. Moreover,
\begin{align*}
\|D_h^2 \vy_h &- D^2\vy\|_{L^2(\Omega)}^2 = \sum_{T\in\Th} \|Q_T[\vy^\epsilon]-D^2\vy\|_{L^2(T)}^2
\\
& \le \sum_{T\in\Th} \|Q_T[\vy^\epsilon]-D^2\vy^\epsilon\|_{L^2(T)}^2 + \|D^2\vy^\epsilon-D^2\vy\|_{L^2(T)}^2
\lesssim h^2 \|\vy^\epsilon\|_{H^3(\Omega)}^2 + \epsilon^2
\end{align*}
shows that $D_h^2\vy_h \to D^2\vy$ and Lemma~\ref{strong-conv} (strong convergence of $H_h$)
gives
\begin{equation*} 
H_h[\vy_h]\to D^2\vy \quad \mbox{strongly in } \,\, [L^2(\Omega)]^{3\times 2\times 2}.
\end{equation*}
An argument similar to \cite[Appendix B and C]{bonito2020ldg-na}, invoking the trace inequality,
yields
\[
S_h[\vy_h]  \lesssim \sum_{T\in\Th}  \|\vy - \vy_h\|_{H^2(T)}^2 \rightarrow 0, \quad \textrm{as} \ h\to0^+
\]
for the stabilization energy $S_h[\vy_h]$ in \eqref{stabilization} and implies convergence
of the bending energy $B_h$ in \eqref{discrete-energy-0}, namely
$
\lim\limits_{h\to0^+} B_h[\vy_h]=B[\vy].
$
Finally, in view of the preceding discussion, we see that the assumptions of
Lemma \ref{l:conv_nl} (convergence of the cubic energy) are valid, whence Lemma \ref{l:conv_nl} 
implies $C_h[\vy_h]\to C[\vy]$ and completes the proof.
\end{proof}

The construction of the recovery sequence in Theorem \ref{Gamma:E_h} {(compactness and $\Gamma$-convergence)}
is closely related to Lemma \ref{l:conv_nl} (convergence
of the cubic energy) and illustrates the crucial interplay between enforcing the isometry defect $D_h[\vy_h]$
at barycenters and the mid-point quadrature rule in the cubic energy $C_h[\vy_h]$.
This, however, limits the accuracy of LDG to that of lowest polynomial degree $k=2$. We leave the design of
an LDG method with formal higher accuracy $k>2$ open. %in this paper.

\begin{cor}[convergence of global minimizers]\label{C:convergence}
  %Let $\vy_h\in\A_{h,\delta}$ be a sequence of functions such that $E_h[\vy_h]$ is uniformly bounded in $h$.
  If $\vy_h{\in\A_{h,\delta}}$ is an almost global minimizer of $E_h$ in the sense that
  \[
   E_h[\vy_h] \le \inf_{\vw_h\in\A_{h,\delta}} E_h[\vw_h] + \sigma
  \]
  where $\sigma, \delta \to 0$ as $h\to0^+$, then $\{\vy_h\}_{h>0}$ is precompact in $[L^2(\Omega)]^3$
  and every cluster point $\vy$ belongs to $\A$ and is a global minimizer of $E$,
  namely $E[\vy]=\inf_{\vw\in\A} E[\vw]$. Moreover, up to a subsequence (not relabeled) the energies
  converge
  \[
   E[\vy] = \lim_{h\to0^+} E_h[\vy_h].
  \]
\end{cor}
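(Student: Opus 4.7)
The plan is the standard $\Gamma$-convergence route, adapted to the almost-minimality hypothesis: first get a uniform energy bound, then invoke compactness, then use lim-inf and lim-sup to pin down the limit as a global minimizer, and finally deduce energy convergence.

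\medskip\noindent
\textbf{Step 1 (uniform energy bound).} Since $\A$ is non-empty, I pick an arbitrary $\vw\in\A$ and apply Theorem \ref{Gamma:E_h}(iii) to obtain a recovery sequence $\vw_h\in\A_{h,\delta}$ with $\vw_h\to\vw$ in $[L^2(\Omega)]^3$ and $\limsup_{h\to 0^+} E_h[\vw_h]\le E[\vw]$. Combined with the almost-minimality $E_h[\vy_h]\le E_h[\vw_h]+\sigma$ and the assumption $\sigma\to 0$, this yields $\limsup_{h\to 0^+} E_h[\vy_h]\le E[\vw]<\infty$, so $E_h[\vy_h]$ is uniformly bounded in $h$.

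\medskip\noindent
\textbf{Step 2 (precompactness).} With the uniform bound in hand, Theorem \ref{Gamma:E_h}(i) (compactness) gives a subsequence (not relabeled) and a limit $\vy\in\A$ with $\vy_h\to\vy$ in $[L^2(\Omega)]^3$ and $\nabla_h\vy_h\to\nabla\vy$ in $[L^2(\Omega)]^{3\times 2}$. A mild point worth checking is the regularity required to invoke the lim-inf in the next step, namely $\vy\in [H^2(\Omega)\cap W^1_\infty(\Omega)]^3$: this is automatic because $\vy\in\A$ implies $\I[\vy]=I_2$ a.e., so $|\partial_i\vy|=1$ a.e. and therefore $\|\nabla\vy\|_{L^\infty(\Omega)}\le\sqrt{2}$.

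\medskip\noindent
\textbf{Step 3 (cluster point is a global minimizer).} Fix an arbitrary competitor $\vz\in\A$ and, by Theorem \ref{Gamma:E_h}(iii), a recovery sequence $\vz_h\in\A_{h,\delta}$ with $\vz_h\to\vz$ in $[L^2(\Omega)]^3$ and $\limsup_{h\to 0^+}E_h[\vz_h]\le E[\vz]$. Using Theorem \ref{Gamma:E_h}(ii) (lim-inf) applied to $\vy_h\to\vy$ together with almost-minimality $E_h[\vy_h]\le E_h[\vz_h]+\sigma$, I chain
\begin{equation*}
E[\vy] \le \liminf_{h\to 0^+} E_h[\vy_h] \le \limsup_{h\to 0^+} \big(E_h[\vz_h]+\sigma\big) \le E[\vz].
\end{equation*}
Since $\vz\in\A$ was arbitrary, $E[\vy]=\inf_{\vw\in\A}E[\vw]$, i.e., $\vy$ is a global minimizer.

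\medskip\noindent
\textbf{Step 4 (convergence of energies).} Specializing the argument of Step 3 to $\vz=\vy$, there exists a recovery sequence $\widetilde\vy_h\in\A_{h,\delta}$ with $\limsup_{h\to 0^+} E_h[\widetilde\vy_h]\le E[\vy]$. Almost-minimality gives $E_h[\vy_h]\le E_h[\widetilde\vy_h]+\sigma$, so $\limsup_{h\to 0^+}E_h[\vy_h]\le E[\vy]$, and combined with the lim-inf bound $E[\vy]\le\liminf_{h\to 0^+}E_h[\vy_h]$ from Step 3 I conclude $\lim_{h\to 0^+}E_h[\vy_h]=E[\vy]$ along the extracted subsequence.

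\medskip\noindent
The real work has all been done in Theorem \ref{Gamma:E_h}; the only point that requires any thought beyond a pure abstract $\Gamma$-convergence template is the brief regularity verification in Step 2 that lets me apply the lim-inf property, and the bookkeeping of the extra slack $\sigma$, neither of which is a genuine obstacle.
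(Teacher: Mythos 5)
Your proof is correct and follows exactly the standard $\Gamma$-convergence template that the paper itself invokes: the paper omits the proof of Corollary~\ref{C:convergence}, stating that it "readily follows from Theorem~\ref{Gamma:E_h}" and deferring to the references, and your Steps 1--4 reproduce that argument faithfully, including the necessary check that any compactness limit $\vy\in\A$ automatically lies in $[H^2(\Omega)\cap W^1_\infty(\Omega)]^3$ so that the lim-inf property applies. The only cosmetic omission is in Step~1: the chain $\limsup_{h\to 0^+}E_h[\vy_h]\le E[\vw]<\infty$ gives only an upper bound, and you should invoke Lemma~\ref{coercivity:E_h_bi} (coercivity of total energy) together with the boundedness of $\delta$ to supply the uniform lower bound before declaring $E_h[\vy_h]$ "uniformly bounded" as required by Theorem~\ref{Gamma:E_h}(i).
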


We omit the proof of Corollary \ref{C:convergence}, which readily follows from Theorem \ref{Gamma:E_h}
{(compactness and $\Gamma$-convergence),}
and refer instead to \cite{bartels2013approximation,bartels2015numerical,bartels2017bilayer,bonito2019dg}
for details.

%%%%%%%%%%%%%%%%%%%%%%%%%%%%%%%%%%%%%%%%%%%%%%%%%%%%%%%%%%%%%%%%%%%%%%%%%%%%%%%%%%%%%%%%
\section{Bilayer model with creases}\label{S:creases}
%%%%%%%%%%%%%%%%%%%%%%%%%%%%%%%%%%%%%%%%%%%%%%%%%%%%%%%%%%%%%%%%%%%%%%%%%%%%%%%%%%%%%%%%

Bartels, Bonito and Hornung have recently developed a reduced single layer model that allows for folding across creases  \cite{BBH2021Folding}. The resulting two-dimensional model hinges on a general hyperelastic material description with appropriate scaling conditions on the energy, and consists of a piecewise nonlinear Kirchhoff plate bending model with a continuity condition at the creases. For a prescribed Lipschitz curve  $\mathcal C$ intersecting the boundary of $\Omega$ transversally, the modified bending energy of \cite{BBH2021Folding} reads
$$
\widetilde B[\vy]:= \frac 1 2 \int_{\Omega \setminus \mathcal C} \big| \II[\vy] \big|^2
= \frac12 \int_{\Omega \setminus \mathcal C} | D^2\vy |^2
$$
for deformations $\vy \in [H^2(\Omega \setminus \mathcal C) \cap W^{1,\infty}(\Omega)]^3$ satisfying the isometry constraint $\I[\vy] = I_2$ along with possible  boundary conditions. Properly designed creases allow for flapping mechanisms
upon actuation at the boundary which are of interest in engineering and medicine \cite{BBH2021Folding}.

In this section we explore a similar modification of the elastic energy \eqref{minimization}
\begin{equation}\label{eq:modified-energy}
\widetilde{E}[\vy]:= \frac 1 2 \int_{\Omega \setminus \mathcal C} \big| \II[\vy] - Z \big|^2,
\end{equation}
but without justification from 3d hyperelasticity. Therefore, we leave open the question
whether this energy is the appropriate $\Gamma$-limit for bilayer materials. 
We also modify the admissible set to be
\[
\widetilde{\A}:=\big\{\mathbf{y}\in[H^2(\Omega\setminus\Ccal)\cap W^1_\infty(\Omega)]^3:\quad\I[\vy]=I_2\ \text{ in }\Omega,\quad \mathbf{y}=\vvarphi, \ \nabla\mathbf{y}=\Phi\text{ on }\Gamma^D\big\}.
\]
Our goal is, instead,
to investigate the relation between \eqref{eq:modified-energy} and its fully discrete counterpart,
and demonstrate computationally the crucial role of spontaneous curvature $Z$ to produce plate folding
without actuation via boundary conditions.

We extend our LDG method to account for creases as in \cite{BBH2021Folding}. We consider iso-parametric
partitions $\mathcal T_h$ made of possibly curved elements, i.e. the mapping $F_T$ used to define the finite element space $\V_h^k$ locally is $[\mathbb Q_2]^2$ instead of $[\mathbb Q_1]^2$ (or $[\mathbb P_2]^2$ instead of $[\mathbb P_1]^2$). We further assume that the crease $\mathcal C$ is exactly matched by $\Th$:
\begin{equation}\label{eq:crease}
  \textit{$\mathcal C$ is made of piecewise quadratic edges $e_1,...,e_J \in \mathcal E_h$.}
\end{equation}
This geometric
assumption is restrictive but instrumental for the theory below. Dealing with more general creases $\mathcal C$, just
interpolated by $\mathcal E_h$, is important and the subject of current research; we refer to
\cite[Section 4.4]{BBH2021Folding} for some discussion.

The distributional derivative of $\vy\in[H^2(\Omega\setminus\Ccal) \cap W^1_\infty(\Omega)]^3$ reads
\[
D^2 \vy = \widetilde{D}^2 \vy + [\nabla\vy]\otimes\vn \, \delta_\Ccal,
\]
where $\widetilde{D}^2 \vy$ stands for the absolutely continuous part of $D^2 \vy$,
or restriction of $D^2 \vy$ to $\Omega\setminus\Ccal$ that happens to be $L^2$,
while $[\nabla\vy]\otimes\vn \, \delta_\Ccal$ is the singular part supported on $\Ccal$ and $\vn$ is a unit normal vector to $\Ccal$.
The first issue to tackle is the construction of a discrete Hessian $\widetilde{H}_h[\vy_h]$ that allows
for folding across $\Ccal$ and mimics $\widetilde{D}^2 \vy$.
As in \cite{BBH2021Folding}, we replace the global lift $\mathcal{R}_h$ in \eqref{E:global-lifting} by
\[
\widetilde{\mathcal{R}}_h := \sum_{e \in \mathcal E_h^a \setminus \{ e_1,...,e_J \}} r_e,
\]
where $\{e_j\}_{j=1}^J$ are defined in \eqref{eq:crease}, and let the {\it modified discrete Hessian} be
\[
\widetilde{H}_h[\vy_h] := D_h^2 \vy_h - \widetilde{\mathcal{R}}_h([\nabla_h\vy_h]) + \mathcal{B}_h([\vy_h]).
\]
We likewise replace \eqref{def:H2norm} by the modified mesh-dependent form
$\langle \cdot,\cdot \rangle_{\widetilde{H}_h^2}$
\begin{align*}
\langle \vv_h,\vw_h \rangle_{\widetilde{H}_h^2} &:=  (D^2_h\vv_h,D^2_h\vw_h)_{L^2(\Omega)}
  \\& +(\h^{-1}\jump{\nabla_h\vv_h},\jump{\nabla_h\vw_h})_{L^2(\Gh^a\setminus\Ccal)}
  +(\h^{-3}\jump{\vv_h},\jump{\vw_h})_{L^2(\Gh^a)}.
\end{align*}
In essence, the ability for the plates to fold freely across $\mathcal C$ is reflected in
the absence of all the contributions related to $[\nabla \vy_h]$ across $\mathcal C$. This is
the key to the following lemma whose proof follows along the lines of \cite[Appendix B]{bonito2020ldg-na}
and is thus omitted.

\begin{lemma}[convergence of $\widetilde{H}_h$]\label{L:convergence-modifiedH}
  Let the crease $\Ccal$ satisfy \eqref{eq:crease}. Then there holds
  \begin{enumerate}[label=(\roman*),itemindent=0ex]
  \item Weak convergence:
    If $k \geq 2$ and $\vv_h\in\V_h^k(\vvarphi,\Phi)$ satisfies $||\vv_h||_{H_h^2(\Omega)}\lesssim 1$ and $\vv_h\to\vv\in[H^2(\Omega\setminus\Ccal)\cap H^1(\Omega)]^3$ in $[L^2(\Omega)]^3$ as $h\rightarrow 0^+$, then we have
    \begin{equation*}
    \widetilde{H}_h[\vv_h]\rightharpoonup \widetilde{D}^2 \vv \quad \mbox{in } \left[L^2(\Omega)\right]^{3\times 2\times 2} \quad \mbox{as } h\rightarrow 0^+.
   \end{equation*}

  \item Strong convergence:
  Let $\vv\in[H^2(\Omega\setminus\Ccal)]^3$ be any function such that $\vv=\vvarphi$ and $\nabla\vv=\Phi$ on $\Gamma^D$. Moreover,  let $\vv_h\in\V^k_h(\vvarphi,\Phi)$ satisfy
\begin{equation*}
  \|D^2\vv_h\|_{L^2(T)}\lesssim\|\vv\|_{H^2(T)}\,\,\forall T\in\Th,
  \quad
  \sum_{T\in\Th}\|\vv_h-\vv\|_{H^2(T)}^2\to0\text{ as }h\to0^+.
\end{equation*}
Then we have as $h\rightarrow 0^+$
\begin{equation*}
\widetilde{H}_h[\vv_h]\to \widetilde{D}^2\vv \quad \mbox{strongly in } \,\, [L^2(\Omega)]^{3\times 2\times 2}.
\end{equation*} 	

  \end{enumerate}
\end{lemma}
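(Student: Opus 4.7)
The plan is to adapt the arguments of Lemma \ref{weak-conv} and Lemma \ref{strong-conv}, whose details are in \cite[Appendix B]{bonito2020ldg-na}, tracking the single modification caused by the removal of the lifts $r_e$ along the crease edges $e_1,\dots,e_J$. For part (i), I first test $\widetilde{H}_h[\vv_h]$ against $\phi\in[C_0^\infty(\Omega)]^{3\times 2\times 2}$ approximated by its local $L^2$ projection $\phi_h$ onto the broken polynomial space of degree $\max(l_1,l_2)$. Combining elementwise integration by parts on $(D_h^2\vv_h,\phi_h)_{L^2(\Omega)}$ with the defining identities \eqref{def:lift_re} and \eqref{def:lift_be} cancels every boundary contribution along the non-crease active edges and yields the identity
\[
\int_\Omega \widetilde{H}_h[\vv_h]:\phi_h = \int_\Omega \vv_h \cdot \di\,\di\,\phi_h + \sum_{j=1}^J \int_{e_j}\jump{\nabla_h\vv_h}\avrg{\phi_h}\cdot\vn_{e_j},
\]
where the crease residual is exactly what $\widetilde{\mathcal{R}}_h$ fails to lift. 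A standard $L^2$-projection estimate together with the uniform bound $\|\vv_h\|_{H_h^2(\Omega)}\lesssim 1$ then allows me to replace $\phi_h$ by $\phi$ up to a quantity vanishing as $h\to 0^+$.

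Passing to the limit in the bulk term, the $L^2$-convergence $\vv_h\to\vv$ and the distributional identity for $\vv\in[H^2(\Omega\setminus\Ccal)\cap H^1(\Omega)]^3$ give
\[
\int_\Omega \vv_h\cdot\di\,\di\,\phi \to \int_\Omega \widetilde{D}^2\vv:\phi + \int_\Ccal \jump{\nabla\vv}\otimes\vn:\phi\,d\mathcal{H}^1.
\]
For the crease residual, the geometric assumption \eqref{eq:crease} that $\Ccal$ is exactly matched by mesh edges, together with a compactness argument based on $\|\vv_h\|_{H_h^2(\Omega)}\lesssim 1$, allows me to extract, up to a subsequence, weak $L^2(\Ccal)$-convergence of the one-sided traces of $\nabla_h\vv_h$ to those of $\nabla\vv$. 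With the appropriate orientation of $\vn_{e_j}$, the residual converges to $-\int_\Ccal \jump{\nabla\vv}\otimes\vn:\phi\,d\mathcal{H}^1$, which cancels exactly the singular contribution above. Combined with a uniform $L^2$-bound for $\widetilde{H}_h[\vv_h]$ that mirrors \eqref{eqn:hessian_upper}, this delivers $\widetilde{H}_h[\vv_h]\rightharpoonup \widetilde{D}^2\vv$.

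Part (ii) follows the strategy of the proof of Lemma \ref{strong-conv}. The hypothesis $\sum_T \|\vv_h - \vv\|_{H^2(T)}^2\to 0$, combined with $\|D^2\vv_h\|_{L^2(T)}\lesssim\|\vv\|_{H^2(T)}$, upgrades the stability estimates for $r_e$ and $b_e$ to strong $L^2(\Omega)$-convergence of the lifts $\widetilde{\mathcal{R}}_h(\jump{\nabla_h\vv_h})$ and $\mathcal{B}_h(\jump{\vv_h})$. Moreover, strong $H^2$-convergence on each side of $\Ccal$ yields strong trace convergence of $\nabla_h\vv_h$ on the crease edges, so that the crease residual converges strongly and the preceding cancellation persists. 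Combined with $D_h^2\vv_h\to\widetilde{D}^2\vv$ in $[L^2(\Omega)]^{3\times 2\times 2}$, which is immediate from the second hypothesis, this delivers $\widetilde{H}_h[\vv_h]\to\widetilde{D}^2\vv$ strongly.

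The main obstacle is the identification of the limit of the crease residual in part (i). In the classical Lemma \ref{weak-conv}, every jump boundary contribution is absorbed by $\mathcal{R}_h$, leaving no residual; here, matching $\sum_j\int_{e_j}\jump{\nabla_h\vv_h}\avrg{\phi}\cdot\vn_{e_j}$ with the singular part $\int_\Ccal\jump{\nabla\vv}\otimes\vn:\phi\,d\mathcal{H}^1$ of $D^2\vv$ is what allows $\widetilde{D}^2\vv$, rather than the full distributional $D^2\vv$, to emerge as the weak limit. Keeping signs and orientations consistent with the convention on $\vn_{e_j}$ throughout this cancellation is the delicate point of the argument.
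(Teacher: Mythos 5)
The strategy you use for part (i) — test $\widetilde{H}_h[\vv_h]$ against a smooth $\phi$, integrate by parts through the lift identities \eqref{def:lift_re}--\eqref{def:lift_be}, isolate the crease residual, and argue that it cancels the singular part $[\nabla\vv]\otimes\vn\,\delta_{\mathcal{C}}$ of $D^2\vv$ — is the right conceptual picture, but the critical step is left as an assertion. You claim that "a compactness argument based on $\|\vv_h\|_{H_h^2(\Omega)}\lesssim 1$" yields weak $L^2(\mathcal{C})$-convergence of the one-sided traces of $\nabla_h\vv_h$ to those of $\nabla\vv$. For DG functions controlled only in a broken $H^2$-type norm, convergence of traces on an interior curve is not automatic; it requires, e.g., replacing $\nabla_h\vv_h|_{\Omega_i}$ by the conforming recovery $\Pi_h(\nabla_h\vv_h)$ subdomain-wise, using boundedness in $H^1(\Omega_i)$ and compactness of the trace map into $L^2(\mathcal{C})$, and then controlling the defect $\nabla_h\vv_h - \Pi_h(\nabla_h\vv_h)$ on $\mathcal{C}$ via the jump penalty. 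Without this, the identification of the residual's limit — the one place where the crease structure enters — is unproved, and since the entire adaptation hinges on it, this is a genuine gap rather than a routine omission.

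Note also that the route the paper points to is likely simpler and never touches the singular part. The modified Hessian $\widetilde{H}_h$ drops the $r_e$-lifts on all crease edges, so on each component $\Omega_i$ of $\Omega\setminus\mathcal{C}$ it coincides, up to the $\mathcal{B}_h$-contributions of $\jump{\vv_h}$ on $\mathcal{C}$ (which vanish because $\vv\in H^1(\Omega)$), with the free-boundary discrete Hessian on $\Omega_i$ with $\mathcal{C}\cap\partial\Omega_i$ treated as a free boundary — exactly the setting of \cite[Appendix B]{bonito2020ldg-na}. Applying the free-boundary lemma on each $\Omega_i$ gives $\widetilde{H}_h[\vv_h]|_{\Omega_i}\rightharpoonup D^2(\vv|_{\Omega_i})=\widetilde{D}^2\vv|_{\Omega_i}$ directly, and the two subdomain results glue. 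This is what "following the lines of Appendix B" most plausibly means, and it avoids the trace question altogether. Two smaller points: the identity you write for $\int_\Omega\widetilde{H}_h[\vv_h]:\phi_h$ omits the edge terms involving $\jump{\phi_h}\avrg{\nabla_h\vv_h}$ and the analogous $\jump{\vv_h}\avrg{\di\phi_h}$ contributions — they vanish in the limit by $L^2$-projection estimates, so the conclusion survives, but the identity as stated is incomplete. And the hypothesis $\|\vv_h\|_{H_h^2(\Omega)}\lesssim 1$ must be read as the modified norm $\|\vv_h\|_{\widetilde{H}_h^2}$: the unmodified norm penalizes $\jump{\nabla_h\vv_h}$ on crease edges with weight $\h^{-1}$, so a uniform bound would force $[\nabla\vv]=0$ on $\mathcal{C}$, contradicting the presence of a genuine crease and making the cancellation you are trying to establish vacuous.
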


We are now ready to introduce the LDG approximation of $\widetilde{E}[\vy]$ in \eqref{eq:modified-energy}, namely
$$
\widetilde{E}_h[\vy_h] := \widetilde{B}_h[\vy_h] + \widetilde{C}_h[\vy_h],
$$
where 
$$
\widetilde{B}_h[\vy_h]:= \frac 1 2 \int_\Omega | \widetilde{H}_h[\vy_h]|^2 +\gamma_1\|\h^{-\frac12}[\nabla_h\mathbf{y}_h]\|_{L^2(\Gh^a \setminus \mathcal C)}^2+\gamma_0\|\h^{-\frac32}[\mathbf{y}_h]\|_{L^2(\Gh^a)}^2,
$$
and
$$
\widetilde{C}_h[\vy_h]:=\sum_{i,j=1}^2\sum_{T\in\Th}|T| \, \overline{H}_h[\vy_h]_{ij}\cdot\big(\partial_1\vy_h\times\partial_2\vy_h\big)(x_T) \, Z_{ij},
$$
with $\overline{H}_h[\vy_h]|_T:= \frac{1}{|T|}\int_T \widetilde{H}_h[\vy_h]$. Lemmas \ref{L2bound-modified} and
\ref{weakconv-modified} are valid for $\overline{H}_h[\vy_h]$, as well as Lemma \ref{l:conv_nl} (convergence
of cubic energy) and Lemma \ref{coercivity:E_h_bi} (coercivity of total energy).

It remains to examine the convergence of the discrete global minimizers towards the continuous global minimizers.
Assume that the crease $\Ccal$ splits $\Omega$ into two disjoint sets $\Omega_1$ and $\Omega_2$.
Since Hornung's regularization procedure \cite{hornung2008approximating} cannot guarantee general Dirichlet
boundary conditions, it is not clear how to regularize in $\Omega_1$ and $\Omega_2$ functions that belong to
$[H^2(\Omega\setminus\Ccal)\cap W^1_\infty(\Omega)]^3$ and yet maintain the location of the crease $\Ccal$,
namely obtain an isometry in $[H^3(\Omega\setminus\Ccal)\cap W^1_\infty(\Omega)]^3$. 
Another obstruction stems from the use of curved elements necessarily for the subdivisions to match the crease. 
When using polynomial mappings from the reference to the physical elements, the resulting finite element functions are not necessarily polynomial in the physical element, thereby ruling out the construction of the recovery sequence proposed to guarantee the $\limsup$ property; see Theorem~\ref{Gamma:E_h}(ii). 

We circumvent these issues by requiring slightly more smoothness on \emph{one} of the global minimizers $\vy$, which in turn allows for a different, more generic, construction of its recovery sequence. 
Because the additional regularity cannot be derived from our $\Gamma$-convergence theory, we assume the existence of a global minimizer $\vy^*\in\widetilde{\A}$ of $\widetilde{E}$ with the
following property
\begin{equation}\label{eq:extra-regularity}
\vy^*|_{\Omega_i}  \in C^1(\overline{\Omega}_i), \qquad i=1,2.
\end{equation}
Note that the above assumption is consistent with practical configurations. 
We also point out that this regularity assumption and the fact that the subdivision matches the crease entail the existence of a modulus of smoothness $\omega$ so that
\begin{equation}\label{e:modulus_smooth}
  | \nabla \vy^*(x) - \nabla \vy^*(z) | \leq \omega( h_T)
  \quad\forall\, x,z\in T, \,\, \forall\, T \in  \mathcal T_h,
\end{equation}
with $\omega(s) \to 0$ as $s\to 0^+$. 

The construction of the recovery sequence for deformations satisfying the additional regularity \eqref{eq:extra-regularity} is then based on a piecewise averaged Taylor polynomial. The latter does not preserve the isometry constraint pointwise but \eqref{eq:extra-regularity} allows for control of the isometry defect.

Before embarking on the proofs, we recall a useful result on the averaged Taylor polynomial \cite{brenner2007mathematical} defined on the reference element $\widehat T$ (see Section~\ref{subsec:broken}). Until the end of this section, we consider the case when the reference element is a square and $\mathbb Q_k$ finite element functions are used.
The case where $\widehat T$ is the unit simplex is somewhat simpler and can be dealt with similarly. Let $\widehat B$ be a ball centered at the barycenter of $\widehat T$ such that its closure is contained in $\widehat T$ and $\widehat \zeta$ be a cut-off function with unit mass supported on the closure of $\widehat B$. For $\widehat w \in L^1(\widehat T)$ let
\begin{equation}\label{averaged-taylor}
Q[\widehat w](\hat x) := \sum_{|\alpha|_{\infty} \leq 2} \int_{\widehat B}\frac{1}{\alpha!} \widehat D^\alpha  \widehat w(\hat z) (\hat x - \hat z)^\alpha \widehat \zeta(\hat z) d \hat z \in \mathbb Q_2,
\end{equation}
be the averaged Taylor polynomial
where $\alpha := (\alpha_1,\alpha_2)$ is a multi-index with non-negative integers $\alpha_1, \alpha_2$ and $|\alpha|_\infty := \max\{\alpha_1,\alpha_2\}$. We recall the following useful properties of $Q$ and refer to \cite{brenner2007mathematical} for additional details: $Q$  preserves $\mathbb Q_2$ on $\widehat{T}$
\begin{equation}\label{e:Q_preserve_q2}
Q[\widehat p] = \widehat p, \qquad \widehat p \in \mathbb Q_2,
\end{equation}
is stable 
\begin{equation}\label{e:Q_stability}
\| Q[\widehat w] \|_{W^k_\infty(\widehat T)} \lesssim \| \widehat w \|_{L^1(\widehat B)}, \qquad \forall k\in \mathbb{N}
\end{equation}
and convergent
\begin{equation}\label{e:Q_approx}
| \widehat w - Q[\widehat w] |_{H^k(\widehat T)} \lesssim \left(\sum_{i=1}^2 \| \partial^3_{\widehat x_i} \widehat w \|_{L^2(\widehat T)}^2\right)^{1/2}, \qquad 0\le k<3.
\end{equation}

We next discuss estimates for isoparametric mappings $F_T:\widehat{T}\to T$ between the reference element $\widehat{T}$ and $T\in\Th$ so that $F_T\in[\mathbb{Q}_2]^2$. They establish relationship between norms on $\widehat T$ and $T$, as well as provide an interpolation estimate in $[\V_h^2]^3$. In fact, for $v \in H^2(T)$ and $\widehat{v}=v \circ F_T \in H^2(\widehat{T})$, there holds
\begin{equation}\label{e:L2_map_back}
\|\widehat{v}\|_{L^2(\widehat T)} \approx  h_T^{-1}\| v \|_{L^2(T)}, \quad \|\widehat{v}\|_{L^{\infty}(\widehat T)} \approx  \| v \|_{L^{\infty}(T)}
\end{equation}
\begin{equation}\label{e:nabla_map_back}
\| \widehat \nabla \widehat{v} \|_{L^2(\widehat T)} \approx  \|\nabla v \|_{L^2(T)},\quad \| \widehat \nabla \widehat{v} \|_{L^{\infty}(\widehat T)} \approx h_T \|\nabla v \|_{L^{\infty}(T)},
\end{equation}
\begin{equation}\label{e:D2_map_back}
\| \widehat D^2 \widehat{v} \|_{L^2(\widehat T)} \lesssim h_T \|D^2 v \|_{L^2(T)} + \| \nabla v \|_{L^2(T)},
\end{equation}
\begin{equation}\label{e:D2_map_back_2}
\| D^2 v \|_{L^2(T)} \lesssim h_T^{-1} \|\widehat{v}\|_{H^2(\widehat T)},
\end{equation}
whence for $m=0,1,2$
\begin{equation}\label{Hm-est}
|v|_{H^m(T)} \lesssim h_T^{1-m} \| \widehat{v} \|_{H^2(\widehat{T})}.
\end{equation}
Moreover if $v \in H^3(T)$, we further obtain
\begin{equation}\label{e:D3_map_back}
\| \widehat D^3 \widehat{v}\|_{L^2(\widehat T)} \lesssim h_T^2 \| D^3 v \|_{L^2(T)} + h_T \|D^2 v \|_{L^2(T)} + \| \nabla v \|_{L^2(T)}.
\end{equation}
Note that the first four estimates are discussed and proved in \cite[Appendix]{bonito2019dg}, while one can extend the proof of \eqref{e:D2_map_back} to show \eqref{e:D3_map_back}. Additionally, as in \cite[Lemma A.4]{bonito2019dg}, the local Lagrange interpolant $I_h w \in [\mathbb V_h^2]^3$ for any $w \in H^3(T)$ satisfies the estimate
\begin{equation}\label{e:approx_Ih}
| w - I_h w |_{H^m(T)} \lesssim h^{3-m} \| v \|_{H^3(T)},
\end{equation}
for $0\le m\le 3$.

The next lemma describes the modified $\limsup$ property which hinges on the averaged Taylor polynomial \eqref{averaged-taylor}; compare with {Theorem~\ref{Gamma:E_h}(iii).}

\begin{lemma}[$\limsup$ property with creases]\label{l:limsup_crease}
Let $\vy^* \in \widetilde\A$ satisfy the regularity assumption \eqref{eq:extra-regularity} and let $\omega$ be the modulus of smoothness in \eqref{e:modulus_smooth}. There is a constant $c$ and $\vy_h^* \in \mathbb A_{h,c\omega(h)}$ such that $\vy_h^*\to \vy^*$ in $[L^2(\Omega)]^3$ and $\lim_{h\to 0^+} \widetilde E_h[\vy_h] = \widetilde E[\vy^*]$.
\end{lemma}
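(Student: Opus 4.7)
Since the subdivision matches the crease by \eqref{eq:crease}, every element $T\in\Th$ lies entirely in one closed subdomain $\overline{\Omega}_i$, and the restriction $\vy^*|_{\overline T}$ inherits $C^1$ regularity from \eqref{eq:extra-regularity} together with $H^2(T)$ regularity. The plan is to build the recovery sequence piecewise via the averaged Taylor polynomial on the reference element. Setting $\widehat{\vy}^*:=\vy^*|_{\overline{\Omega}_i}\circ F_T$ on $\widehat T$, define
\[
\vy_h^*|_T := Q[\widehat{\vy}^*]\circ F_T^{-1}\in[\V_h^k]^3,
\]
which is legitimate because $Q[\widehat{\vy}^*]\in[\mathbb Q_2]^3$ and $k\ge 2$. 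Essential Dirichlet conditions need not be enforced on $\vy_h^*$ since the scheme imposes them weakly through Nitsche's terms.

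\textbf{Step 1 (isometry defect at barycenters).} This is the main obstacle. Let $\widehat p\in[\mathbb P_1]^3$ be the affine Taylor polynomial of $\widehat{\vy}^*$ at the reference barycenter $\widehat x_T$. Since $\mathbb P_1\subset\mathbb Q_2$, the preservation property \eqref{e:Q_preserve_q2} yields $Q[\widehat p]=\widehat p$, whence
\[
\widehat\nabla Q[\widehat{\vy}^*](\widehat x_T) - \widehat\nabla\widehat{\vy}^*(\widehat x_T) = \widehat\nabla Q[\widehat{\vy}^* - \widehat p](\widehat x_T).
\]
Combining the $W^{1,\infty}$--$L^1$ stability \eqref{e:Q_stability} with the Taylor remainder formula and the modulus of smoothness \eqref{e:modulus_smooth} (pulled back to $\widehat T$ using the scalings $|DF_T|\lesssim h_T$ and the standard isoparametric control on $D^2 F_T$) yields $\|\widehat{\vy}^* - \widehat p\|_{L^\infty(\widehat T)}\lesssim h_T\,\omega(h_T)$. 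Pushing back to $T$ through $|[DF_T(\widehat x_T)]^{-1}|\lesssim h_T^{-1}$, the chain rule identity $\nabla\vy_h^*(x_T)-\nabla\vy^*(x_T)=\big(\widehat\nabla Q[\widehat{\vy}^*](\widehat x_T)-\widehat\nabla\widehat{\vy}^*(\widehat x_T)\big)[DF_T(\widehat x_T)]^{-1}$ gives
\[
\big|\nabla\vy_h^*(x_T) - \nabla\vy^*(x_T)\big|\lesssim \omega(h_T).
\]
Because $\nabla\vy^*$ is continuous on $\overline{\Omega}_i$, the pointwise identity $\nabla\vy^*(x_T)^T\nabla\vy^*(x_T)=I_2$ holds, and the triangle inequality delivers $D_h[\vy_h^*](x_T)\lesssim\omega(h_T)\le\omega(h)$. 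Consequently $\vy_h^*\in\A_{h,c\omega(h)}$ for a suitable $h$-independent constant $c$.

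\textbf{Step 2 (strong convergence of $\vy_h^*$ and $\widetilde H_h[\vy_h^*]$).} The stability of $Q$ on $\widehat T$ combined with the scalings \eqref{e:L2_map_back}--\eqref{e:D2_map_back_2} gives $\|D^2\vy_h^*\|_{L^2(T)}\lesssim\|\vy^*\|_{H^2(T)}$. A density argument in $[H^2(\Omega\setminus\Ccal)]^3$, combined with the approximation estimate \eqref{e:Q_approx} applied to smooth approximants, provides $\sum_{T\in\Th}\|\vy_h^* - \vy^*\|_{H^2(T)}^2\to 0$, and in particular $\vy_h^*\to\vy^*$ in $[L^2(\Omega)]^3$ and $\nabla_h\vy_h^*\to\nabla\vy^*$ in $[L^2(\Omega)]^{3\times 2}$. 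Lemma~\ref{L:convergence-modifiedH}(ii) then guarantees $\widetilde H_h[\vy_h^*]\to\widetilde D^2\vy^*$ strongly in $[L^2(\Omega)]^{3\times 2\times 2}$.

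\textbf{Step 3 (energy convergence).} Strong $L^2$-convergence of $\widetilde H_h[\vy_h^*]$ yields $\tfrac12\int_\Omega|\widetilde H_h[\vy_h^*]|^2\to\tfrac12\int_\Omega|\widetilde D^2\vy^*|^2$, while the trace-based argument of \cite[Appendices B, C]{bonito2020ldg-na} shows that the stabilization terms satisfy $S_h[\vy_h^*]\lesssim\sum_T\|\vy^*-\vy_h^*\|_{H^2(T)}^2\to 0$. With the uniform bound $\|\vy_h^*\|_{H_h^2(\Omega)}\lesssim 1$, the convergence of $\vy_h^*$ and $\nabla_h\vy_h^*$, and the constraint $\vy_h^*\in\A_{h,c\omega(h)}$ established in Step~1, the hypotheses of Lemma~\ref{l:conv_nl} (convergence of cubic energy) are met, so $\widetilde C_h[\vy_h^*]\to C[\vy^*]$. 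Adding the three contributions yields $\widetilde E_h[\vy_h^*]\to\widetilde E[\vy^*]$, which completes the proof. The decisive step is Step~1, where the interaction between the curved isoparametric map $F_T\in[\mathbb Q_2]^2$, the pointwise isometry identity, and the $C^1$-only regularity \eqref{eq:extra-regularity} forces us to extract the convergence rate directly from the modulus $\omega$; this is precisely the place where the extra regularity is indispensable.
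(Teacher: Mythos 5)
Your plan is essentially the paper's: both build $\vy_h^*|_T := Q[\vy^*\circ F_T]\circ F_T^{-1}$, bound the isometry defect at barycenters using the modulus $\omega$ from \eqref{e:modulus_smooth}, obtain broken $H^2$-stability and convergence via density, and then invoke Lemma~\ref{L:convergence-modifiedH}(ii) and Lemma~\ref{l:conv_nl}. The one place you diverge is in Step~1: you subtract the \emph{reference-space} affine Taylor polynomial $\widehat p\in[\mathbb P_1]^3$ of $\widehat{\vy}^*$ at $\widehat x_T$, whereas the paper subtracts the \emph{physical-space} affine Taylor polynomial $\vp(x)=\vy^*(x_T)+\nabla\vy^*(x_T)(x-x_T)$ and exploits that $\widehat\vp:=\vp\circ F_T\in[\mathbb Q_2]^3$, so $Q[\widehat\vp]=\widehat\vp$ by \eqref{e:Q_preserve_q2}. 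Your choice forces you to differentiate $\widehat{\vy}^*=\vy^*\circ F_T$ twice, which produces $D^2 F_T$ terms: the Taylor remainder bound is actually $\|\widehat{\vy}^*-\widehat p\|_{L^\infty(\widehat T)}\lesssim h_T\,\omega(h_T)+h_T^2\|\nabla\vy^*\|_{L^\infty(T)}$, not $\lesssim h_T\,\omega(h_T)$ as you claim, because $|DF_T(\widehat z)-DF_T(\widehat x_T)|\lesssim h_T^2$ is an independent contribution. After pulling back, this gives $|\nabla\vy_h^*(x_T)-\nabla\vy^*(x_T)|\lesssim\omega(h_T)+h_T$, not $\lesssim\omega(h_T)$. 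This is harmless for the conclusion (replace $\omega(h)$ by $\omega(h)+h$, still a modulus tending to zero), but it is an imprecision in your stated bound. The paper's choice of $\vp$ in physical coordinates avoids the issue entirely, since $\widehat\vp\in\mathbb Q_2$ is invariant under $Q$ and the map's curvature never enters; you then land directly on $\|\vy^*-\vp\|_{L^\infty(T)}\le h_T\,\omega(h_T)$ and $\|\nabla(\vy^*-\vp)\|_{L^\infty(T)}\le\omega(h_T)$ without a spurious $h_T$ term.
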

\begin{proof}
As usual, the hat symbol denotes quantities defined on the reference element $\widehat{T}$. 
Let  $\vy_h^*  \in [\mathbb V_h^2]^3$ be defined locally by
$$
\vy_h^*|_T := \widehat \vy_h^*|_T \circ F_T^{-1}, \quad \widehat \vy_h^*|_T := Q[\vy^* \! \circ F_T]
\quad\forall \, T\in \mathcal T_h,
$$
where $Q$ is given in \eqref{averaged-taylor} and is applied component-wise.
Note that by construction we indeed have $\vy_h^*\in [\mathbb V_h^2]^3$.
The rest of the proof consists of 4 steps.

\medskip
\textit{Step (i): isometry defect}.
We claim the intermediate estimates
\begin{equation}\label{rec:intermediate_estim}
\| \nabla \vy_h^*\|_{L^\infty(T)} \lesssim \| \nabla \vy^* \|_{L^\infty(T)}, \quad \| \nabla (\vy^* - \vy_h^*) \|_{L^\infty(T)} \lesssim \omega(h_T) \quad\forall \, T\in \mathcal T_h.
\end{equation}
To show the first estimate, we use \eqref{e:nabla_map_back} and \eqref{e:Q_preserve_q2} to write
$$
\| \nabla \vy_h^*\|_{L^\infty(T)} \lesssim h_T^{-1} \| \widehat \nabla \widehat \vy_h^*\|_{L^\infty(\widehat T)} 
 \lesssim h_T^{-1} \| \widehat \nabla (\widehat \vy_h^*-\vc)\|_{L^\infty(\widehat T)} = h_T^{-1} \| \widehat \nabla Q[\widehat \vy^* \! -\vc]\|_{L^\infty(\widehat T)},
 $$
where $\vc := |T|^{-1} \int_T \vy^* \in \mathbb R^3$ is the average of $\vy^*$.
We then employ the stability \eqref{e:Q_stability} of $Q$, together with \eqref{e:L2_map_back} and Poinc\'are inequality, to deduce
$$
\| \nabla \vy_h^*\|_{L^\infty(T)}  \lesssim h_T^{-1} \| \widehat \vy^*-\vc\|_{L^\infty(\widehat T)} \lesssim  h_T^{-1} \|  \vy^*-\vc \|_{L^\infty(T)} \lesssim  \| \nabla \vy^* \|_{L^\infty(T)},
$$
which is the first estimate in \eqref{rec:intermediate_estim}.

To prove the second estimate in \eqref{rec:intermediate_estim}, we first notice that for any $\vp \in [\mathbb P_1]^3$ we have $\widehat \vp:= \vp \circ F_T \in [\mathbb Q_2]^3$. Since $\widehat{\vp}=Q[\widehat{\vp}]$, according to \eqref{e:Q_preserve_q2}, we proceed as before, but now using $\widehat \vp \in \mathbb Q_2$ instead of the constant $\vc$ along with \eqref{e:nabla_map_back}, to write
\begin{equation}\label{e:Q_conv_W1}
\begin{split}
\| \nabla (\vy^*- \vy_h^*) \|_{L^\infty(T)} &\lesssim h_T^{-1} \left(\| \widehat \nabla (\widehat \vy^*- \widehat \vp) \|_{L^\infty(\widehat T)} + \| \widehat \nabla Q[\widehat \vy^*- \widehat \vp] \|_{L^\infty(\widehat T)} \right) \\
&\lesssim h_T^{-1} \|  \widehat \vy^*- \widehat \vp \|_{W^1_\infty(\widehat T)} \\
&  \lesssim h_T^{-1}  \| \vy^*- \vp \|_{L^\infty(T)} +  \|  \nabla ( \vy^*-  \vp) \|_{L^\infty(T)}.
\end{split}
\end{equation}
We next choose $\vy^*$ to take advantage of the piecewise smoothness \eqref{eq:extra-regularity}, namely
$$
\vp(x) := \vy^*(x_T) + \nabla \vy^*(x_T) (x-x_T).
$$
The property \eqref{e:modulus_smooth} of $\omega$ implies
$$
\| \nabla (\vy^*- \vp) \|_{L^\infty(T)} \leq \omega(h_T)
$$
and combined with $\vy^*(x)-\vy^*(x_T)=\nabla\vy^*(\xi) (x-x_T)$ for some $\xi\in T$, gives
$$
\| \vy^*- \vp \|_{L^\infty(T)} \leq h_T \, \omega (h_T).
$$
Inserting these estimates in \eqref{e:Q_conv_W1} yields the second estimate in \eqref{rec:intermediate_estim}.

The estimate on the isometry defect $\| (\nabla \vy_h^*)^T \nabla \vy_h^*- I_2\|_{L^\infty(T)}$ follows directly from the intermediate estimates \eqref{rec:intermediate_estim} and the assumption $\I[\vy^*]=I_2$
\begin{equation*}
\begin{split}
\| (\nabla \vy_h^*)^T \nabla \vy_h^* &- I_2\|_{L^\infty(T)} = \| (\nabla \vy_h^*)^T \nabla \vy_h^*-( \nabla \vy^*)^T \nabla \vy^*\|_{L^\infty(T)} \\
&\leq
( \| \nabla \vy^*\|_{L^\infty(T)}+ \| \nabla \vy_h^*\|_{L^\infty(T)} ) \| \nabla (\vy_h^*-\vy^*) \|_{L^\infty(T)}
\le c \, \omega(h_T).
\end{split}
\end{equation*}
for a constant $c$ independent of the discretization parameters; hence 
$\vy_h^*\in \mathbb A_{h,c\omega(h)}$.

\medskip
\textit{Step (ii): Broken $H^2-$ Stability}. For $\vp \in [\mathbb P_1]^3$, we set $\widehat \vp := \vp \circ F_T \in [\mathbb Q_2]^3$ to get
$$
\| D^2 \vy_h^*\|_{L^2(T)}  = \| D^2 (\vy_h^*-\vp) \|_{L^2(T)} \lesssim h_T^{-1}  \| \widehat \vy_h^*- \widehat \vp \|_{H^2(\widehat T)},
$$
in view of \eqref{e:D2_map_back_2}.
Thanks to the invariance  \eqref{e:Q_preserve_q2} and stability \eqref{e:Q_stability} of $Q$, we obtain
$$
\| D^2 \vy_h^*\|_{L^2(T)}  \lesssim  h_T^{-1}  \| Q[\widehat \vy^*- \widehat \vp] \|_{H^2(\widehat T)} \lesssim h_T^{-1}  \| \widehat \vy^*- \widehat \vp \|_{L^2(\widehat T)}.
$$
%Note that  (A.7) in \cite{bonito2019dg} states that for $v \in H^2(T)$,  there holds
%\begin{equation}\label{e:D2_map_back}
%\| \widehat D^2 ( v \circ F_T) \|_{L^2(\widehat T)} \lesssim h_T \|D^2 v \|_{L^2(T)} + \| \nabla v \|_{L^2(T)}.
%\end{equation}
This, together with \eqref{e:L2_map_back} and a standard interpolation estimate on $T$, yields
%$$
%\| D^2 \vy_h^*\|_{L^2(T)}  \lesssim  h_T^{-2}  \|  \vy^*-  \vp \|_{L^2( T)} + h_T^{-1}  \| \nabla ( \vy^*-  \vp) \|_{L^2( T)} + \| D^2 \vy^*\|_{L^2(T)} \lesssim \| D^2 \vy^*\|_{L^2(T)},
%$$
$$
\| D^2 \vy_h^*\|_{L^2(T)}\lesssim h_T^{-2}  \|  \vy^*-  \vp \|_{L^2(T)} \lesssim \| D^2 \vy^*\|_{L^2(T)},
$$
which is the desired stability estimate. 

\medskip
\textit{Step (iii): $H^2-$Convergence}. We exploit a density argument. For any $\epsilon>0$, there exists {$\vy^\epsilon$ so that $\vy^\epsilon|_{\Omega_i} \in[H^3(\Omega_i)]^3$ and $\| \vy^*- \vy^\epsilon \|_{H^2(\Omega_i)} \leq \epsilon$ for $i=1,2$;}
%$\vy^\epsilon \in H^3(\Omega)$ so that $\| \vy^*- \vy^\epsilon \|_{H^2(\Omega)} \leq \epsilon$; 
$\vy^\epsilon$ may not be an isometry {and not be continuous across $\mathcal C$}. 
{Exploiting the fact that $\mathcal C$ is exactly matched by $\Th$, we split} 
\begin{equation}\label{e:rec_split}
\sum_{T\in\Th}\|\vy^*- \vy_h^*\|_{H^2(T)}^2\lesssim {\sum_{i=1}^2\|\vy^*- \vy^\epsilon\|_{H^2(\Omega_i)}^2}   + \sum_{T\in\Th}\|\vy^\epsilon - \vy_h^\epsilon\|_{H^2(T)}^2   + \sum_{T\in\Th}\|\vy_h^\epsilon - \vy_h^*\|_{H^2(T)}^2
\end{equation}
with $\vy_h^\epsilon = \widehat \vy_h^\epsilon \circ F_T^{-1}$ and $\widehat \vy_h^\epsilon := Q[\vy^\epsilon\circ F_T]$, and estimate each of the three terms separately. The first term is obviously bounded by $\epsilon^2$.

For the second term, we let $m=0,1,2$ and combine \eqref{e:Q_approx} with \eqref{Hm-est} to arrive at
\begin{equation*}
%\begin{split}
| \vy_h^\epsilon - \vy^\epsilon |_{H^m(T)} 
%&\lesssim h_T^{1-m} | \widehat \vy_h^\epsilon -\widehat \vy^\epsilon |_{H^m(\widehat T)} 
\lesssim h_T^{1-m} \left(\sum_{i=1}^2 \| \partial^3_{\widehat x_i} \widehat \vy^\epsilon \|_{L^2(\widehat T)}^2\right)^{1/2} 
\lesssim h_T^{1-m} \left(\sum_{i=1}^2 \| \partial^3_{\widehat x_i} (\widehat \vy^\epsilon - \widehat{I_h \vy^\epsilon}) \|_{L^2(\widehat T)}^2\right)^{1/2},
%\end{split}
\end{equation*}
where $I_h$ is the local Lagrange interpolant onto $[\mathbb V_h^2]^3$ and $\widehat{I_h \vy^\epsilon} := I_h \vy^\epsilon \circ F_T^{-1} \in [\mathbb Q_2]^3$.
As a consequence, using the estimate \eqref{e:D3_map_back} to map back to the physical element $T$, and applying the error estimate \eqref{e:approx_Ih} for $I_h$ to the ensuing terms, we deduce
$$
| \vy_h^\epsilon - \vy^\epsilon |_{H^m(T)} \lesssim h_T^{3-m} \| \vy^\epsilon \|_{H^3(T)},
$$
for $m=0,1,2$. We thus conclude
\begin{equation*}
\sum_{T\in\Th}\|\vy^\epsilon - \vy_h^\epsilon\|_{H^2(T)}^2\lesssim h^2 {\sum_{i=1}^2\| \vy^\epsilon \|_{H^3(\Omega_i)}^2}.
\end{equation*}

It remains to estimate the third term $\| \vy_h^*- \vy_h^\epsilon \|_{H^2(T)}$. To deal with each term $| \vy_h^*- \vy_h^\epsilon |_{H^m(T)}$ for $m=0,1,2$, we let $\vp_{m-1} \in \mathbb P_{m-1}$ to be chosen later with the convention that $\vp_{-1} = \mathbf{0}$, and use the invariance $Q[\vp \circ F_T]=\vp \circ F_T \in [\mathbb Q_2]^3$ for any $\vp \in [\mathbb P_1]^3$. Combining \eqref{Hm-est} with the stability \eqref{e:Q_stability} of $Q$ yields
\begin{equation*}
\begin{split}
| \vy_h^*- \vy_h^\epsilon |_{H^m(T)} &= | \vy_h^*- \vy_h^\epsilon -\vp_{m-1} |_{H^m(T)}  \lesssim h_T^{1-m} \| Q[\widehat \vy^*- \widehat \vy^\epsilon - \vp_{m-1} \circ F_T] \|_{H^2(\widehat T)} \\
&\lesssim 
h_T^{1-m} \| \widehat \vy^*- \widehat \vy^\epsilon -\vp_{m-1}\circ F_T\|_{L_2(\widehat T)} 
\lesssim h_T^{-m} \| \vy^*- \vy^\epsilon -\vp_{m-1}\|_{L^2(T)}\\
& \lesssim \| \vy^*- \vy^\epsilon\|_{H^2(T)},
\end{split}
\end{equation*}
provided $\vp_{m-1}$ is an averaged Taylor polynomial of $\vy^*- \vy^\epsilon$. This in turn implies
\begin{equation*}
\sum_{T\in\Th}\|\vy_h^\epsilon - \vy_h^*\|_{H^2(T)}^2\lesssim{\sum_{i=1}^2\|\vy^*- \vy^\epsilon\|_{H^2(\Omega_i)}^2}\lesssim \epsilon^2.
\end{equation*}
Therefore, gathering the estimates for the three terms in \eqref{e:rec_split} we obtain
$$
\sum_{T\in\Th}\|\vy^*- \vy_h^*\|_{H^2(T)}^2 \lesssim \epsilon^2 + h^2 {\sum_{i=1}^2\| \vy^\epsilon \|_{H^3(\Omega_i)}^2}
{\lesssim} \epsilon^2,
$$
provided {$h\| \vy^\epsilon \|_{H^3(\Omega_i)}
\le\epsilon$} for $h$ sufficiently small and {$i=1,2$}. Since $\epsilon$ is arbitrary, we deduce
$$
\sum_{T\in \mathcal T_h} \| \vy^*- \vy_h^*\|^2_{H^2(T)} \to 0, 
$$ 
and in particular $\vy_h^*\to\vy^*$ in $[L^2(\Omega)]^3$, as $h\to 0^+$.

\medskip
\textit{Step (iv): Convergence of $\widetilde E_h[\vy_h^*]$}. 
Steps (iii) and (iv) show that the conditions in Lemma \ref{L:convergence-modifiedH}(ii) (strong convergence of $\widetilde{H}_h$) are fulfilled, whence $\widetilde{H}_h[\vy_h^*]\to\widetilde D^2\vy^*$ strongly in $[L^2(\Omega)]^{3\times2\times2}$. Consequently, convergence of $\widetilde E_h[\vy_h^*]$ towards $\widetilde E[\vy^*]$ reduces to the argument given in Theorem~\ref{Gamma:E_h} (ii) and is not repeated here.
\end{proof}

The next theorem guarantees convergence of discrete global minimizers towards exact global minimizers,
but it is not a standard $\Gamma$-convergence result because we assume \eqref{eq:extra-regularity} for one global minimizer. Other minimizers may fail to satisfy \eqref{eq:extra-regularity}.

\begin{thm}[convergence of global discrete minimizers with creases]\label{T:conv_crease}
Assume that a global minimizer $\vy^* \in \widetilde{\mathbb A}$ of $\widetilde E$ {satisfies} the additional regularity \eqref{eq:extra-regularity}.
%Let $\vy_h\in\A_{h,\delta}$ be a sequence of functions such that $\widetilde E_h[\vy_h]$ is uniformly bounded in $h$ and let $\delta=\delta(h) \geq  c \, \omega(h)$ with $c$ the constant in Lemma~\ref{l:limsup_crease} and $\omega$ the modulus of smoothness in \eqref{e:modulus_smooth}.
{Let $\delta=\delta(h) \geq  c \, \omega(h)$ with $c$ the constant in Lemma~\ref{l:limsup_crease} and $\omega$ the modulus of smoothness in \eqref{e:modulus_smooth}.}
 If $\vy_h{\in\A_{h,\delta}}$ is an almost global minimizer of $\widetilde E_h$ in the sense that
  \begin{equation}\label{almost-min}
   \widetilde E_h[\vy_h] \le \inf_{\vw_h\in\A_{h,\delta}} \widetilde E_h[\vw_h] + \sigma
  \end{equation}
  where $\sigma, \delta \to 0$ as $h\to0^+$, then $\{\vy_h\}_{h>0}$ is precompact in $[L^2(\Omega)]^3$
  and every cluster point $\vy$ belongs to ${\widetilde\A}$ and is a global minimizer of $\widetilde{E}$,
  namely $\widetilde E[\vy]=\inf_{\vw\in\widetilde\A}\widetilde E[\vw]$. Moreover, up to a subsequence (not relabeled) 
  %the energies converge
  %
\begin{equation}\label{conv-Et}
 \widetilde  E[\vy] = \lim_{h\to0^+}\widetilde E_h[\vy_h].
\end{equation}
\end{thm}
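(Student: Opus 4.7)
The plan is to follow the standard $\Gamma$-convergence route of Corollary~\ref{C:convergence}, but with two crease-specific adaptations: the compactness argument must produce a limit $\vy \in [H^2(\Omega\setminus\Ccal)\cap W^1_\infty(\Omega)]^3$ (rather than $[H^2(\Omega)]^3$), and the lim-sup inequality is attained only against the distinguished regular minimizer $\vy^*$, via the recovery sequence supplied by Lemma~\ref{l:limsup_crease}. This is precisely why the resulting convergence statement is weaker than a bona-fide $\Gamma$-convergence result.

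\emph{Step 1 (compactness).} Lemma~\ref{l:limsup_crease} produces $\vy_h^*\in\A_{h,c\omega(h)}\subseteq\A_{h,\delta}$ with $\widetilde E_h[\vy_h^*]\to \widetilde E[\vy^*]$, so the almost-minimality \eqref{almost-min} forces $\widetilde E_h[\vy_h]$ to be uniformly bounded. The crease-adapted analogue of Lemma~\ref{coercivity:E_h_bi} (which still holds because its proof relies only on pointwise isometry defect control and on $\overline H_h$-stability, both available in the modified framework) yields $\|\vy_h\|_{H_h^2(\Omega)}\lesssim 1$, and Friedrichs' inequality \eqref{friedrichs} gives uniform $L^2$ bounds for $\vy_h$ and $\nabla_h\vy_h$. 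A broken-space compactness argument in the spirit of \cite[Proposition~5.2]{bonito2019dg}, suitably modified so that jumps across the crease edges $\{e_1,\dots,e_J\}$ are not penalized, extracts a (not relabelled) subsequence and a limit $\vy \in [H^2(\Omega\setminus\Ccal)\cap H^1(\Omega)]^3$ with $\vy_h\to \vy$ in $[L^2(\Omega)]^3$ and $\nabla_h \vy_h \to \nabla\vy$ in $[L^2(\Omega)]^{3\times 2}$. The isometry constraint $\I[\vy] = I_2$ a.e.\ in $\Omega$ is inherited verbatim as in Theorem~\ref{Gamma:E_h}(i) using $D_h[\vy_h](x_T)\le\delta\to 0$, and this in turn upgrades $\vy$ to $[W^1_\infty(\Omega)]^3$. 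The Dirichlet conditions on $\Gamma^D$ are recovered through the Nitsche boundary jumps penalized in $\widetilde B_h$. Hence $\vy \in \widetilde\A$.

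\emph{Step 2 ($\liminf$ and conclusion).} Lemma~\ref{L:convergence-modifiedH}(i) yields $\widetilde H_h[\vy_h]\rightharpoonup \widetilde D^2\vy$ in $[L^2(\Omega)]^{3\times 2\times 2}$, and the non-negative stabilization terms in $\widetilde B_h$ are discarded. Weak lower semicontinuity of the $L^2$-norm gives
\[
\widetilde B[\vy] = \tfrac 12\int_\Omega |\widetilde D^2\vy|^2 \le \liminf_{h\to 0^+}\widetilde B_h[\vy_h].
\]
Lemma~\ref{l:conv_nl}, which applies verbatim when $\overline H_h$ is built from $\widetilde H_h$ (its proof uses only stability and weak convergence of the reduced Hessian, both furnished by the analogues of Lemmas~\ref{L2bound-modified} and~\ref{weakconv-modified}), provides $\widetilde C_h[\vy_h]\to C[\vy]$. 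Combining these with the comparison
\[
\limsup_{h\to 0^+}\widetilde E_h[\vy_h] \le \limsup_{h\to 0^+}\big(\widetilde E_h[\vy_h^*]+\sigma\big) = \widetilde E[\vy^*] = \inf_{\vw\in\widetilde\A}\widetilde E[\vw]
\]
gives the chain $\widetilde E[\vy]\le \liminf \widetilde E_h[\vy_h]\le \limsup \widetilde E_h[\vy_h]\le \widetilde E[\vy^*]$. Since $\vy\in\widetilde\A$, all four quantities coincide, so $\vy$ is a global minimizer and \eqref{conv-Et} holds along the extracted subsequence.

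\emph{Main obstacle.} The most delicate step is the compactness argument of Step~1, which must produce a limit that is only piecewise $H^2$ across $\Ccal$. One has to exploit the fact that $\widetilde H_h$ and the modified form $\langle\cdot,\cdot\rangle_{\widetilde H_h^2}$ drop the $[\nabla_h \vy_h]$-penalization exactly along the crease edges while still controlling jumps on $\Eh^a\setminus\Ccal$ so as to enforce $H^2$-regularity in each subdomain $\Omega_1,\Omega_2$. A secondary subtlety, responsible for the non-standard form of the theorem, is that the lim-sup bound is available only relative to the distinguished minimizer $\vy^*$ satisfying the extra regularity \eqref{eq:extra-regularity}; other minimizers of $\widetilde E$ may fail to satisfy it and are accessed only indirectly via the above chain of inequalities.
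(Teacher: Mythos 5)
Your proposal is correct and follows essentially the same route as the paper's proof: establish compactness and the lim-inf bound by repeating the argument of Theorem~\ref{Gamma:E_h} with Lemma~\ref{L:convergence-modifiedH}(i) replacing Lemma~\ref{weak-conv}, then close the chain using the recovery sequence $\vy_h^*$ for the distinguished regular minimizer from Lemma~\ref{l:limsup_crease}. Your write-up is more explicit about the compactness mechanism than the paper (which merely says the argument "follows along the lines" of Theorem~\ref{Gamma:E_h}); one small imprecision is the phrase ``jumps across the crease edges are not penalized'' — only the \emph{gradient} jumps $\jump{\nabla_h\vy_h}$ are dropped on $\Ccal$ in $\langle\cdot,\cdot\rangle_{\widetilde H^2_h}$, while the function jumps $\jump{\vy_h}$ are still penalized over all of $\Gh^a$, which is what forces the limit to be $H^1(\Omega)$ (i.e.\ continuous across $\Ccal$) while only piecewise $H^2$.
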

\begin{proof}
The $\liminf$ property follows along the lines of Theorem~\ref{Gamma:E_h} (i) because it is based on Lemmas \ref{coercivity:E_h_bi} and \ref{l:conv_nl}, which remain valid in this context, as well as Lemma \ref{L:convergence-modifiedH} (i) (weak convergence of $\widetilde{H}_h$) instead of Lemma \ref{weak-conv} (weak convergence of $H_h$) and the weak lower semicontinuity of the $L^2$-norm.
  Therefore, there is $\vy \in \widetilde\A$ such that (up to a subsequence not relabelled) $\vy_h \to \vy$ in $[L^2(\Omega)]^3$ and
\[
  \widetilde E[\vy] \leq \liminf_{h\to 0^+}\widetilde E_h[\vy_h]. 
\]
  
To show that $\vy$ is a global minimizer of $\widetilde E$ we resort to the extra regularity \eqref{eq:extra-regularity} of the global minimizer $\vy^*\in\widetilde{\mathbb{A}}$ of $\widetilde E$. Let $\{ \vy_h^*\}_{h>0} \subset [L^2(\Omega)]^3$ be the sequence provided by Lemma~\ref{l:limsup_crease} ($\limsup$ property with creases), which satisfies
\[
\vy_h^* \in \mathbb A_{h,c\omega(h)},
\quad
\widetilde E_h[\vy_h^*] \to \widetilde E[\vy^*]
\]
as $h\to 0^+$. In view of \eqref{almost-min} and $\vy_h \in \mathbb{A}_{h,\delta(h)}$, we end up with
$$
\widetilde E[\vy] \leq \liminf_{h\to 0^+}\widetilde E_h[\vy_h] \leq \limsup_{h\to 0^+} \big(\widetilde E_h[\vy_h^*]+\sigma \big) =\widetilde E[\vy^*] = \inf_{\vw\in\widetilde{A}} \widetilde E[\vw].
$$
Therefore, $\vy$ is indeed a global minimizer of $\widetilde E$ and \eqref{conv-Et} is valid.
\end{proof}

%%%%%%%%%%%%%%%%%%%%%%%%%%%%%%%%%%%%%%%%%%%%%%%%%%%%%%%%%%%%%%%%%%%%%%%%%%%%%%%%%%%%%%%%
\section{Discrete gradient flow}\label{sec: gf-bilayer}
%%%%%%%%%%%%%%%%%%%%%%%%%%%%%%%%%%%%%%%%%%%%%%%%%%%%%%%%%%%%%%%%%%%%%%%%%%%%%%%%%%%%%%%%

Solving the minimization problem \eqref{discrete_minimization_bilayer} is a nontrivial
task because it entails enforcing the {\it nonconvex constraint} $D_h[\vy_h](x_T)\le\delta$ at element barycenters $x_T$.
We now develop a discrete gradient flow with respect to the $H^2_h$ metric \eqref{def:H2norm} that linearizes
the isometry constraint according to \eqref{linearization-discrete}. We refer to
\cite{bartels2013approximation,bartels2018modeling,bartels2017bilayer,bonito2020ldg,bonito2020ldg-na,bonito2019dg,bonito2020discontinuous} and especially to S. Bartels and Ch. Palus \cite{bartels2020stable} for similar gradient flows.

We start recalling the notion of linearized isometry constraint for $\vv_h,\vy_h\in[\V_h^k]^3$
\begin{equation}\label{linear-discrete}
  L[\vv_h;{\vy_h}](x_T) = \big[ \nabla\vv_h^T \nabla{\vy_h} + \nabla{\vy_h}^T \nabla \vv_h  \big](x_T)
  \quad\forall \, T\in\Th
\end{equation}
and defining a tangent space associated with the isometry constraint for any ${\vy_h} \in \A_{h,\delta}$
\begin{equation}\label{linearized-cons}
\mathcal{F}_{h}({\vy_h}):=\Big\{\vv_h\in\V^k_h(\bz,\bz): \quad L[\vv_h;{\vy_h}](x_T)=0 \quad\forall\, T\in\Th\Big\}.
\end{equation}
Given $\vy^0_h\in\A_{h,0}$ (i.e, $\vy^0_h$ satisfies the isometry constraint $\I[\vy^0_h](x_T)=I_2$ at each barycenter $x_T$), the discrete gradient flow consists of seeking recursively $\delta\vy^{n+1}_h:= \vy_n^{n+1}-\vy_h^n \in\mathcal{F}_{h}(\vy^n_h)$ such that
\begin{equation}\label{gf_bilayer}
\frac{1}{\tau}\big\langle\delta\vy^{n+1}_h,\vv_h\big\rangle_{H_h^2(\Omega)}+a_h(\delta\vy^{n+1}_h,\vv_h)=-a_h(\vy^{n}_h,\vv_h)+\ell[\vy^n_h](\vv_h) \quad\forall \, \vv_h\in\mathcal{F}_{h}(\vy^n_h).
\end{equation}
Here $\tau>0$ is a pseudo time step and $a_h$ is the bilinear form corresponding to the variational derivative of the bending energy $B_h[\vy_h]$ defined in \eqref{discrete-energy-0}, i.e.
\begin{equation}\label{e:a_h}
\begin{split}
  a_h({\vy_h},\vv_h) &:=\int_{\Omega}H_h[{\vy_h}]:H_h[\vv_h]
  \\ &
  +\gamma_1(\h^{-1}[\nabla_h{\vy_h}],[\nabla_h\vv_h])_{L^2(\Gh^a)}+\gamma_0(\h^{-3}[{\vy_h}],[\vv_h])_{L^2(\Gh^a)}.
\end{split}
\end{equation}
The linear form $\ell[\vy^n_h](\vv_h)$ on $\vv_h$ is the first variation of the cubic energy $C_h[\vy^n_h]$,
defined in \eqref{quadrature-2}, along the direction of the test function $\vv_h$ and is given by 
\begin{align*}
\ell[\vy^n_h](\vv_h):=&\sum_{i,j=1}^2\sum_{T\in\Th}|T| \, \overline{H}_h[\vv_h]_{ij}\cdot(\partial_1\vy_h^n\times\partial_2\vy_h^n)(x_T) \, Z_{ij} \\
&+\sum_{i,j=1}^2\sum_{T\in\Th}|T| \, \overline{H}_h[\vy_h^n]_{ij}\cdot(\partial_1\vv_h\times\partial_2\vy_h^n)(x_T) \, Z_{ij} \\
& +\sum_{i,j=1}^2\sum_{T\in\Th}|T| \, \overline{H}_h[\vy_h^n]_{ij}\cdot(\partial_1\vy_h^n\times\partial_2\vv_h)(x_T) \, Z_{ij};
\end{align*}
recall that both $\overline{H}_h[\vv_h]$ and $Z$ are piecewise constant on $\Th$.
The explicit treatment of $\vy^n_h$ in $\ell[\vy^n_h](\vv_h)$ is similar to the scheme proposed and analyzed
by S. Bartels and Ch. Palus \cite{bartels2020stable}.
{We note that \eqref{gf_bilayer} is the discrete Euler-Lagrange equation for the augmented energy $E_h[\vy_h^{n+1}]+\frac{1}{2\tau}\|\vy_h^{n+1}-\vy_h^{n}\|_{H^2_h(\Omega)}^2$, except that the nonlinear terms corresponding to the cubic energy $C_h[\vy_h^{n+1}]$ are linearized by $\ell[\vy^n_h](\vv_h)$. We refer to the nonlinear continuous Euler-Lagrange equation \eqref{eq:energy-E-L} for a comparison.}

For later use, we note that Lemma~\ref{L2bound-modified} (stability of $\overline{H}_h[\vv_h]$) yields
\begin{equation*}
\begin{split}
  |\ell[{\vy_h}](\vv_h)| \lesssim  \sqrt{1+\delta} \| Z\|_{L^\infty(\Omega)}
  \Big(\| \nabla_h\vv_h \|_{L^2(\Omega)}\| {\vy_h}\|_{H^2_h(\Omega)}+
\| \nabla_h{\vy_h} \|_{L^2(\Omega)}\| \vv_h\|_{H^2_h(\Omega)} \Big),
\end{split}
\end{equation*}
provided ${\vy_h} \in \A_{h,\delta}$ because $|\nabla{\vy_h}(x_T)| \le {\sqrt{2(1+\delta)}}$
from Lemma~\ref{lem:bi_iso_control} (pointwise isometry {constraint}) and the inverse inequality
  $|\nabla \vv_h(x_T)| \lesssim h_T^{-1} \|\nabla\vv_h\|_{L^2(T)}$ applies.
In addition, we rewrite the Friedrichs inequality \eqref{friedrichs} as follows
\begin{equation}\label{e:friederich}
\| \nabla_h {\vy_h} \|_{L^2(\Omega)} \lesssim \|  {\vy_h} \|_{H^2_h(\Omega)} + C_{\vvarphi,\Phi}, \qquad \forall \vw_h \in \V_h^k(\vvarphi,\Phi),
\end{equation}
where $C_{\vvarphi,\Phi} = \|\vvarphi\|_{H^1(\Omega)} + \|\Phi\|_{H^1(\Omega)}$. From these estimates we deduce the existence of a constant $c_{nl}$ such that for ${\vy_h} \in \A_{h,\delta}$ and $\vv_h \in \V^k_h(\bz,\bz)$ we have
\begin{equation}\label{e:contlinnonlin}
\begin{split}
\big|\ell[{\vy_h}](\vv_h)\big|  \leq  c_{nl}  \sqrt{1+\delta} \| Z\|_{L^\infty(\Omega)}(\| {\vy_h} \|_{H^2_h(\Omega)}\!+ C_{\vvarphi,\Phi}) \| \vv_h\|_{H^2_h(\Omega)}.
\end{split}
\end{equation}

%--------------------------------------------------------------------------------------
\subsection{Energy stability and admissibility}
%--------------------------------------------------------------------------------------

We discuss in this section the energy reduction property of the gradient flow and, although the isometry constraint
$D_h[\vy_h^n](x_T)=0$ is relaxed and linearized in the iterative scheme, the deviation of $D_h[\vy_h^n](x_T)$
from $0$ is controlled by a parameter $\delta>0$ provided $\tau$ is sufficiently small.
These results  rely on a discrete inverse inequality on finite dimensional subsets.

\begin{lemma}[discrete Sobolev inequality]\label{lem:discrete-inverse}
Let $W_h \subset  \Pi_{T\in\Th}H^1(T)$ be a  finite element space subordinated to the partition $\Th$.
For any $w_h \in W_h$ there holds
\begin{equation}\label{eq:discrete-sobolev-ineq}
  \|w_h\|_{L^{\infty}(\Omega)}\lesssim \big(1+|\log h_{\min}|\big)^{\frac12}
  \Big(\|w_h\|_{L^2(\Omega)} +\|\nabla_h w_h\|_{L^2(\Omega)}
  +\|\h^{-\frac12}\jump{w_h}\|_{L^2(\Gh^0)}\Big),
\end{equation}
where  $h_{\min}:=\min_{T\in\Th}h_T$.
\end{lemma}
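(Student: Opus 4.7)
The plan is to reduce the claim to the classical two-dimensional log-Sobolev embedding for conforming finite element spaces by splitting $w_h$ into a conforming part plus a remainder that can be controlled by the interior jump seminorm. First I would introduce an averaging (enriching) operator $E_h:W_h\to W_h\cap H^1(\Omega)$ of Oswald--Karakashian--Pascal type, defined by assigning to each inter-element degree of freedom a (weighted) mean of the values of $w_h$ coming from the adjacent elements. On shape-regular partitions and the standard broken polynomial spaces considered in this paper, such an operator satisfies the well-known patch estimate
$$
\|w_h - E_h w_h\|_{L^2(T)}^2 + h_T^2\,\|\nabla_h(w_h - E_h w_h)\|_{L^2(T)}^2 \;\lesssim\; \sum_{e\subset\partial\omega_T,\ e\in\Eh^0} h_e\,\|[w_h]\|_{L^2(e)}^2,
$$
where $\omega_T$ is a fixed neighborhood of $T$. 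Summing over $T\in\Th$ and rewriting with the weight $\h$ gives
$$
\|\h^{-1}(w_h - E_h w_h)\|_{L^2(\Omega)} + \|\nabla_h(w_h - E_h w_h)\|_{L^2(\Omega)} \;\lesssim\; \|\h^{-1/2}[w_h]\|_{L^2(\Gh^0)}.
$$

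Next I would use $\|w_h\|_{L^\infty(\Omega)}\le \|E_h w_h\|_{L^\infty(\Omega)} + \|w_h - E_h w_h\|_{L^\infty(\Omega)}$ and bound the two pieces separately. For the conforming piece $E_h w_h\in H^1(\Omega)$ I would invoke the classical two-dimensional log-Sobolev embedding for conforming piecewise polynomial functions (see, e.g., Brenner--Scott, Ch.~4),
$$
\|E_h w_h\|_{L^\infty(\Omega)} \;\lesssim\; \big(1+|\log h_{\min}|\big)^{1/2}\,\|E_h w_h\|_{H^1(\Omega)},
$$
and then control $\|E_h w_h\|_{H^1(\Omega)}$ by $\|w_h\|_{L^2(\Omega)} + \|\nabla_h w_h\|_{L^2(\Omega)} + \|\h^{-1/2}[w_h]\|_{L^2(\Gh^0)}$ via the triangle inequality and the enriching estimate above.

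For the non-conforming remainder $w_h - E_h w_h$, which is piecewise polynomial on $\Th$, a standard local inverse inequality yields $\|w_h - E_h w_h\|_{L^\infty(T)}\lesssim h_T^{-1}\|w_h - E_h w_h\|_{L^2(T)}$. Combining with the patch estimate on every $T$ (and observing that each edge lies in only $O(1)$ patches by shape-regularity), I would deduce $\|w_h - E_h w_h\|_{L^\infty(\Omega)}\lesssim \|\h^{-1/2}[w_h]\|_{L^2(\Gh^0)}$. Adding the two contributions produces \eqref{eq:discrete-sobolev-ineq}.

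The crux of the argument is the two-dimensional log-Sobolev embedding applied to the conforming part, which is the sole source of the $(1+|\log h_{\min}|)^{1/2}$ factor; the enriching operator bound and the inverse inequality are routine in this context. A minor subtlety is that the lemma is stated for a generic finite element space $W_h$ subordinated to $\Th$, so one should verify that the averaging operator is well defined for the spaces at hand, which is the case for the standard $\mathbb{P}_k$ and $\mathbb{Q}_k$ broken spaces used throughout the paper.
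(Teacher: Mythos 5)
Your proposal is correct and follows essentially the same path as the paper: both decompose $w_h$ into a conforming part (via a smoothing operator — the paper uses the Bonito–Nochetto quasi-interpolant from \cite{bonito2010quasi,bonito2019dg}, you invoke the equivalent Oswald/Karakashian–Pascal averaging operator, with identical approximation and stability estimates) plus a jump-controlled remainder, then apply the two-dimensional discrete log-Sobolev embedding to the conforming part and an inverse inequality to the remainder. The steps, the source of the logarithmic factor, and the final combination all match the paper's argument.
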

\begin{proof}
%
%{W_h}
We denote by $\Pi_h: \Pi_{T\in\Th}H^1(T)\to\V_h^k\cap H^1(\Omega)$ the smoothing operator from \cite{bonito2010quasi, bonito2019dg} and recall that it satisfies
\begin{equation*}\label{eqn:smooth_interp_H1}
\|\nabla \Pi_h w_h\|_{L^2(\Omega)} + \|\h^{-1}(w_h-\Pi_h w_h)\|_{L^2(\Omega)} \lesssim  \|\nabla_h w_h\|_{L^2(\Omega)}
 +\|\h^{-\frac{1}{2}}\jump{w_h}\|_{L^2(\Gh^0)},
\end{equation*}
and
\begin{equation*}\label{eqn:smooth_interp_L2}
\|\Pi_hw_h\|_{L^2(\Omega)}\lesssim\|w_h\|_{L^2(\Omega)}.
\end{equation*}
Therefore, combining the triangle and inverse inequalities {for $w_h\in W_h$} implies
\begin{align*}
\|w_h\|_{L^{\infty}(\Omega)} &\lesssim \|w_h-\Pi_hw_h\|_{L^{\infty}(\Omega)}+\|\Pi_hw_h\|_{L^{\infty}(\Omega)} \\
&\lesssim \|\h^{-1}(w_h-\Pi_hw_h)\|_{L^2(\Omega)}+\big(1+|\log h_{\min}|\big)^{\frac12} \|\Pi_hw_h\|_{H^1(\Omega)},
\end{align*}
in view of the following discrete Sobolev inequality in 2d \cite{bramble1986construction,brenner2007mathematical}
%'
\begin{equation*}
\|\Pi_hw_h\|_{L^{\infty}(\Omega)}\lesssim \big(1+|\log h_{\min}|\big)^{\frac12}\|\Pi_hw_h\|_{H^1(\Omega)}.
\end{equation*}
This leads to the assertion upon applying the preceding estimates for $\Pi_h$.
\end{proof}
 
We are now in a position to prove the main result of this section, namely that the gradient flow is
energy decreasing and controls the isometry defect.

\begin{thm}[properties of gradient flow]\label{thm:gf}
    Let $\{ \vy^0_h\}_{h>0} \subset \A_{h,0}$ satisfy $E_h[\vy^0_h]\le c_0$ with $c_0$ a constant
    independent of $h$
    and let all subdivisions $\Th$ be such that $|\log h_{\min}| \geq 1$. Let $N$ be the number of
    iterations of the gradient flow and $\tau$ be its pseudo-time step.
    There exists a constant $\alpha_1= \alpha_1(\vvarphi,\Phi,Z)>0$ independent of $h$ and $N$
    such that if $\tau \le (2\alpha_1|\log h_{\min}|)^{-1}$, then the energy $E_h[\vy^N_h]$ satisfies
\begin{equation}\label{stability}
E_h[\vy^N_h]+\frac{1}{2\tau}\sum_{n=0}^{N-1}\|\delta\vy^{n+1}_h\|_{H_h^2(\Omega)}^2\le E_h[\vy^0_h].
\end{equation}
In addition, there are  constants $\alpha_2=\alpha_2(\vvarphi,\Phi,Z) > 0$
and $\alpha_3 > 0$, both independent of $h$ and $N$, such that the isometry defect $D_h[\vy_h^{N}]$ satisfies
\begin{equation}\label{control-defect-bi}
{D_h[\vy_h^{N}](x_T)} \le\alpha_3\tau|\log h_{\min}|\big(E_h[\vy_h^0]+\alpha_2\big) \quad\forall \, T\in\Th.
\end{equation} 
\end{thm}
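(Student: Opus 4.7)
The plan is to prove the energy bound \eqref{stability} and the defect control \eqref{control-defect-bi} together by an induction on $n$, since the residual estimate that underlies the energy step needs pointwise control of $\nabla\vy_h^n(x_T)$ that is precisely the content of the defect bound. The three tools are: the fact that $\delta\vy_h^{n+1}\in\mathcal{F}_h(\vy_h^n)$ is itself an admissible test function in \eqref{gf_bilayer}; the polarization identity for the symmetric bilinear form $a_h$, the first variation of the quadratic $B_h$; and the cubic polynomial structure of $C_h$.

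\textbf{Energy identity.} Testing \eqref{gf_bilayer} with $\vv_h = \delta\vy_h^{n+1}$ and expanding $a_h(\vy_h^{n+1},\delta\vy_h^{n+1})$ via polarization yields
\[
E_h[\vy_h^{n+1}]-E_h[\vy_h^n]+\tfrac{1}{\tau}\|\delta\vy_h^{n+1}\|_{H^2_h(\Omega)}^2+B_h[\delta\vy_h^{n+1}] = -R^{n+1},
\]
with linearization defect $R^{n+1}:=C_h[\vy_h^{n+1}]-C_h[\vy_h^n]-\ell[\vy_h^n](\delta\vy_h^{n+1})$. Because $C_h$ is cubic, expanding $\vy_h^{n+1}=\vy_h^n+\delta\vy_h^{n+1}$ shows that $R^{n+1}$ collects three quadratic-in-$\delta\vy_h^{n+1}$ contributions together with one cubic contribution, all sampled at barycenters $x_T$. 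I would estimate each summand pointwise using Lemma \ref{lem:bi_iso_control} to bound $|\partial_i\vy_h^n(x_T)|\lesssim 1$ (valid inductively as $\vy_h^n\in\A_{h,c}$ for some $c=O(1)$), the piecewise-constant identity $|T|^{1/2}|\overline H_h[\vw_h](x_T)|=\|\overline H_h[\vw_h]\|_{L^2(T)}$ combined with Lemma \ref{L2bound-modified}, the inverse bound $|T|^{1/2}|\nabla_h\vw_h(x_T)|\lesssim\|\nabla_h\vw_h\|_{L^2(T)}$, and Lemma \ref{lem:discrete-inverse} (discrete Sobolev) for each residual $L^\infty$ factor. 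Coercivity (Lemma \ref{coercivity:E_h_bi}) then promotes the inductive energy bound $E_h[\vy_h^n]\le E_h[\vy_h^0]$ to $\|\vy_h^n\|_{H^2_h(\Omega)}\lesssim 1$, and a bootstrap from the one-step identity itself gives $\|\delta\vy_h^{n+1}\|_{H^2_h(\Omega)}\lesssim 1$ (so that the cubic-in-$\delta\vy_h^{n+1}$ term is sub-dominant to a quadratic one). Combining these ingredients I arrive at
\[
|R^{n+1}|\le\alpha_1(1+|\log h_{\min}|)\|\delta\vy_h^{n+1}\|_{H^2_h(\Omega)}^2
\]
for a constant $\alpha_1=\alpha_1(\vvarphi,\Phi,Z)$. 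The hypothesis $\tau\le(2\alpha_1|\log h_{\min}|)^{-1}$ absorbs $|R^{n+1}|$ into $\tfrac{1}{\tau}\|\delta\vy_h^{n+1}\|_{H^2_h(\Omega)}^2$, yielding $E_h[\vy_h^{n+1}]+\tfrac{1}{2\tau}\|\delta\vy_h^{n+1}\|_{H^2_h(\Omega)}^2\le E_h[\vy_h^n]$; summation over $n=0,\dots,N-1$ delivers \eqref{stability}.

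\textbf{Defect control.} At every barycenter,
\[
\I[\vy_h^{n+1}](x_T)-\I[\vy_h^n](x_T)=L[\delta\vy_h^{n+1};\vy_h^n](x_T)+\nabla\delta\vy_h^{n+1}(x_T)^T\nabla\delta\vy_h^{n+1}(x_T),
\]
and the first summand vanishes because $\delta\vy_h^{n+1}\in\mathcal{F}_h(\vy_h^n)$, cf.~\eqref{linearized-cons}. Since $\vy_h^0\in\A_{h,0}$, telescoping gives $D_h[\vy_h^N](x_T)\le\sum_{n=0}^{N-1}|\nabla\delta\vy_h^{n+1}(x_T)|^2\le\sum_n\|\nabla_h\delta\vy_h^{n+1}\|_{L^\infty(\Omega)}^2$. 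Applying Lemma \ref{lem:discrete-inverse} componentwise to $\nabla_h\delta\vy_h^{n+1}$, together with the Friedrichs inequality \eqref{e:friederich} for $\delta\vy_h^{n+1}\in\V_h^k(\bz,\bz)$, gives $\|\nabla_h\delta\vy_h^{n+1}\|_{L^\infty(\Omega)}^2\lesssim(1+|\log h_{\min}|)\|\delta\vy_h^{n+1}\|_{H^2_h(\Omega)}^2$; substituting \eqref{stability} produces \eqref{control-defect-bi}.

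\textbf{Main obstacle.} The argument is self-referential: the residual bound in the energy step requires $\vy_h^n\in\A_{h,c}$ with $c$ uniformly $O(1)$ (pointwise isometry control), but such a bound is itself the content of \eqref{control-defect-bi}, which rests on \eqref{stability}. Closing this loop forces a single joint induction in which energy stability, uniform $H^2_h$-boundedness of $\vy_h^n$, and the defect bound all propagate together under the single time-step restriction. A secondary subtlety is the cubic-in-$\delta\vy_h^{n+1}$ piece of $R^{n+1}$, which a priori is $O(\|\delta\vy_h^{n+1}\|_{H^2_h(\Omega)}^3)$ and is reduced to a quadratic term only via the bootstrap $\|\delta\vy_h^{n+1}\|_{H^2_h(\Omega)}\lesssim 1$ derived from the one-step identity under the $\tau$-condition. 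The precise factor $|\log h_{\min}|$ (as opposed to $|\log h_{\min}|^{1/2}$) originates from the squared $L^\infty$-estimate $\|\nabla_h\delta\vy_h^{n+1}\|_{L^\infty(\Omega)}^2$ that appears both in bounding the cubic residual term and in telescoping the quadratic isometry defect.
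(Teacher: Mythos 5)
Your proposal follows the paper's proof quite closely: you identify the correct joint induction, the correct test function $\vv_h=\delta\vy_h^{n+1}$, the polarization of $a_h$, the cubic expansion of $C_h$ that generates the residual $R^{n+1}$, the discrete Sobolev inequality from Lemma~\ref{lem:discrete-inverse} as the source of the $|\log h_{\min}|$ factor, and the telescoping identity $\I[\vy_h^{n+1}](x_T)-\I[\vy_h^n](x_T)=L[\delta\vy_h^{n+1};\vy_h^n](x_T)+\I[\delta\vy_h^{n+1}](x_T)$ with vanishing linear term for the defect control. The paper uses exactly the same ingredients in its four-step induction.

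However, there is a genuine gap at the end of your \emph{Defect control} step. After telescoping and applying the discrete Sobolev inequality you arrive at an estimate of the form
\begin{equation*}
D_h[\vy_h^N](x_T)\lesssim \tau|\log h_{\min}|\big(E_h[\vy_h^0]-E_h[\vy_h^N]\big),
\end{equation*}
and you assert that ``substituting \eqref{stability} produces \eqref{control-defect-bi}.'' But \eqref{stability} only controls the sum from above by $E_h[\vy_h^0]$; to obtain \eqref{control-defect-bi} you also need a uniform \emph{lower} bound $E_h[\vy_h^N]\ge -\alpha_2$. This lower bound is itself delicate: it follows from the coercivity estimate \eqref{e:coercfull} of Lemma~\ref{coercivity:E_h_bi}, but only once one knows $\vy_h^{M+1}\in\A_{h,\epsilon}$ for some bounded $\epsilon$, which is precisely the quantity being estimated. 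The paper resolves this by an extra sub-argument within the induction: it first bounds $|\I[\delta\vy_h^{M+1}](x_T)|$ via the intermediate estimate \eqref{intermediate-2} and the time-step restriction, concludes $\vy_h^{M+1}\in\A_{h,\epsilon}$ with a controlled $\epsilon$, and only then invokes coercivity to lower-bound $E_h[\vy_h^{M+1}]$. Your sketch is silent on this.

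Relatedly, you flag the circular dependence of the three estimates as the ``main obstacle'' but do not indicate how to close the loop. The paper's Step~(iv) does this explicitly: it formulates the constraints that $\alpha_1,\alpha_2,\alpha_3$ must satisfy (each depending on the others through the quantity $c_\alpha$) and shows these constraints are solvable because $c_\alpha\downarrow1$ as $\alpha_1\to\infty$ while $\alpha_3,\alpha_2$ stay moderate. Without this final consistency check the induction has not actually been closed. Finally, a small point: your worry about a ``cubic-in-$\delta$'' remainder is not quite how the paper handles the algebra. After the trilinear expansion, the three remainder terms are each quadratic in $\delta\vy_h^{M+1}$ once one keeps a factor of $\vy_h^{M+1}$ or $\vy_h^M$ unexpanded, and these are controlled by the uniform $H^2_h$-bound supplied by the intermediate estimate, so no separate ``bootstrap to $\|\delta\|\lesssim 1$'' is needed (though your route would also work).
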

\begin{proof}
We proceed by induction. We first note that estimates \eqref{stability} and \eqref{control-defect-bi}
hold trivially for $N=0$ and $\vy_h^0 \in \A_{h,0}$.
Therefore, we assume that \eqref{stability} and \eqref{control-defect-bi} are valid for $N=1,...,M$ with positive constants $\alpha_1,\alpha_2,\alpha_3$ to be specified below and prove the validity of the same estimates for $N=M+1$ with the same constants $\alpha_1,\alpha_2,\alpha_3$.

We split the proof into four steps with the following roadmap. After deriving an intermediate estimate in Step (i), we prove \eqref{stability} in Step (ii) and \eqref{control-defect-bi} in Step (iii) under suitable restrictions on $\alpha_i$, $i=1,2,3$. In Step (iv), we show that it is always possible to find values of these parameters satisfying the desired restrictions. In this proof, the generic constants hidden in the symbol ``$\lesssim$'' are not only independent of $h$ but also of $\tau$, $M$ and $\alpha_i$, $i=1,2,3$.

\medskip
\textit{Step (i): intermediate estimate}.
We take $\vv_h=\delta\vy^{M+1}_h\in\mathcal{F}_h(\vy_h)$ in \eqref{gf_bilayer} for $n=M$, use the elementary relation
$2 b (b - a) = (b-a)^2 + b^2 - a^2$ and discard the positive term $(b-a)^2 = a_h(\delta\vy^{M+1}_h,\delta\vy^{M+1}_h)$ to write
\begin{equation}\label{e:first}
  \|\delta\vy^{M+1}_h\|_{H_h^2(\Omega)}^2+\frac{\tau}{2}a_h\big(\vy^{M+1}_h,\vy^{M+1}_h\big)
  -\frac{\tau}{2}a_h\big(\vy^{M}_h,\vy^{M}_h\big)
\leq \tau  \ell\big[\vy_h^M\big](\delta \vy_h^{M+1}).
\end{equation}
Using \eqref{control-defect-bi} with $N=M$ (induction assumption), together with the restriction on $\tau$ and the uniform bound $E_h[\vy_h^0]\le c_0$, we obtain
\begin{equation}\label{eq:yk-C1}
\big|D_h[\vy_h^M](x_T)\big|\le \frac{\alpha_3}{2\alpha_1}\big(c_0+\alpha_2\big)= \delta.
\end{equation}
To simplify the expressions below, we let $\alpha=(\alpha_i)_{i=1}^3$ and  $c_\alpha^2 = 1+\delta$,  whence
\begin{equation}\label{eq:c-alpha}
c_\alpha :=\sqrt{\frac{\alpha_3}{2\alpha_1}\big(c_0+\alpha_2\big)+1}.
\end{equation}
Estimate \eqref{eq:yk-C1} shows that $\vy_h^M \in \A_{h,\delta}$ with $\delta = c_\alpha^2-1$
which in turn implies
\begin{equation*} %\label{crt_iso}
|\partial_i \vy_h^M(x_T) | \leq c_\alpha, \qquad i=1,2, \quad T \in \Th,
\end{equation*}
according to Lemma~\ref{lem:bi_iso_control} (pointwise isometry constraint).
Substituting into \eqref{e:contlinnonlin} yields
\begin{equation*}
\big|\ell\big[\vy_h^M\big](\delta\vy_h^{M+1}) \big|  \leq  c_{nl} c_\alpha \| Z\|_{L^\infty(\Omega)}   \| \delta\vy_h^{M+1} \|_{H^2_h(\Omega)} \big(\|  \vy_h^M\|_{H^2_h(\Omega)} + C_{\vvarphi,\Phi} \big).
\end{equation*}
Inserting this back into \eqref{e:first} and using Young's inequality to absorb the term $\|\delta\vy^{k+1}_h\|_{H_h^2(\Omega)}^2$ in the left hand side of \eqref{e:first}, gives the estimate
\begin{equation} \label{e:second}
\begin{split}
\frac 1 2 \|\delta\vy^{M+1}_h\|_{H_h^2(\Omega)}^2 &+\frac{\tau}{2}a_h\big(\vy^{M+1}_h,\vy^{M+1}_h\big)\\
& \leq \frac{\tau}{2}a_h\big(\vy^{M}_h,\vy^{M}_h\big) +  \tau^2 c_{nl}^2 c_\alpha^2 \| Z\|_{L^\infty(\Omega)}^2 \big(\|  \vy_h^M\|^2_{H^2_h(\Omega)} + C_{\vvarphi,\Phi}^2 \big).
\end{split}
\end{equation}

We next improve upon \eqref{e:second} by deriving a {uniform} bound for the right-hand side.
According to \eqref{eq:yk-C1}, the isometry defect is controlled by $\delta =c_\alpha^2-1$.
Moreover, \eqref{stability} for $N=M$ (induction assumption) implies that $E_h[\vy^M_h] \leq E_h[\vy_h^0]\le c_0 $. Hence, the coercivity estimate \eqref{e:coercfull} reads
\begin{equation}\label{eqn:est-hypo-yk-2}
(2c_{coer})^{-1}\|\vy^M_h\|^2_{H_h^2(\Omega)} \leq c_0 + \tilde c_{coer}c_\alpha^4.
\end{equation}
Estimate \eqref{eqn:coercivity_bi} can be rewritten in terms of the bilinear form $a_h$ as follows
\begin{equation}\label{eq:a_h-conti}
c_{coer}^{-1} \|\vv_h\|^2_{H_h^2(\Omega)} \le \frac12 a_h (\vv_h,\vv_h) \le c_{cont} \|\vv_h\|^2_{H_h^2(\Omega)} \qquad \forall \vv_h \in \V_h^k(\vvarphi,\Phi).
\end{equation}
 Since $|\log h_{\min}|\geq 1$, $\tau$ satisfies $\tau \leq \frac{1}{2\alpha_1} \leq 1$ provided $\alpha_1\geq \frac 12$, which is our first restriction on $\alpha_1$. Combining this with \eqref{eqn:est-hypo-yk-2} and \eqref{eq:a_h-conti} with $\vv_h=\vy^M_h$, and replacing back into \eqref{e:second}, gives the desired intermediate estimate
\begin{equation}\label{intermediate-2}
\frac{1}{2\tau}\|\delta\vy^{M+1}_h\|_{H_h^2(\Omega)}^2+\frac{1}{2}a_h(\vy^{M+1}_h,\vy^{M+1}_h)\le \psi_1(c_\alpha)
\end{equation}
where $\psi_1(c_\alpha) \geq 0$ is a positive increasing function of its argument $c_\alpha$ but
whose specific expression
is irrelevant except that it is independent of $h$, $M$ and depends on $\alpha=(\alpha_i)_{i=1}^3$ only through
the variable $c_\alpha$ rather than separately on each $\alpha_i$.

\medskip
\textit{Step (ii): proof of \eqref{stability} for $N=M+1$.}
In view of \eqref{e:first}, and telescopic cancellation, this requires dealing with the cubic term
$\ell[\vy_h^M](\delta \vy_h^{M+1})$. Using the identity 
\begin{equation*}
\begin{split}
&a^{M+1}b^{M+1}c^{M+1}-a^Mb^Mc^M\\
&\qquad =(a^{M+1}-a^M)b^{M+1}c^{M+1}+a^M(b^{M+1}-b^M)c^{M+1}+a^Mb^M(c^{M+1}-c^M),
\end{split}
\end{equation*}
we deduce
\begin{equation*}
\begin{split}
(a^{M+1} -a^M)b^{M}c^{M} &+a^M(b^{M+1}-b^M)c^{M}+a^Mb^M(c^{M+1}-c^M)\\
&=a^{M+1}b^{M+1}c^{M+1}-a^{M}b^{M}c^{M}-(a^{M+1}-a^M)(b^{M+1}-b^M)c^{M+1}\\
&-(a^{M+1}-a^M)b^{M}(c^{M+1}-c^{M})-a^M(b^{M+1}-b^M)(c^{M+1}-c^M),
\end{split}
\end{equation*}
and rewrite $\ell[\vy_h^M](\delta \vy_h^{M+1})$ as follows:
\begin{equation*}\label{eq:cubic-rewrite}
\begin{split}
 \ell[\vy_h^M](\delta \vy_h^{M+1})&=\sum_{i,j=1}^2 \sum_{T\in \Th} |T| \, \overline{H}_h[\vy^{M+1}_h]_{ij}\cdot(\partial_1\vy_h^{M+1}\times\partial_2\vy_h^{M+1})(x_T) \, Z_{ij} \\
& - \sum_{i,j=1}^2 \sum_{T\in \Th}|T| \, \overline{H}_h[\vy^{M}_h]_{ij}\cdot(\partial_1\vy_h^M\times\partial_2\vy_h^M)(x_T)\, Z_{ij} \\
& - \sum_{i,j=1}^2 \sum_{T\in \Th}|T| \, \overline{H}_h[\delta\vy^{M+1}_h]_{ij}\cdot(\partial_1\delta \vy_h^{M+1}\times\partial_2\vy_h^{M+1})(x_T) \, Z_{ij}\\
& - \sum_{i,j=1}^2 \sum_{T\in \Th}|T| \, \overline{H}_h[\delta\vy^{M+1}_h]_{ij}\cdot(\partial_1\vy_h^{M}\times\partial_2\delta \vy_h^{M+1})(x_T) \, Z_{ij}\\ 
& - \sum_{i,j=1}^2 \sum_{T\in \Th}|T| \, \overline{H}_h[\vy^{M}_h]_{ij}\cdot(\partial_1\delta\vy^{M+1}_h\times \partial_2\delta\vy^{M+1}_h)(x_T) \, Z_{ij}.
\end{split}
\end{equation*} 
We note that the first two terms are exactly the cubic energies $C_h[\vy^{M+1}_h]$ and $C_h[\vy_h^M]$ and together with the bending energies $B_h[\vy^{M+1}_h]=\frac12 a_h\big(\vy^{M+1}_h,\vy^{M+1}_h\big)$ and $B_h[\vy_h^M]=\frac12 a_h\big(\vy^{M}_h,\vy^{M}_h\big)$ in \eqref{e:first} give rise to the
full energies $E_h[\vy^{M+1}_h]$ and $E_h[\vy_h^M]$ in \eqref{stability}. In contrast, the last three terms must be estimated and absorbed into the remaining term $\|\delta\vy^{M+1}_h\|_{H_h^2(\Omega)}^2$ in \eqref{stability}. To this end,
we combine the Friedrichs inequality \eqref{e:friederich} for ${\vy_h}=\vy_h^{M+1}, \vy_h^{M}\in\V_h^k(\vvarphi,\Phi)$
and ${\vy_h}=\delta\vy_h^{M+1}\in\V_h^k(\bz,\bz)$,
%the inverse inequality $\|\vw_h\|_{L^\infty(T)}\lesssim h_T^{-1} \|\vw_h\|_{L^2(T)}$
and Lemma~\ref{L2bound-modified} (stability of $\overline{H}_h[\vv_h]$), to obtain
\begin{equation*}
\begin{split}
 &\|\delta\vy^{M+1}_h\|_{H_h^2(\Omega)}^2+\tau E_h[\vy^{M+1}_h]-\tau E_h[\vy^{M}_h]\\
&\quad \lesssim \tau  \|\delta\vy^{M+1}_h\|_{H^2_h(\Omega)} \|\nabla \delta\vy_h^{M+1}\|_{L^{\infty}(\Omega)}\Big(\|\vy_h^{M+1}\|_{H^2_h(\Omega)}+\|\vy_h^{M}\|_{H^2_h(\Omega)}+C_{\vvarphi,\Phi}\Big),
\end{split}
\end{equation*}
where the symbol $\lesssim$ hides $\| Z\|_{L^\infty(\Omega)}$.
To estimate the $L^\infty$-norm on the right-hand side, we resort to Lemma~\ref{lem:discrete-inverse} (discrete Sobolev inequality)
\begin{align*}
\|\delta\vy^{M+1}_h\|_{H_h^2(\Omega)}^2 &+\tau E_h[\vy^{M+1}_h]-\tau E_h[\vy^{M}_h]\\
& {\le c_Z} \tau |\log h_{\min}| \|\delta\vy^{M+1}_h\|^2_{H^2_h(\Omega)} \Big(\|\vy_h^{M+1}\|_{H^2_h(\Omega)}+\|\vy_h^{M}\|_{H^2_h(\Omega)}+C_{\vvarphi,\Phi} \Big),
\end{align*}
because $| \log h_{\min}| \geq 1$, {with constant $c_Z$ depending on $\| Z\|_{L^\infty(\Omega)}$
and the constants hidden in \eqref{e:friederich} and \eqref{eq:discrete-sobolev-ineq}.}
Moreover, the coercivity estimate \eqref{eqn:coercivity_bi} of $B_h$, written now as
\[
\|\vy_h^{M+1}\|_{H^2_h(\Omega)}^2 \le c_{coer} B_h[\vy_h^{M+1}]
= \frac{c_{coer}}{2} a_h \big(\vy_h^{M+1},\vy_h^{M+1} \big) \le c_{coer} \psi_1(c_\alpha)
\]
according to \eqref{intermediate-2}, together with \eqref{eqn:est-hypo-yk-2}  guarantees that
\begin{align*}\label{eqn:coercivity_bi_Mp1} 
\|\vy_h^{M+1}\|_{H_h^2(\Omega)}+\|\vy_h^{M}\|_{H_h^2(\Omega)}+C_{\vvarphi,\Phi}
\leq \psi_2(c_\alpha),
\end{align*}
where $\psi_2(c_\alpha)$ is a positive increasing function of the argument $c_\alpha$ which is
independent of $h, M$, and the individual parameters $(\alpha_i)_{i=1}^3$.
Substituting back yields
\begin{equation*}
\|\delta\vy^{M+1}_h\|_{H_h^2(\Omega)}^2+\tau E_h[\vy^{M+1}_h]-\tau E_h[\vy^{M}_h] \, {\le c_Z} \tau |\log h_{\min}| \psi_2(c_\alpha) \|\delta\vy^{M+1}_h\|_{H_h^2(\Omega)}^2.
\end{equation*}
Consequently, since $\tau \le (2\alpha_1|\log h_{\min}|)^{-1}$, it remains to choose
$(\alpha_i)_{i=1}^3$ so that
\begin{equation*} %\label{cons-eq}
\psi_2(c_\alpha) \le \alpha_1{{c_Z}^{-1}},
\end{equation*}
to derive the desired estimate \eqref{stability} for $N=M+1$.
The validity of this estimate will be justified in Step (iv).

\medskip
\textit{Step (iii): proof of \eqref{control-defect-bi} for $N=M+1$.}
Since $\delta\vy_h^{n+1}\in\mathcal{F}_{h}(\vy_h^{n})$, expanding $\I[\vy_h^{n+1}](x_T)=[(\nabla\vy_h^{n+1})^T\nabla\vy_h^{n+1}](x_T)$ and using the definition \eqref{linearized-cons} of $\mathcal{F}_{h}(\vy_h^{n})$ yields
\begin{equation}\label{eq:bi-gf-ortho}
{
D_h[\vy_h^{n+1}](x_T) \le D_h[\vy_h^n](x_T) + \big|\I[\delta\vy_h^{n+1}](x_T)\big| \quad \forall \, T\in \Th.
}
\end{equation}
Applying Lemma~\ref{lem:discrete-inverse} (discrete Sobolev inequality), followed
by the discrete Friedrichs inequality \eqref{e:friederich} to estimate
$\|\nabla_h\delta\vy_h^{n+1}\|_{L^2(\Omega)}$, implies
\begin{equation}\label{eq:bi-est-inc}
\big| \I[\delta\vy_h^{n+1}](x_T) \big| \le\|\nabla_h\delta\vy_h^{n+1}\|_{L^{\infty}(T)}^2
{\le c_F} |\log h_{\min}| \|\delta\vy_h^{n+1}\|_{H_h^2(\Omega)}^2
\end{equation}
{where $c_F>0$ is a constant that combines constants hidden in \eqref{eq:discrete-sobolev-ineq} and \eqref{e:friederich},}
because $|\log(h_{\min})|\geq 1$ and $\delta\vy_h^{n+1} \in \V_h^k(\bz,\bz)$.
Summing over $0 \le n \le M$, and using telescopic cancellation along with $\vy_h^0 \in \A_{h,0}$, yields
\begin{equation}\label{e:iso_tmp}
{D_h[\vy_h^{M+1}](x_T) \le c_F} |\log h_{\min}|\sum_{n=0}^{M}\|\delta\vy_h^{n+1}\|_{H_h^2(\Omega)}^2.
\end{equation}
Exploiting the energy decay \eqref{stability}, proved for $N=M+1$ in Step (ii), gives
\begin{equation}\label{e:increment_tmp2}
\sum_{n=0}^{M}\|\delta\vy_h^{n+1}\|_{H_h^2(\Omega)}^2\le 2\tau \big(E_h[\vy^0_h]-E_h[\vy^{M+1}_h] \big).
\end{equation}
We now need a lower bound for the energy $E_h[\vy^{M+1}_h]$, which is a consequence of
\eqref{e:coercfull} provided $\vy_h^{M+1} \in \A_{h,\epsilon}$ for some $\epsilon>0$.
To this end, we resort again to \eqref{eq:bi-gf-ortho}. We first bound the second term on the
right-hand side upon combining the intermediate estimate \eqref{intermediate-2}
for $\|\delta \vy_h^{M+1}\|_{H^2_h(\Omega)}$ with \eqref{eq:bi-est-inc}
\begin{equation*}\label{eq:est-yMp1}
\big|\I[\delta\vy_h^{M+1}](x_T)\big|\leq 2\tau{c_F}|\log h_{\min}| \psi_1(c_\alpha)\le {c_F}\alpha_1^{-1} \psi_1(c_\alpha),
\end{equation*}
because $\tau \le (2\alpha_1|\log h_{\min}|)^{-1}$.
Using this bound in \eqref{eq:bi-gf-ortho}, along with the fact that $\vy_h^M \in \A_{h,\delta}$ for $\delta= c_\alpha^2-1$ according to the induction assumption \eqref{eq:yk-C1}, implies
\begin{equation*}
\big|D_h[\vy_h^{M+1}]\big](x_T)\big|\le c_\alpha^2 -1 + {c_F}\alpha_1^{-1} \psi_1(c_\alpha) =: \epsilon,
\end{equation*}
whence $\vy_h^{M+1}\in\A_{h,\epsilon}$. Inserting $E_h[\vy_h^{M+1}]\ge - \tilde{c}_{coer}(1+\epsilon)^2$
from \eqref{e:coercfull} into \eqref{e:increment_tmp2} gives
\begin{equation*}\label{e:increment_tmp}
\sum_{n=0}^{M}\|\delta\vy_h^{n+1}\|_{H_h^2(\Omega)}^2\le 2\tau \Big(E_h[\vy^0_h] + \tilde{c}_{coer}\big(c_\alpha^2 + {c_F}\alpha_1^{-1} \psi_1(c_\alpha) \big)^2\Big).
%+ C_{\vvarphi,\Phi}^2(1+c_\alpha^2 + \alpha_1^{-1} f(c_\alpha))^2 \|Z\|_{L^\infty(\Omega)}^2).
\end{equation*}
Returning to \eqref{e:iso_tmp}, we arrive at
$$
{D_h[\vy_h^{M+1}](x_T) \leq 2c_F} \tau |\log h_{\min}| \,
\Big(E_h[\vy^0_h] + \tilde{c}_{coer}\big(c_\alpha^2 + {c_F}\alpha_1^{-1} \psi_1(c_\alpha)\big)^2 \Big),
$$
{and we emphasize that $c_F$} is a constant independent of $h$, $M$ and $(\alpha_i)_{i=1}^3$.
The desired control on the isometry defect \eqref{control-defect-bi} is thus guaranteed provided 
\begin{equation*} %\label{cons-eq-2}
  \alpha_3\ge {2c_F},\qquad
  \alpha_2 \geq \tilde{c}_{coer}\big(c_\alpha^2 + {c_F}\alpha_1^{-1} \psi_1(c_\alpha)\big)^2.
\end{equation*}

\medskip
\textit{Step (iv): choice of parameters.} $\alpha = (\alpha_i)_{i=1}^3$ must satisfy
\[
\alpha_1 \geq \frac 1 2,
\quad
\psi_2(c_\alpha) \leq \alpha_1{{c_Z}^{-1}},
\quad
\alpha_3\ge {2c_F},
\quad
\alpha_2 \geq \tilde{c}_{coer}\big(c_\alpha^2 + {c_F}\alpha_1^{-1} \psi_1(c_\alpha)\big)^2 =:
\psi_3(\alpha_1,c_\alpha),
\]
where $c_\alpha$ is defined in \eqref{eq:c-alpha} and the functions $\psi_1,\psi_2$ are positive
and increasing in their arguments. One admissible set of parameters is
\[
\alpha_3 = {2c_F},
\quad
\alpha_2 = \alpha_1^{\frac12},
\]
with $\alpha_1\ge\frac 12$ sufficiently large. In fact, we note that as $\alpha_1 \to \infty$
\[
c_\alpha \downarrow 1,
\quad
\psi_1(c_\alpha) \downarrow \psi_1(1) \ge 0,
\quad
\psi_2(c_\alpha) \downarrow \psi_2(1) \ge 0,
\quad
\psi_3(\alpha_1,c_\alpha) \downarrow \tilde{c}_{coer},
\]
and the condition $\alpha_1 \ge \max\big\{\frac12,{c_Z}\psi_2(c_\alpha),\psi_3(\alpha_1,c_\alpha)^2\big\}$ admits
a solution. This completes the induction argument.
\end{proof}

It is worth realizing that the $\ell^\infty$-control of the isometry defect \eqref{control-defect-bi} implies that $\vy_h\in\mathbb{A}_{h,\delta}$ provided $\tau$ is so small that
\[  
\alpha_3\tau |\log h_{\min}| \big(E_h[\vy_h^0] + \alpha_2 \big) \le \delta,
\]
where $\mathbb{A}_{h,\delta}$ is defined in \eqref{admissible-discrete}.
This property is novel in the context of DG approximations \cite{bonito2020ldg,bonito2020ldg-na,bonito2020discontinuous,bonito2019dg,ntogkas2018non}, but is inspired by a similar one at element vertices shown by S. Bartels and Ch. Palus for Kirchhoff elements \cite{bartels2020stable}. It is responsible for the explicit treatment of the cubic term in \eqref{gf_bilayer}, which in turn converts \eqref{gf_bilayer} into a {\it linear system} to solve for $\delta\vy_h^{n+1}$. The fact that $H^2(\Omega)$ does not embed in $W^1_\infty(\Omega)$ in two dimensions, but is borderline instead, explains the critical nature of the estimates \eqref{stability} and \eqref{control-defect-bi}. The discrete $H^2$-metric of the gradient flow \eqref{gf_bilayer}, combined with Lemma \ref{lem:discrete-inverse} (discrete Sobolev inequality), makes it possible to exploit this borderline structure discretely at the expense of a log term. No weaker metric for the gradient flow than $H^2$ would allow for $\ell^\infty$-control of the isometry defect.

%--------------------------------------------------------------------------------------------
\subsection{Lagrange multipliers for the isometry constraint}\label{sec:inf-sup_bi}
%--------------------------------------------------------------------------------------------

We enforce tangential variations $\delta\vy^{n+1}_h\in\mathcal{F}_{h}(\vy^n_h)$ at each step of the gradient flow using  Lagrange multipliers within the space of symmetric piecewise constant tensors
\begin{equation*}
\Lambda_h:=\left\{\lambda_h:\Omega\to\mathbb{R}^{2\times2}: \,\, \lambda_h^T=\lambda_h, \,\, \lambda_h\in\big[\V_h^0\big]^{2\times 2}\right\}.
\end{equation*}
 For any $\vw_h \in\V^k_h(\vvarphi,\Phi)$, we define the bilinear form $b_h(\vw_h ; \cdot,\cdot)$ on $\V^k_h(\mathbf{0},\mathbf{0})\times\Lambda_h$ as
\begin{equation}\label{def:bh-bi}
b_h(\vw_h;\vv_h,\vmu_h):=\sum_{T\in\Th}|T| \, L[\vv_h;\vw_h](x_T):\vmu_h,
\end{equation}
where the linearized isometry constraint $L[\vv_h;\vw_h]$ is given in \eqref{linear-discrete}.
Note that $b_h$ is continuous with a continuity constant uniform in $h$
\begin{equation}\label{eq:bh-continuity}
|b_h(\vw_h;\vv_h,\vmu_h)| \lesssim \|\vw_h\|_{H^2_h(\Omega)}\|\vv_h\|_{H^2_h(\Omega)}\|\mu_h\|_{L^2(\Omega)},
\end{equation}
thanks to the inverse inequality $|L[\vv_h;\vw_h](x_T)| \lesssim h_T^{-1} \|L[\vv_h;\vw_h]\|_{L^2(T)}$ and the discrete Sobolev inequality $\| \nabla_h \vw_h \|_{L^4(\Omega)} \lesssim \| \vw_h \|_{H^2_h(\Omega)}$ valid for all $\vw_h \in [\mathbb V_h^k]^3$, see \cite[(6.9)]{bonito2020ldg-na}.
We also observe that $b_h(\vw_h; \vv_h,\vmu_h)=0$ for all $\vmu_h\in\Lambda_h$ implies $\vv_h\in\mathcal{F}_h(\vw_h)$ according to \eqref{linearized-cons}. Therefore, in each step of the gradient flow augmented with the linearized metric constraint, we seek $(\delta\vy_h^{n+1},\vla_h^{n+1})\in\V^k_h(\mathbf{0},\mathbf{0})\times \Lambda_h$ such that
\begin{equation}\label{gf:system-bi}
\begin{split}
  \tau^{-1}(\delta\vy_h^{n+1},\vv_h)_{H_h^2(\Omega)} \!& +\! a_h(\delta\vy_h^{n+1},\vv_h) \!+\! b_h(\vy^n_h;\vv_h,\vla_h^{n+1}) \\
  &+b_h(\vy^n_h;\delta\vy_h^{n+1},\vmu_h) \!=\! \ell[\vy_h^n](\vv_h) \!-\! a_h(\vy_h^n,\vv_h),
\end{split}
\end{equation}
for all $(\vv_h,\vmu_h)\in\V^k_h(\mathbf{0},\mathbf{0})\times \Lambda_h$. 

The proposed strategy is summarized in Algorithm \ref{algo:main_GF_bi}. \looseness=-1
\RestyleAlgo{boxruled}
\begin{algorithm}[htbp]
	\SetAlgoLined
	Given a pseudo-time step $\tau>0$ and a target tolerance $tol$\;
	Choose an initial guess $\vy_h^0\in\A_{h,0}$\;
	\While{$\tau^{-1}\big|E_h[\vy_h^{n+1}]-E_h[\vy_h^{n}]\big|>$tol}{
		\textbf{Solve} \eqref{gf:system-bi} for $(\delta\vy_h^{n+1},\vla_h^{n+1})\in\V^k_h(\mathbf{0},\mathbf{0})\times \Lambda_h$\;
		\textbf{Update} $\vy_h^{n+1} = \vy_h^{n}+\delta\vy_h^{n+1}$\;
	}
	\caption{(discrete-$H^2$ gradient flow with Lagrange multipliers)} \label{algo:main_GF_bi}
\end{algorithm}

It is worth pointing out that utilizing Lagrange multipliers is ubiquitous to enforce linearized metric constraints \cite{bartels2013approximation,bartels2018modeling,bartels2017bilayer,bartels2020stable,bonito2020ldg,bonito2020ldg-na,bonito2019dg,bonito2020discontinuous}.
In particular, the system \eqref{gf:system-bi} is solved using the \emph{Schur complement approach}, whose performance depends on  the inf-sup stability of $b_h$; see e.g. \cite{ntogkas2018non,berrone2019optimal} and refer to Section~\ref{s:implem} for additional details on the practical implementation.
Unfortunately, there are no results available in the literature guaranteeing a uniform inf-sup not even for the continuous problem.
In this section, we make a first step towards a better understanding of the situation in that we derive a sub-optimal estimate of the {discrete} inf-sup constant. 
We start with a linear algebra lemma.

\begin{lemma}[solvability of a matrix equation]\label{lem:solvability-matrix}
Given a $2\times2$ symmetric matrix $C$ and a full-rank  $3\times2$ matrix $B$, there exists a $3\times2$ matrix $A$ that solves the equation
\begin{equation}\label{matrix-eqn}
(A^TB+B^TA):C=|C|^2
\end{equation}
and satisfies $|A|\le\frac{|C|}{2\sigma_2(B)}$, where $|\cdot|$ denotes the Frobenius norm of matrices and
$\sigma_2(B)>0$ is the smallest singular value of $B$.
\end{lemma}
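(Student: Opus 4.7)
The plan is to exhibit an explicit solution $A$ rather than argue by a dimension count or duality. Since $B$ has full rank, the $2\times 2$ matrix $B^T B$ is symmetric positive definite and hence invertible, so one can consider the candidate
$$
A := \tfrac{1}{2}\, B (B^T B)^{-1} C.
$$
This choice is motivated by the observation that the map $A \mapsto A^T B + B^T A$ admits a natural right-inverse on symmetric matrices given by composition with $B(B^T B)^{-1}$, since $B^T \cdot B(B^T B)^{-1} = I_2$.

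First I would verify that the constraint \eqref{matrix-eqn} holds with equality in the stronger form $A^T B + B^T A = C$. A direct computation, using the symmetry of $C$ and of $(B^T B)^{-1}$, gives
$$
A^T B = \tfrac{1}{2}\, C (B^T B)^{-1} B^T B = \tfrac{1}{2}\, C,
\qquad
B^T A = \tfrac{1}{2}\, B^T B (B^T B)^{-1} C = \tfrac{1}{2}\, C,
$$
whence $A^T B + B^T A = C$ and consequently $(A^T B + B^T A) : C = C : C = |C|^2$.

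Next I would establish the norm bound by computing $\|B(B^T B)^{-1}\|_{\text{op}}$ via the singular value decomposition of $B$. Writing $B = U\Sigma V^T$ with $U\in\mathbb{R}^{3\times 3}$, $V\in\mathbb{R}^{2\times 2}$ orthogonal and $\Sigma\in\mathbb{R}^{3\times 2}$ having diagonal entries $\sigma_1(B)\ge \sigma_2(B)>0$, a short calculation shows
$$
B(B^T B)^{-1} = U\, \Sigma\, \mathrm{diag}\bigl(\sigma_1(B)^{-2},\sigma_2(B)^{-2}\bigr)\, V^T,
$$
whose nonzero singular values are $\sigma_1(B)^{-1}$ and $\sigma_2(B)^{-1}$. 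Hence the operator norm equals $\sigma_2(B)^{-1}$. Combining this with the standard inequality $|MX|\le \|M\|_{\text{op}}\,|X|$ relating the operator and Frobenius norms yields
$$
|A| \;=\; \tfrac{1}{2}\,\bigl|B(B^T B)^{-1} C\bigr| \;\le\; \frac{|C|}{2\sigma_2(B)},
$$
which is the desired estimate.

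I do not anticipate any real obstacle: once the explicit formula for $A$ is guessed, both the algebraic identity and the norm bound reduce to routine SVD manipulations. The only conceptual point is recognizing that the $3\times 2$ matrix $B(B^T B)^{-1}$, which is the Moore--Penrose pseudoinverse of $B^T$, has operator norm exactly $1/\sigma_2(B)$; everything else is mechanical.
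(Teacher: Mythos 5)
Your proof is correct, and it takes a genuinely different route from the paper's. The paper uses the cyclic property of the trace to reduce the constraint $(A^TB+B^TA):C = |C|^2$ to the scalar linear condition $A : BC = \tfrac{1}{2}|C|^2$, then picks $A$ proportional to $BC$ (the minimum-Frobenius-norm solution of that single linear constraint) and bounds $|BC|$ from below by $\sigma_2(B)|C|$ via the SVD. You instead exhibit a solution of the much stronger \emph{matrix} equation $A^TB + B^TA = C$ by setting $A = \tfrac12 B(B^TB)^{-1}C$, i.e. composing with the Moore--Penrose pseudoinverse of $B^T$, and then read off the operator norm $\|B(B^TB)^{-1}\|_{\mathrm{op}} = 1/\sigma_2(B)$ from the SVD. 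Both arguments land on exactly the bound $|A| \le |C|/(2\sigma_2(B))$. The paper's approach is more parsimonious in that it only ever works with the one scalar constraint actually needed; yours buys a slightly stronger conclusion (the full linearized-isometry equation is solved exactly, not merely its $C$-contraction) at essentially no extra cost, and makes the structural reason for solvability -- existence of a right inverse to $X \mapsto X^TB + B^TX$ on symmetric matrices -- visible. Either proof is acceptable.
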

\begin{proof}
Using the cyclic properties of the trace operator yields
\begin{align*}
A^TB:C=\tr(B^TAC)=\tr(CA^TB)=\tr(A^TBC)=B^TA:C=A:BC,
\end{align*}
whence \eqref{matrix-eqn} is equivalent to
\begin{equation*}
A:BC=\frac{1}{2}|C|^2.
\end{equation*}
Let $B=U\Sigma V^T$ be the singular value decomposition of $B$, where $U\in\mathbb{R}^{3\times3}$
and $V=\mathbb{R}^{2\times2}$ are
orthogonal matrices, and $\Sigma=[\sigma_1(B),0;0,\sigma_2(B);0,0]\in\mathbb{R}^{3\times2}$ carries the singular values
$\sigma_1(B)\ge\sigma_2(B)\ge0$ of $B$. Since $B$ is full-rank, we deduce that $\sigma_2(B)>0$ and
\begin{align*}
|BC|^2=|U\Sigma V^TC|^2=|\Sigma\overline{C}|^2\ge\sigma_2(B)^2|\overline{C}|^2=\sigma_2(B)^2|C|^2,
\end{align*}
where $\overline{C}=V^TC$ and thus $|\overline{C}|=|C|$. We can now assume that $C\ne0$,
for otherwise $A=0$ solves \eqref{matrix-eqn}. We then realize that $|BC|>0$ and
\begin{equation*}
A=\frac{(BC)|C|^2}{2|BC|^2}
\end{equation*}
is clearly a solution to \eqref{matrix-eqn} as well as
\begin{equation*}
|A|=\frac{|C|^2}{2|BC|}\le\frac{|C|}{2\sigma_2(B)},
\end{equation*} 
which is the desired estimate $|A|\lesssim|C|$.
\end{proof}

The following sub-optimal estimate of the discrete inf-sup constant is a consequence of the previous lemma.
Since only the gradient of $\vv_h\in\V_h^k(\bz,\bz)$ appears in \eqref{def:bh-bi}, but the underlying norm of
$\V_h^k(\bz,\bz)$ is the discrete $H^2$-norm, it seems natural to consider a negative Sobolev norm of order $-1$ for
the space of Lagrange multipliers $\Lambda_h$. However, the fact that $\nabla_h\vv_h$ is
discontinuous makes it problematic to pair it with a distribution in a negative Sobolev space of order $-1$. 
This leads to the embedding of $\Lambda_h$ into $[L^2(\Omega)]^{2\times2}$, which is somehow responsible for
suboptimality.
  
\begin{thm}[discrete inf-sup constant]\label{inf-sup-bi}
For any $n\ge 0$ and $\vy_h^n\in\A_{h,\delta}$, there exists a constant $\beta$ independent of $n$ and $h$ such that
$\beta_h=\beta h_{\min}>0$ satisfies
\begin{equation} \label{eqn:inf_sup-bi}
		\inf_{\vmu_h\in\Lambda_h}\sup_{\vv_h\in\V_h^k(\bz,\bz)}\frac{b_h(\vy_h^n;\vv_h,\vmu_h)}{\|\vv_h\|_{H_h^2(\Omega)}\|\vmu_h\|_{L^2(\Omega)}} \ge \beta_h.
\end{equation}
\end{thm}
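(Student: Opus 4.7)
The plan is to construct a Fortin-type test function elementwise so that $b_h(\vy_h^n;\vv_h,\vmu_h)$ reproduces $\|\vmu_h\|^2_{L^2(\Omega)}$ exactly and $\|\vv_h\|_{H_h^2(\Omega)}$ is controlled in terms of $\|\vmu_h\|_{L^2(\Omega)}$. The factor $h_{\min}$ in the inf-sup bound will emerge from the second estimate, not the first.

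For the reproduction part I would apply Lemma~\ref{lem:solvability-matrix} element by element. Setting $B_T := \nabla\vy_h^n(x_T) \in \mathbb{R}^{3\times 2}$ and $C_T := \vmu_h|_T \in \mathbb{R}^{2\times 2}$, the lemma yields a matrix $A_T$ satisfying $(A_T^T B_T + B_T^T A_T):C_T = |C_T|^2$ and $|A_T| \le |C_T|/(2\sigma_2(B_T))$. This requires $B_T$ to be full-rank, which follows from Lemma~\ref{lem:bi_iso_control}: since $B_T^T B_T = \I[\vy_h^n](x_T)$ is $\delta$-close to $I_2$, one has $\sigma_2(B_T) \gtrsim 1$ uniformly for $\delta$ bounded strictly below $1$. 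I then take the test function to be the elementwise affine map
\[
\vv_h|_T(x) := A_T\,(x-x_T),
\]
which belongs to $\V_h^k(\bz,\bz)$ because $k \ge 2$ and no essential Dirichlet condition is enforced. Since $\nabla\vv_h(x_T) = A_T$ and $D^2_h\vv_h \equiv 0$, substituting into the definition \eqref{def:bh-bi} of $b_h$ gives directly
\[
b_h(\vy_h^n;\vv_h,\vmu_h) = \sum_{T\in\Th} |T|\,(A_T^T B_T + B_T^T A_T):C_T = \sum_{T\in\Th} |T|\,|C_T|^2 = \|\vmu_h\|^2_{L^2(\Omega)}.
\]

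The main step is the bound on $\|\vv_h\|_{H_h^2(\Omega)}$. Since $D^2_h\vv_h$ vanishes, only the edge contributions in \eqref{def:H2norm} matter: across an interior edge $e\subset\partial T^-\cap\partial T^+$ one has $[\nabla_h\vv_h]|_e = A_{T^-}-A_{T^+}$ and $|[\vv_h]|_e| \lesssim (|A_{T^-}|+|A_{T^+}|)\,h$, with analogous but simpler expressions on boundary edges since $\vvarphi=\bz$ and $\Phi=\bz$. A direct scaling argument using shape regularity and $|x - x_T| \lesssim h_T$ on any edge of $T$ then produces $\|\vv_h\|^2_{H_h^2(\Omega)} \lesssim \sum_{T\in\Th} |A_T|^2 \lesssim \sum_{T\in\Th} |C_T|^2$. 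Converting this elementwise $\ell^2$-norm into the weighted $L^2$-norm $\|\vmu_h\|^2_{L^2(\Omega)} = \sum_T |T|\,|C_T|^2$ costs a factor $h_{\min}^{-2}$ because $|T| \ge c\,h_{\min}^2$, whence $\|\vv_h\|_{H_h^2(\Omega)} \lesssim h_{\min}^{-1}\|\vmu_h\|_{L^2(\Omega)}$. Combining this with the reproduction identity yields \eqref{eqn:inf_sup-bi} with $\beta_h = \beta h_{\min}$ for some $\beta$ depending only on shape regularity and on a lower bound for $\sigma_2(B_T)$.

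The main obstacle, and the source of the $h_{\min}$ suboptimality, is precisely this last $\ell^2 \to L^2$ conversion. Since $\Lambda_h$ is endowed with the $L^2$-norm while $\vv_h$ carries \emph{discontinuous} gradients, it is not possible to sharpen the pairing through a proper negative-order dual norm without either a smoother multiplier space or a more elaborate test function whose edge jumps are themselves small relative to $|A_T|$. This matches the intuition flagged immediately before the theorem and is consistent with the suboptimality documented numerically.
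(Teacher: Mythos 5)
Your proposal is correct and follows the paper's proof essentially verbatim: the same application of Lemma~\ref{lem:solvability-matrix} to construct $A_T$ elementwise (using $\vy_h^n\in\A_{h,\delta}$ to bound $\sigma_2(\nabla\vy_h^n(x_T))$ away from zero), the same choice of piecewise affine test function $\vv_h|_T=A_T(x-x_T)$, the same observation that $D_h^2\vv_h\equiv 0$ so only the jump terms in $\|\cdot\|_{H_h^2(\Omega)}$ contribute, and the same final $\ell^2$-to-$L^2$ conversion costing $h_{\min}^{-1}$. The only cosmetic difference is that you bound the edge jumps by direct scaling while the paper routes through an element-wise trace inequality and Poincar\'e inequality to reach $\sum_T \h^{-2}\|\nabla\vv_h\|_{L^2(T)}^2$; both give the same estimate $\|\vv_h\|_{H_h^2(\Omega)}\lesssim h_{\min}^{-1}\|\vmu_h\|_{L^2(\Omega)}$.
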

\begin{proof}
We proceed in two steps: we first construct a suitable $\vv_h$ and next show \eqref{eqn:inf_sup-bi}.

\smallskip
{\it Step 1: Construction of $\vv_h$.}
Given $\vmu_{h}\in\Lambda_h$, let $\vmu_{h,T}=\vmu_{h}\vert_{T}$ be the constant symmetric $2\times2$ restriction
of $\vmu_{h}$ to any element $T\in\Th$.
Thanks to Lemma \ref{lem:solvability-matrix} (solvability of a matrix equation),
there exists a $3\times2$ constant matrix $A_T$ such that
\begin{equation*}
L[A_T;\vy_h^n](x_T) : \vmu_{h,T} =
(A_T^T\nabla\vy_h^n(x_T)+\nabla\vy_h^n(x_T)^TA_T) : \vmu_{h,T} =| \vmu_{h,T}|^2,  
\end{equation*}
and $|A_T|\le\frac{|\vmu_{h,T}|}{2\sigma_{\min}(\nabla\vy_h^n(x_T))}$. Let $\lambda_{\min}:\mathbb{M}^{2\times2}\to\mathbb{R}$ be the smallest eigenvalue function defined over the space of symmetric matrices $\mathbb{M}^{2\times2}$ into $\mathbb{R}$, which turns out to be continuous with respect to any norm.
In particular, because $\vy_h^n\in\A_{h,\delta}$ we have
\begin{equation*}
  D_h[\vy_h^n](x_T) = 
  \big|\I[\vy_h^n](x_T)-I_2\big|\le\delta.
\end{equation*}
and there is a constant $c$ independent of $h$ and $n$ so that
$\big|\lambda_{\min}\big(\I[\vy_h^n](x_T)-I_2\big)\big|\le c\delta$, or
\begin{equation*}
\lambda_{\min}\big(\I[\nabla\vy_h^n](x_T)\big) \ge 1-c\delta,
\end{equation*}
Consequently, for $\delta$ sufficiently small we deduce
\[
\sigma_{\min}\big((\nabla\vy_h^n(x_T)\big) = \big(\lambda_{\min}\big(\I[\vy_h^n](x_T)\big)\big)^{\frac12}
\ge \big( 1-c\delta \big)^{\frac12},
\]
is bounded away from $0$ and we have $|A_T|\le\frac{|\mu_{h,T}|}{2(1-c\delta)^{\frac12}}$.
We finally define $\vv_h(\vx)\big|_T:=A_T(\vx-\vx_T)$ on each $T\in\Th$,
where $\vx_T$ is the barycenter of $T$, and observe that $\vv_h\in[\V^k_h]^3$
for $k\ge2$ and $\nabla\vv_h\big|_T=A_T$.

\medskip
{\it Step 2: Discrete inf-sup property.} We first compute
\begin{align*}
b_h(\vy_h^n;\vv_h,\vmu_h) = \sum_{T\in\Th}|T| \, L[\vv_h;\vy_h^n](x_T) : \vmu_{h,T}
=\sum_{T\in\Th}|\vmu_{h,T}|^2|T|=\|\vmu_h\|^2_{L^2(\Omega)}.
\end{align*} 
Since $D_h^2\vv_h=0$ for $\vv_h$ piecewise linear, combining a trace inequality with the Poincar\'e inequality on each element $T$ gives
\begin{align*}
\|\vv_h\|_{H^2_h(\Omega)}^2 &= \sum_{e\in\E_h^a}\|\h^{-3/2}\jump{\vv_h}\|_{L^2(e)}^2+\|\h^{-1/2}\jump{\nabla\vv_h}\|_{L^2(e)}^2\\
&\lesssim \sum_{T\in\Th}\h^{-4}\|\vv_h\|_{L^2(T)}^2+\h^{-2}\|\nabla\vv_h\|_{L^2(T)}^2 \lesssim
\sum_{T\in\Th}\h^{-2}\|\nabla\vv_h\|_{L^2(T)}^2
\end{align*}
due to the the fact that
$\vv_h\in\V^k_h(\bz,\bz)$ has vanishing mean value on $T$. Therefore,
\begin{equation}\label{eq:inf-sup-est-H2}
\begin{aligned}
\|\vv_h\|_{H^2_h(\Omega)}^2 
  \lesssim \sum_{T\in\Th}\h^{-2}\int_T|A_T|^2 \lesssim h_{\min}^{-2}(1-c\delta)^{-1}\|\vmu_h\|^2_{L^2(\Omega)}.
\end{aligned}
\end{equation}
In summary, we have shown that for every $\vmu_{h} \in \Lambda_h$,
there exists $\vv_h \in \V^k_h(\bz,\bz)$ such that
$b_h(\vy_h^n;\vv_h,\vmu_h) = \|\vmu_h\|^2_{L^2(\Omega)}$ and
$\|\vv_h\|_{H^2_h(\Omega)} \lesssim h_{\min}^{-1} \|\vmu_h\|_{L^2(\Omega)}$.
This is the desired inf-sup condition in disguised. 
\end{proof}

%%%%%%%%%%%%%%%%%%%%%%%%%%%%%%%%%%%%%%%%%%%%%%%%%%%%%%%%%%%%%%%%%%%%%%%%%%%%%%%%%
\section{Numerical experiments}\label{sec:numerical} 
%%%%%%%%%%%%%%%%%%%%%%%%%%%%%%%%%%%%%%%%%%%%%%%%%%%%%%%%%%%%%%%%%%%%%%%%%%%%%%%%%

In this section we present several numerical experiments, some motivated by computations \cite{bartels2017bilayer,bartels2018modeling,bartels2020stable,bonito2020discontinuous} and other by lab experiments \cite{alben2011edge,janbaz2016programming,menges2012material,krieg2016hygroskin,achimmenges,menges2015performative,reichert2015meteorosensitive,simpson2010capture}. We carry out simulations with several {\it spontaneous curvature} matrices $Z$ and both \emph{Dirichlet and free boundary conditions}, so as to capture a variety of insightful configurations exhibiting large bending deformations. We consider the effect of different aspect ratios of rectangular domains. We also explore properties of a novel model inspired by \cite{BBH2021Folding}, which allows folding across curved creases (bilayer origami). Our numerical simulations illustrate the computational performance of our algorithm.     

%-------------------------------------------------------------------------------
\subsection{Implementation}\label{s:implem}
%-------------------------------------------------------------------------------
We start with a few comments on the implementation of the gradient flow \eqref{gf:system-bi} and Algorithm \ref{algo:main_GF_bi}. 

\medskip\noindent
{\it Saddle-point structure.}  
We resort to a \emph{Schur complement method} to solve the discrete problem \eqref{gf:system-bi}. We refer to \cite{bonito2020ldg} for full implementation details of a similar linear algebra structure, but emphasize here how Theorem \ref{inf-sup-bi} (inf-sup stability) guarantees solvability and affects the solver efficiency in the spirit of \cite[Lemma 3.1]{berrone2019optimal}. 

To make explicit the Schur complement matrix and deduce its condition number, we denote by $\{\vvarphi_i\}_{i=1}^N$ a basis for $\V^k_h(\mathbf{0},\mathbf{0})$ and by $\{\vpsi_i\}_{i=1}^M$ an orthonormal basis for $\Lambda_h$.
The matrix representations of the bilinear forms $A_h(\cdot,\cdot):=\tau^{-1} (\cdot,\cdot)_{H^2_h(\Omega)} + a_h(\cdot,\cdot)$ and $b_h(\vy_h^n;\cdot,\cdot)$ used to define the gradient flow \eqref{gf:system-bi} are thus given by 
$$
  \vA:= \big( A_h(\vvarphi_j,\vvarphi_i)\big)_{i,j=1}^N, \qquad
  \vB_n:= \big( b_h(\vy_h^n;\vvarphi_j,\vpsi_i) \big)_{i=1,j=1}^{M,N}.
$$
With this notation, the Schur complement matrix reads $\vS_n:=\vB_n\vA^{-1}\vB_n^T$ and satisfies
\begin{equation*}
\begin{aligned}
(\vS_n\vm,\vm)&=(\vA^{-1/2}\vB_n^T\vm,\vA^{-1/2}\vB_n^T\vm)=\sup_{\vw\in\mathbb{R}^N}\left(\frac{(\vw,\vA^{-1/2}\vB_n^T\vm)}{\|\vw\|_2}\right)^2 \\
&=\sup_{\vv\in\mathbb{R}^N}\left(\frac{(\vv,\vB_n^T\vm)}{\|\vA^{1/2}\vv\|_2}\right)^2=\sup_{\vv_h \in \V^k_h(\mathbf{0},\mathbf{0})}\frac{b_h(\vy_h^n;\vv_h,\vmu_h)^2}{A_h(\vv_h,\vv_h)},
\end{aligned}
\end{equation*}
where  $\vmu_h := \sum_{i=1}^M m_i \vpsi_i \in \Lambda_h, \vv_h:=\sum_{j=1}^N v_j\vvarphi_j\in\V^k_h(\mathbf{0},\mathbf{0})$ with $\vm=(m_i)_{i=1}^M, \vv=(v_j)_{j=1}^N$, and $\|\cdot\|_2 = (\cdot,\cdot)^{1/2}$ is the Euclidean norm in $\mathbb R^N$.
On one hand, the continuity \eqref{eq:bh-continuity} of $b_h$ and the coercivity estimate \eqref{eqn:coercivity_bi} for $B_h[\vv_h]=\frac12a_h(\vv_h,\vv_h)$ yield
$$
(\vS_n\vm,\vm) \lesssim \tau\| \vy_h^n \|_{H^2_h(\Omega)}^2 \| \vmu_h \|_{L^2(\Omega)}^2 \lesssim \tau\| \vmu_h \|_{L^2(\Omega)}^2 = \tau\| \vm \|_2^2,
$$
because $\|\vy_h^n\|_{H^2_h(\Omega)}\lesssim 1$ in view of the energy stability \eqref{stability} satisfied by $\vy_h^n$ and the coercivity of total energy \eqref{e:coercfull}.
On the other hand, the inf-sup stability \eqref{eqn:inf_sup-bi} and the continuity estimate \eqref{eqn:coercivity_bi} for $B_h[\vv_h]=\frac12a_h(\vv_h,\vv_h)$ imply
$$
\tau h_{\min}^{2} \| \vm \|_2^2 = \tau h_{\min}^{2} \| \vmu_h \|^2_{L^2(\Omega)} \lesssim (\vS_n\vm,\vm).
$$
Combining these two inequalities yields an estimate for the condition number of $\vS_n$
\begin{equation}\label{eq:cond-num-est}
\kappa(\vS_n):=\max_{\vm\in\mathbb{R}^M}\frac{(\vS_n\vm,\vm)}{\|\vm\|_2^2}\left(\min_{\vm\in\mathbb{R}^M}\frac{(\vS_n\vm,\vm)}{\|\vm\|_2^2}\right)^{-1} \lesssim h_{\min}^{-2}.
\end{equation}

Estimate \eqref{eq:cond-num-est} shows that the saddle-point system is invertible but ill-conditioned.
We use a conjugate gradient (CG) iterative solver for the numerical experiments below. Classical convergence theory for CG asserts that the number of iterations to achieve a desired accuracy is of order $\sqrt{\kappa(\vS_n)}$ \cite[Theorem 3.1.1]{greenbaum1997iterative}.
Our numerical experiments reveal that the number of iterations needed in the CG solver roughly behaves like $h_{\min}^{-1}$, which is consistent with \eqref{eq:cond-num-est}. 

We emphasize that solving the linear system \eqref{gf:system-bi} by the Schur complement method for several steps of the gradient flow remains the bottle neck in terms of computing time.
We leave the design of suitable preconditioners open.

\medskip\noindent
{\it Assembly.}
Since the scalar product $\langle\cdot,\cdot\rangle_{H_h^2(\Omega)}$ and bilinear form $a_h$ do not change in the course of the gradient flow, we assemble them once for all before the main loop. In contrast, we assemble the bilinear form $b_h(\vy_h^n;\cdot,\cdot)$ and right hand side $\ell[\vy_h^n](.)$ at each step of the loop as they depend on the previous iterate $\vy^n_h$. Computing the discrete Hessian $H_h[\vy_h]$ is the most expensive part in the assembly process, as it requires solving the linear systems \eqref{def:lift_re} and \eqref{def:lift_be} for lifting operators. In order to save computing time, we find the discrete Hessian of each basis function at the beginning of the simulation and store its values for later use; this pre-processing drastically decreases the assembly time.   

\medskip\noindent
{\it Software and data.}
We implement our LDG method within the software platform \textrm{deal.ii} \cite{bangerth2007} and visualize
the outcome with \textrm{paraview} \cite{ayachit2015}. 
For all the simulations, we fix the polynomial degree $k$ of the deformation $\vy_h$ and the two liftings $l_1,l_2$ of the discrete Hessian $H_h[\vy_h]$, as well as the stabilization parameters $\gamma_1,\gamma_2$  to be
\[
k=l_1=l_2=2,
\quad
\gamma_0=\gamma_1=1.
\]
Recall that LDG is stable for any positive choice of parameters $\gamma_1,\gamma_2$, which contrasts with IPDG that requires  $\gamma_1,\gamma_2$ large for stability purposes \cite{bonito2019dg,bonito2020discontinuous}.

In the following numerical simulations, we consider both clamped Dirichlet ($\Gamma_D \ne \emptyset$) and free boundary conditions ($\Gamma_D= \emptyset$). For the latter, the discrete equation \eqref{gf_bilayer} is no longer well-defined. To fix the system kernel, we add an $L^2$-term to the metric $(\cdot,\cdot)_{H^2_h(\Omega)}$, while all other implementation aspects are similar to the case $\Gamma_D\ne\emptyset$. We refer to \cite{bonito2020ldg} for implementation details of free boundary conditions.

In either situation, a natural choice of initial deformation is that of a flat plate
\[
\vy_h^0(x_1,x_2)=(x_1,x_2,0) \quad\forall \, (x_1,x_2)\in\Omega,
\]
and satisfies clamped boundary conditions and the isometry constraint $\I[\vy_h^0]=I_2$ everywhere. This is much simpler than prestrained plates \cite{bonito2020ldg,bonito2020ldg-na}, which require preprocessing of both
boundary condition and metric constraint to construct suitable initial deformations for LDG to start.

%-------------------------------------------------------------------------------
\subsection{Clamped plate: Isotropic curvature}\label{sec:numerical-clamped}
%-------------------------------------------------------------------------------
We consider a rectangular plate $\Omega=(-5,5)\times(-2,2)$, clamped on the side $\{-5\}\times[-2,2]$, with isotropic spontaneous curvature $Z=I_2$. The deformation with minimal energy corresponds to a cylinder of radius $1$ and energy $20$ \cite{schmidt2007minimal}. This is confirmed by our simulations in Fig.~\ref{fig:bi-cylinder}, which displays iterations of the discrete gradient flow with {$1024$ elements} ($30720$ dofs), $\tau=5\times10^{-3}$ and $tol=10^{-4}$ in Algorithm \ref{algo:main_GF_bi}, as in \cite{bartels2017bilayer}.

We notice that surface self-intersecting develop during the relaxation dynamics of Algorithm \ref{algo:main_GF_bi};
this is similar to \cite{bartels2020stable} but different from \cite{bonito2020discontinuous}. Moreover, it takes fewer iterations for Algorithm \ref{algo:main_GF_bi} to reach the cylindrical equilibrium configuration, with the same or even smaller time step $\tau$, than FEMs in \cite{bartels2017bilayer,bartels2020stable,bonito2020discontinuous}.

Moreover, we keep the time step $\tau=5\times10^{-3}$ fixed and consider two quasi-uniform meshes with $256$ and $1024$ elements. We obtain bending energies $E_h=16.8627$ and $E_h=17.8038$ ($16\%$ and $11\%$ relative error) respectively, which exhibit smaller errors than the corresponding ones $E_h=15.961$ and $E_h=16.544$ with the Kirchhoff FEM of \cite{bartels2017bilayer} for the same mesh-size and time step. In addition, the energy error compares favorably with the new Kirchhoff FEM in \cite{bartels2020stable}, which computes with $Z=2.5I_2$ and produces a $36\%$ relative error even with a finer mesh of $5120$ triangular elements.         

\begin{figure}[htbp]
	\begin{center}
		\includegraphics[width=8.5cm]{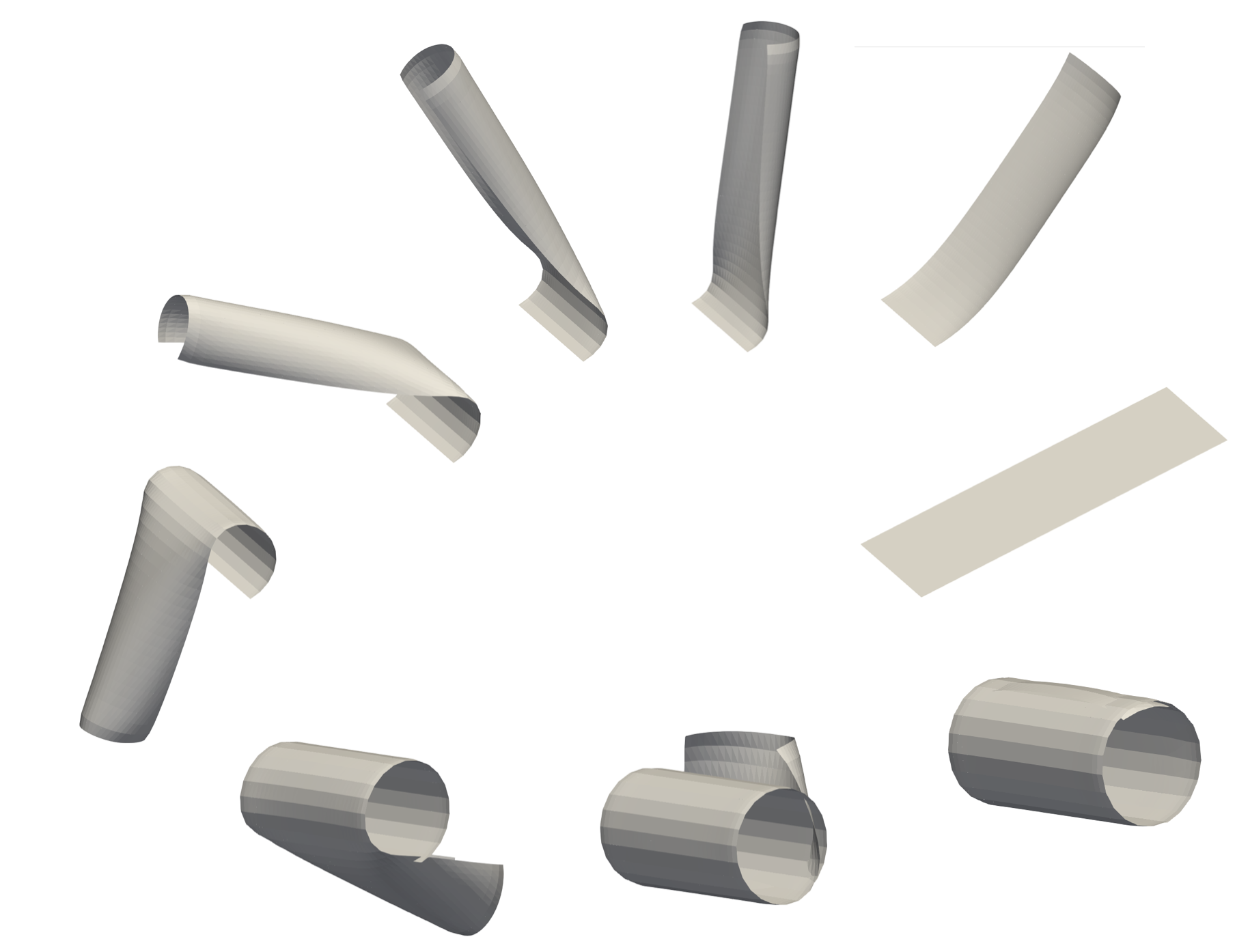}		
	\end{center}
        \caption[Bilayer plates: cylinder]{\small Isotropic curvature: Relaxation dynamics of Algorithm \ref{algo:main_GF_bi} towards the cylinder equilibrium shape of a clamped rectangular plate with the isotropic spontaneous curvature $Z = I$. The bilayer plate is depicted at times $0, 50, 1000, 9000, 18000, 36050, 48100, 56050, 72100$ of the gradient flow (counter-clockwise).}
        \label{fig:bi-cylinder}
\end{figure}

%-------------------------------------------------------------------------------
\subsection{Free plate: Anisotropic curvature}\label{S:anisotropy}
%-------------------------------------------------------------------------------
%
We now explore a cigar-type configuration motivated by experiments \cite{janbaz2016programming} and computations
\cite{bartels2017bilayer,bonito2020discontinuous}. The plate is again the rectangle $\Omega=(-5,5)\times(-2,2)$,
but now we impose no boundary condition (free boundary) along with the anisotropic spontaneous curvature
\begin{equation}\label{eq:anisotropy}
Z=
\begin{bmatrix}
~3 & -2 \\
-2 & ~3
\end{bmatrix}.
\end{equation}
We observe that the eigenpairs of $Z$ are $(1, [1,1]^T)$ and $(5,[1,-1]^T)$.
We thus expect that the plate deforms at $-45$ degrees with respect to the Cartesian axes in a symmetric way and eventually reaches a cigar-like configuration, as in \cite{bonito2020discontinuous}. We confirm this in Fig.~\ref{fig:bi-cigar}, that displays computations with $1024$ elements ($30720$ dofs) and $\tau=5\times10^{-3}$. The final energy is $E_h=46.3898$. Remarkably, Algorithm \ref{algo:main_GF_bi} takes fewer iterations to reach the equilibrium configuration than \cite{bonito2020discontinuous}.

\begin{figure}[htbp]
	\begin{center}
		\includegraphics[width=9cm]{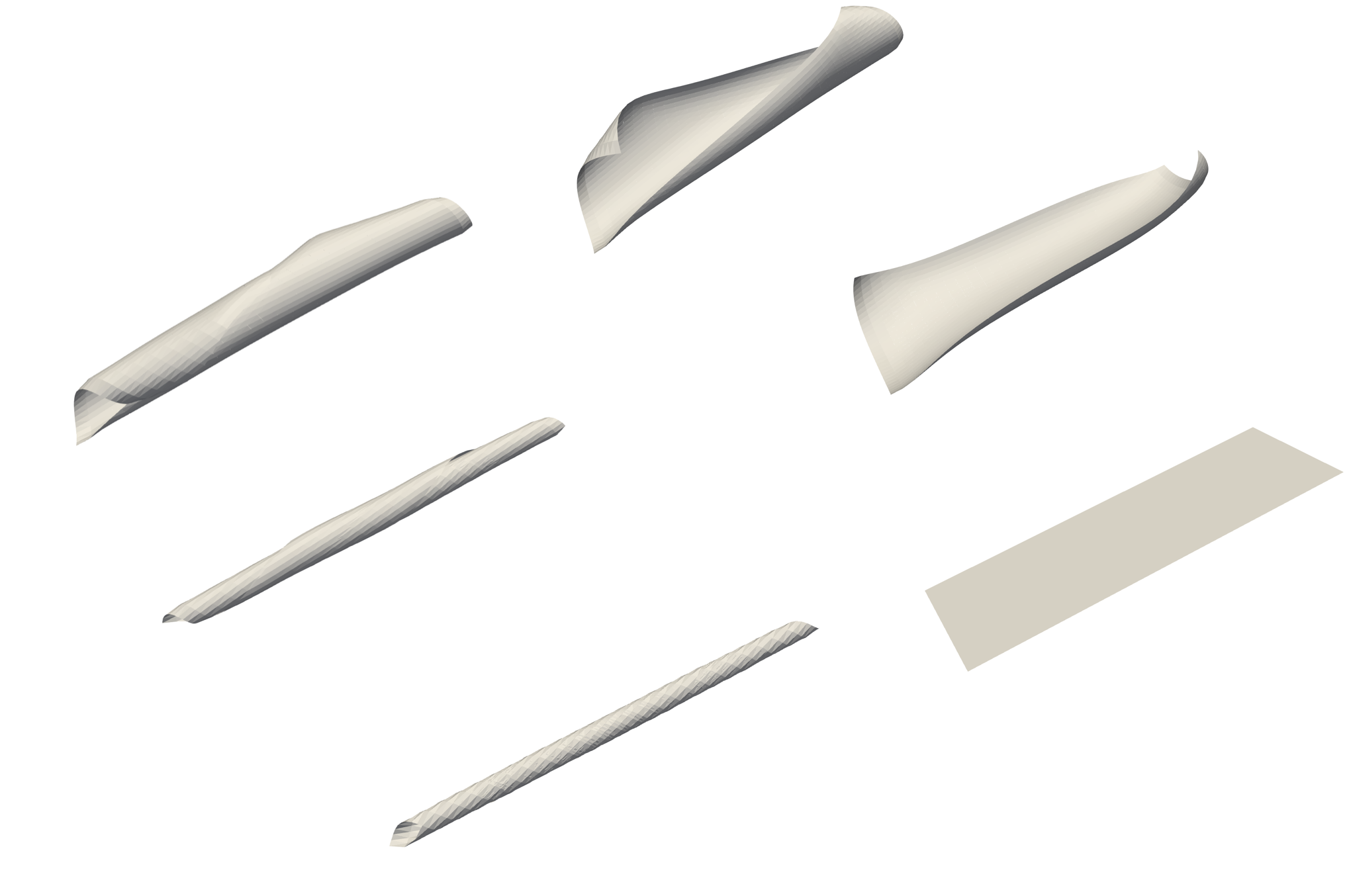}		
	\end{center}
        \caption[Bilayer plates: cigar]{\small Anisotropic curvature: Relaxation dynamics of Algorithm \ref{algo:main_GF_bi} towards the cigar-type equilibrium of a free rectangular plate with the anisotropic spontaneous curvature of \eqref{eq:anisotropy}. The bilayer plate is depicted at times $0, 50, 200, 1000, 10000, 30000$ of the gradient (counter-clockwise).}
        \label{fig:bi-cigar}
\end{figure}

%-------------------------------------------------------------------------------
\subsection{Free plate: Helix shape}\label{S:helix}
%-------------------------------------------------------------------------------
%
\begin{figure}[htbp]
  \begin{center}
		\includegraphics[width=8.5cm]{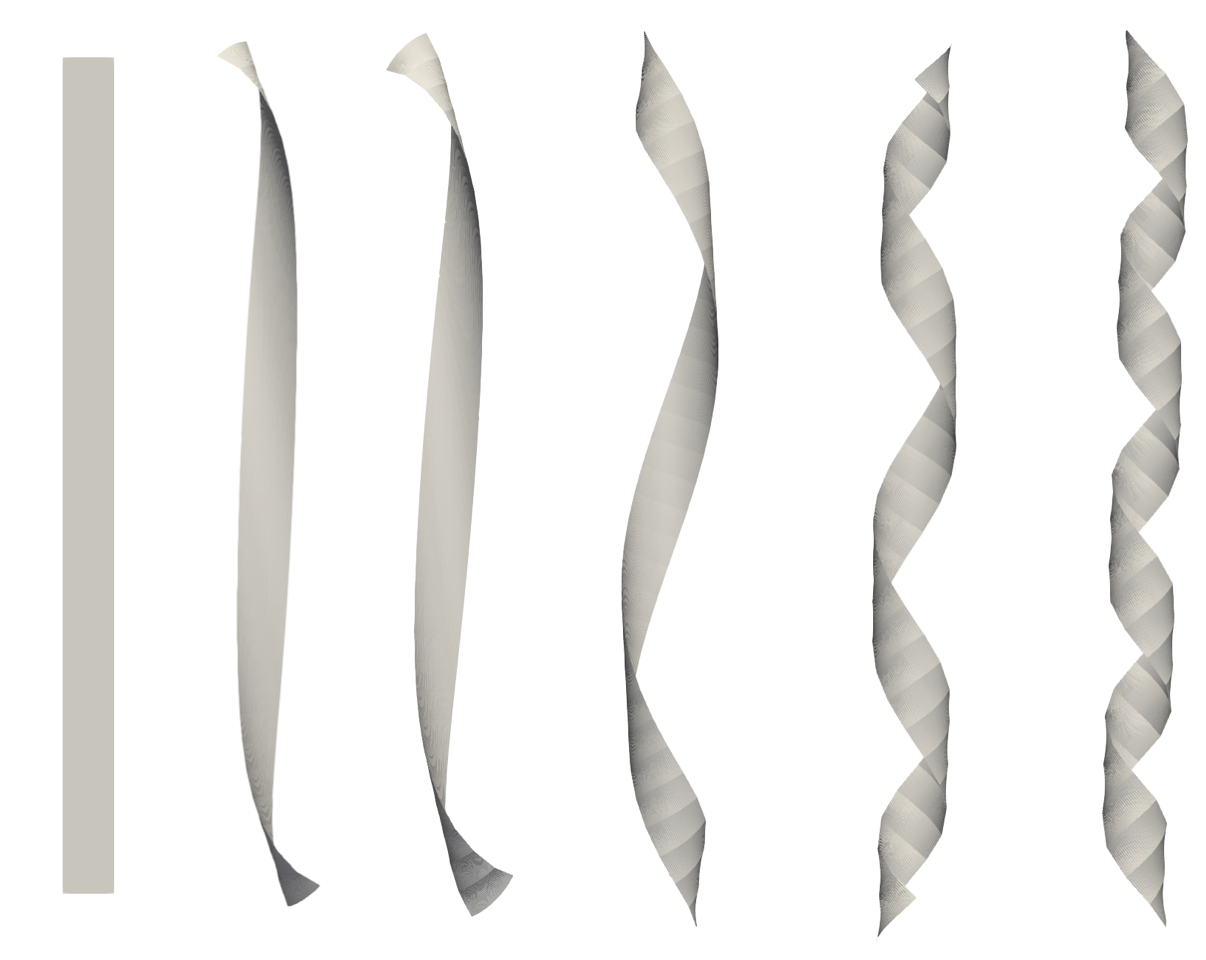}		
  \end{center}
  \caption[Bilayer plates: helix]{\small Anisotropic indefinite curvature: Relaxation dynamics of Algorithm \ref{algo:main_GF_bi}  with spontaneous curvature \eqref{eq:helix} towards a
  DNA-like equilibrium configuration of a free rectangular strip with large aspect ratio. The bilayer plate is depicted at times $0, 100, 200, 1000, 4000, 12600$ of the gradient flow (left to right).}
        \label{fig:bi-helix}
\end{figure}

We present a helix-type shape motivated by a DNA-like configuration \cite{simpson2010capture}. We consider a high aspect ratio plate $\Omega=(-8,8)\times(-0.5,0.5)$, with free boundary condition and anisotropic spontaneous curvature
\begin{equation}\label{eq:helix}
Z=
\begin{bmatrix}
1 & -3/2 \\
-3/2 & 1
\end{bmatrix}.
\end{equation}
We point out that the eigenpairs of $Z$ are $(-\frac12, [1,1]^T)$ and $(\frac52, [1,-1]^T)$, which again correspond to principal directions that form an angle of $45$ degrees with the coordinate axes. This, together with eigenvalues of opposite sign and high aspect ratio, leads to a deformation that resembles the twisting of DNA molecules, as in \cite{bonito2020discontinuous}. We display several snapshots of the relaxation dynamics of Algorithm \ref{algo:main_GF_bi} in {Fig}.~\ref{fig:bi-helix}. 
The simulation is carried out with $1024$ elements and $\tau=10^{-2}$, and yields a final energy $E_h=3.2507$.
Moreover, it again takes fewer iterations for LDG to reach the equilibrium configuration than the DG method of \cite{bonito2020discontinuous}.

%-------------------------------------------------------------------------------
\subsection{Climate responsive architectures}\label{S:climate}
%-------------------------------------------------------------------------------

Bilayer devices can be used to control the temperature or moisture inside a room.
The HygroSkin project \cite{menges2012material,menges2015performative,reichert2015meteorosensitive,krieg2016hygroskin,achimmenges} exploits this technology by designing visually appealing humidity responsive apertures to a pavilion.  
Heat and moisture are thus dynamically controlled without any high-tech equipment owing to the dominant orientation of fibers in plywood.

To simulate this device with our bilayer model, we consider an equilateral triangle with side length 1 and vertices $(0,0)$, $(1,0)$ and $(\frac 1 2, \frac{\sqrt{3}}{2})$.
The actual climate responsive device consists of 6 of these triangular shapes suitably rotated and arranged together as to form a flat regular hexagon, with the exterior edge of each triangle clamped; we refer to {Fig}.~\ref{f:climate_init}. To mimic the effect of different relative humidity values, we choose several anisotropic spontaneous curvatures
\begin{equation}\label{e:sc_open}
Z=\begin{pmatrix}
0 & 0\\
0 & \alpha
\end{pmatrix}, \qquad \textrm{with } \alpha = 0,1,2,3,4,5,
\end{equation}
for the triangle with exterior edge parallel to the $x$-axis and suitably rotated for the other triangles.
This matrix favors bending along the $y$-axis exclusively.

\begin{figure}[ht!]
\centerline{\includegraphics[height=3cm]{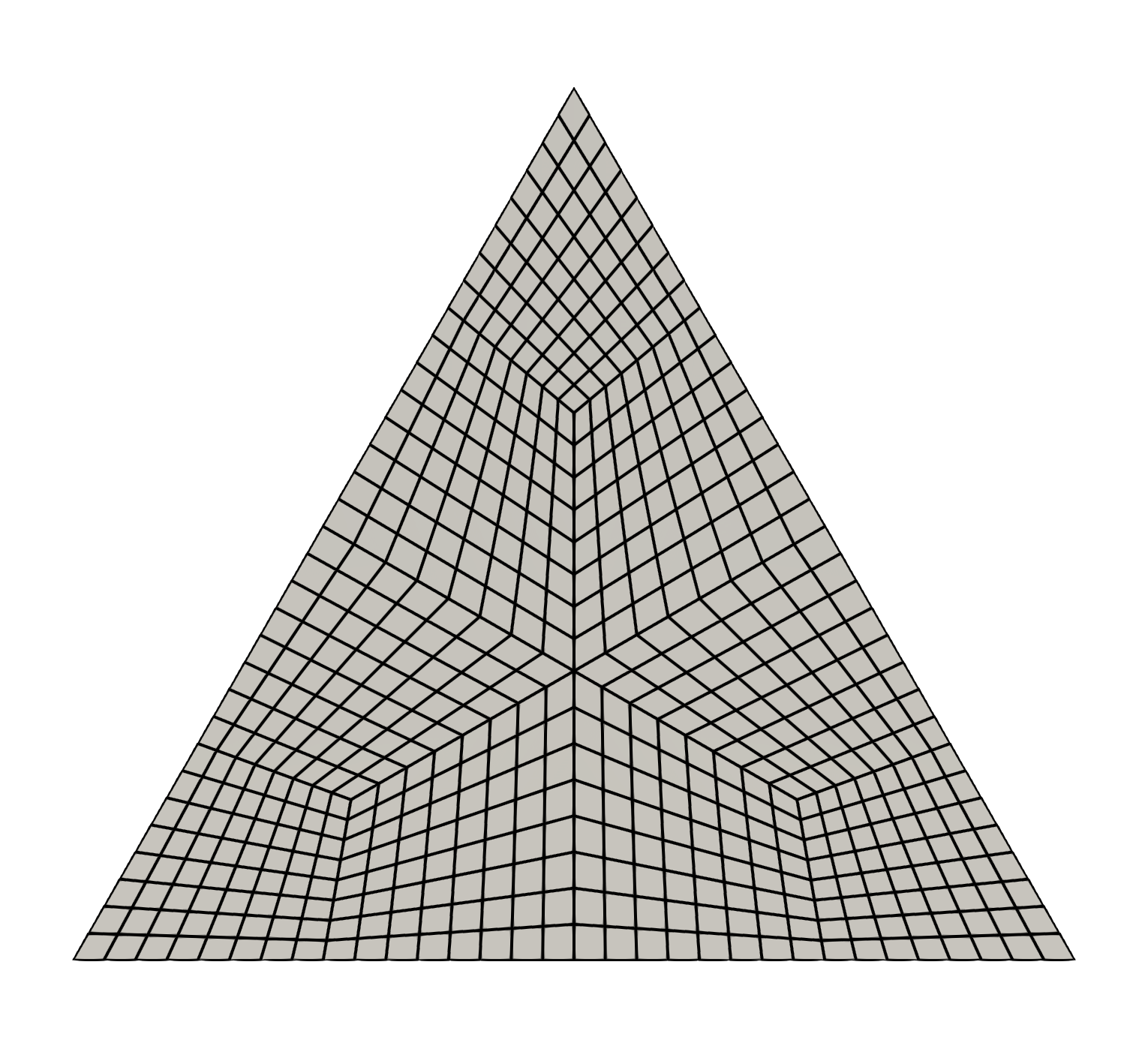}
\includegraphics[height=4cm]{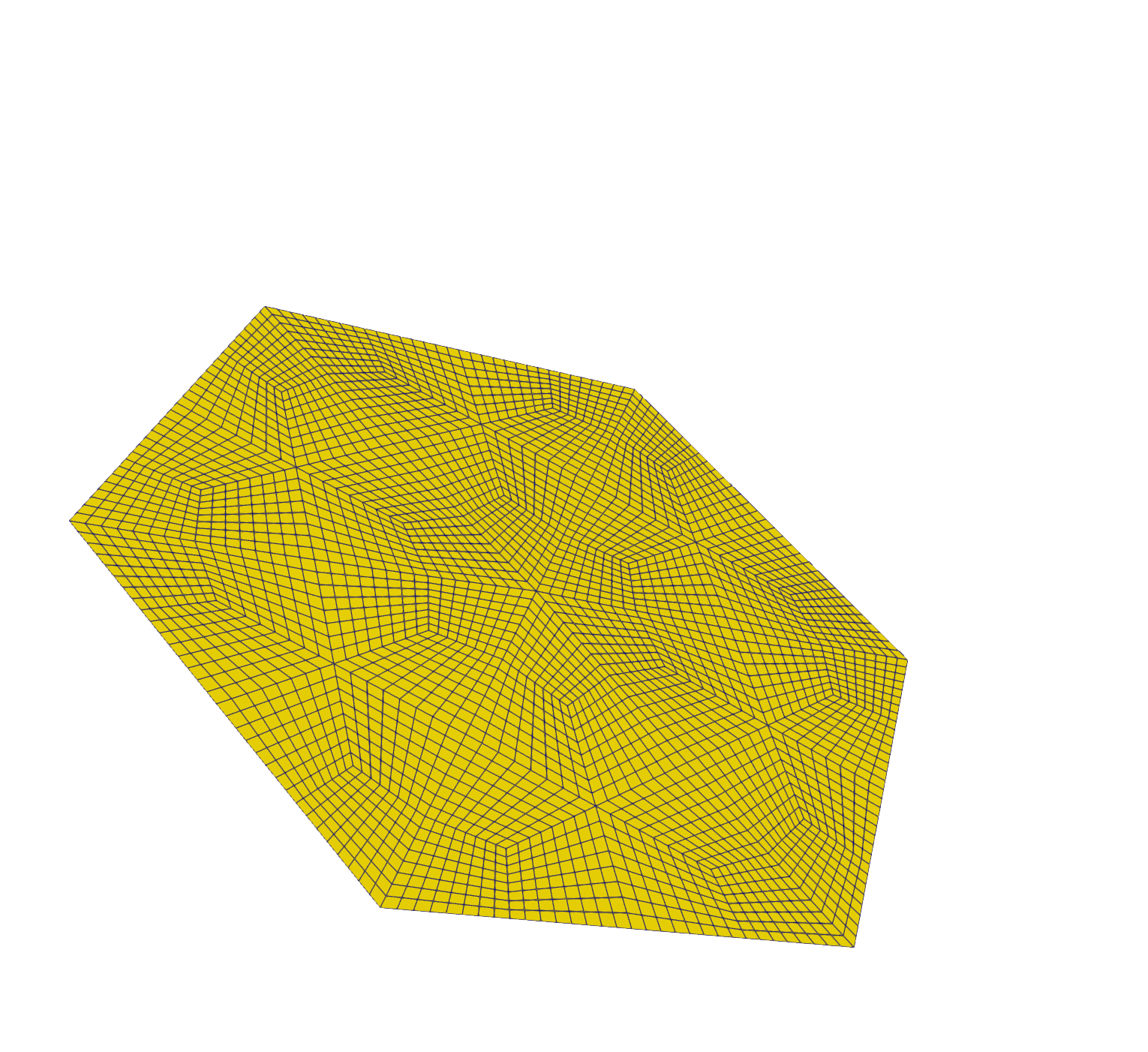}
}
\caption{\small Climate responsive device. The undeformed plate is made of 6 equilateral triangles that together form
a regular hexagon (right). Finite element partition of each triangle into trapezoids (left).}\label{f:climate_init}
\end{figure}

Upon actuation, the climate device automatically opens as depicted in {Fig}.~\ref{f:climate_open}.
The matching of the computed (left) and actual (right) equilibrium shapes in {Fig}.~\ref{f:climate_open}
is quite remarkable for a model with just one parameter $\alpha$ within $Z$.
We run this simulation with time step $\tau=1$ and stopping tolerance $tol=10^{-4}$.

\begin{figure}[ht!]
\begin{tabular}{c|c}
\includegraphics[width=0.45\textwidth]{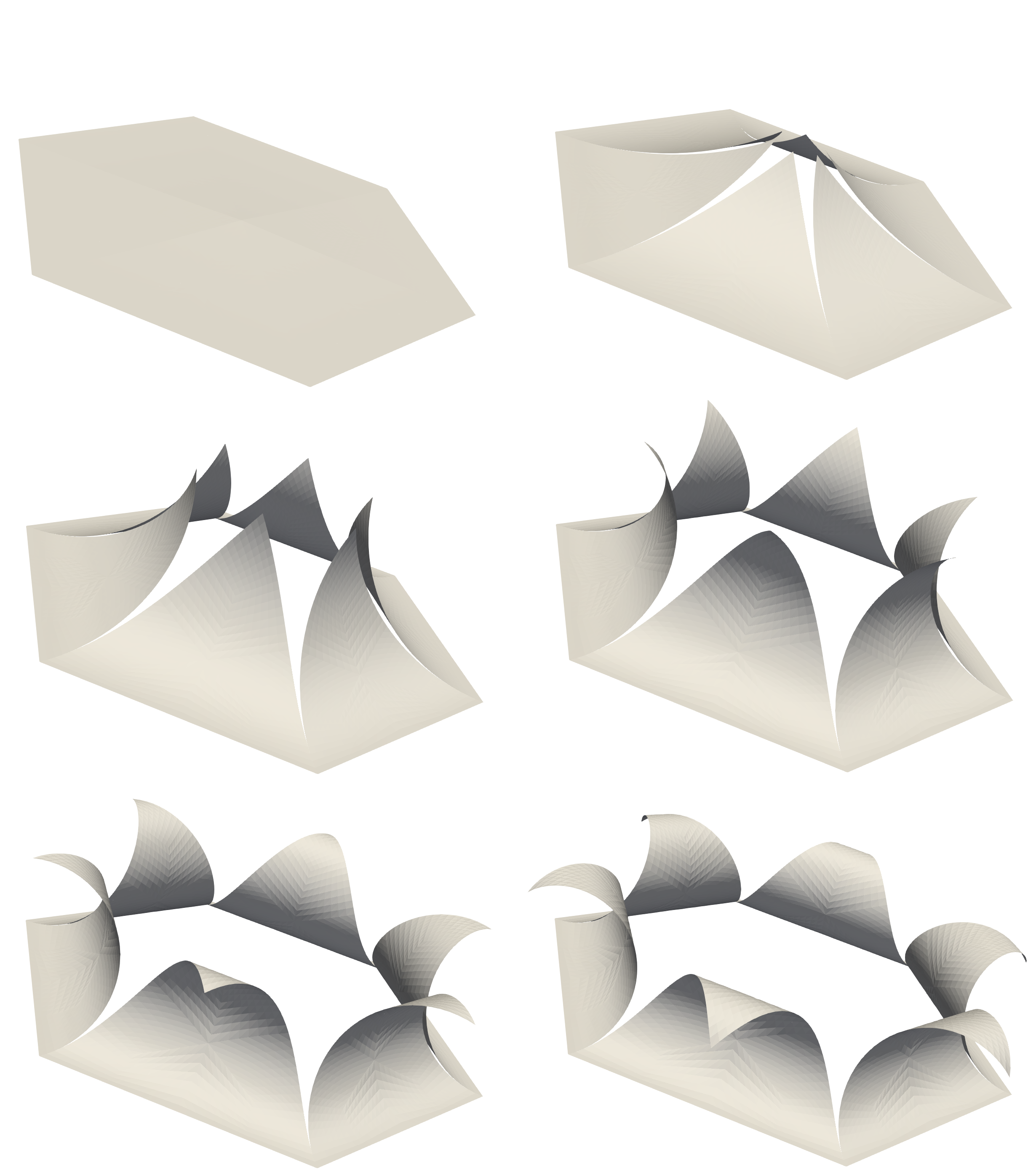} &
\includegraphics[width=0.35\textwidth]{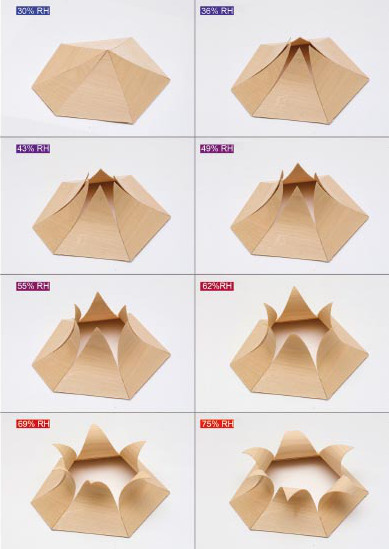}
\end{tabular}
\caption{\small Climate responsive device. (Left) Approximate deformation for different values of spontaneous curvature \eqref{e:sc_open} with parameter $\alpha = 0,1,2,3,4,5$. (from left to right and top to bottom) (Right) Experimental deformations of a device made of plywood. Picture taken from \cite{achimmenges} (courtesy of Prof. Achim Menges); see also \cite{menges2012material}. The matching is remarkable.} \label{f:climate_open}
\end{figure} 
  
%-------------------------------------------------------------------------------
\subsection{Folding Model: Bilayer Origami} \label{S:origami}
%-------------------------------------------------------------------------------

We finally explore computationally the combined effect of spontaneous curvature, as driving mechanism,
and folding across a preassigned crease. The corresponding bilayer model and LDG method are
discussed in Section \ref{S:creases}.
We consider below the setting from \cite[Section 5.2]{BBH2021Folding} and refer to \cite{bonito2022numerical} for additional numerical simulations.

\begin{figure}[h!]
\begin{tabular}{r|c|l}
\includegraphics[height=3.2cm]{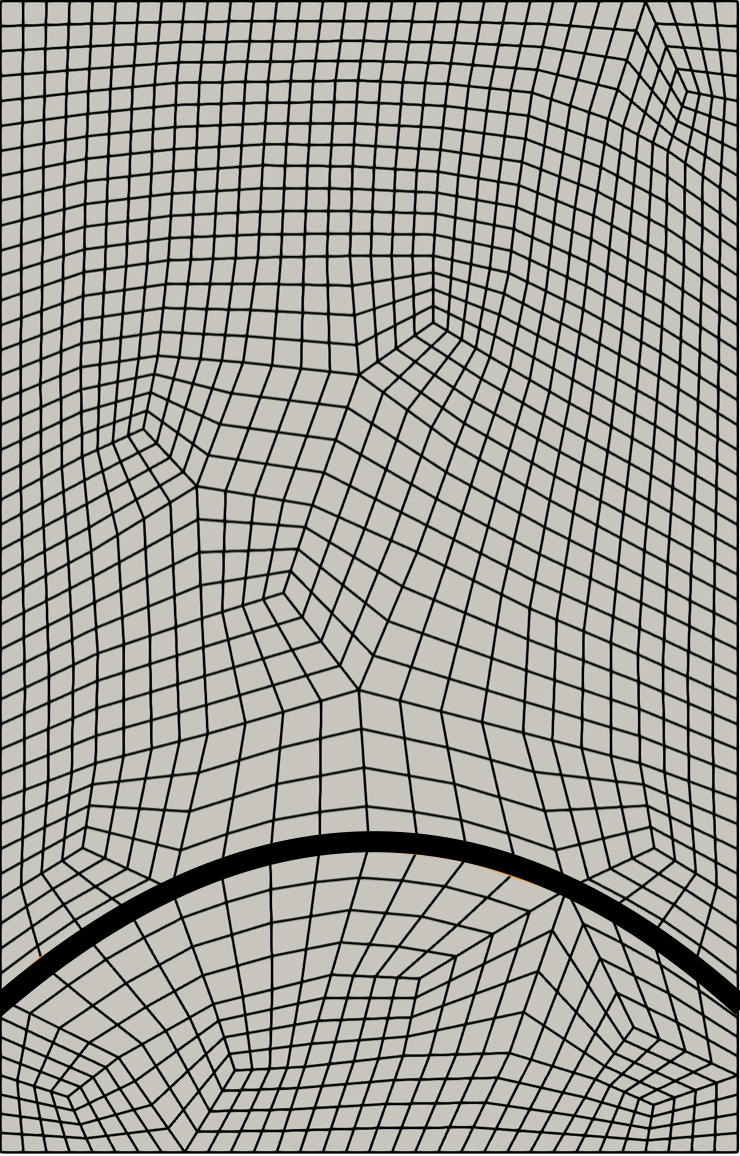}
&
\includegraphics[height=3.5cm]{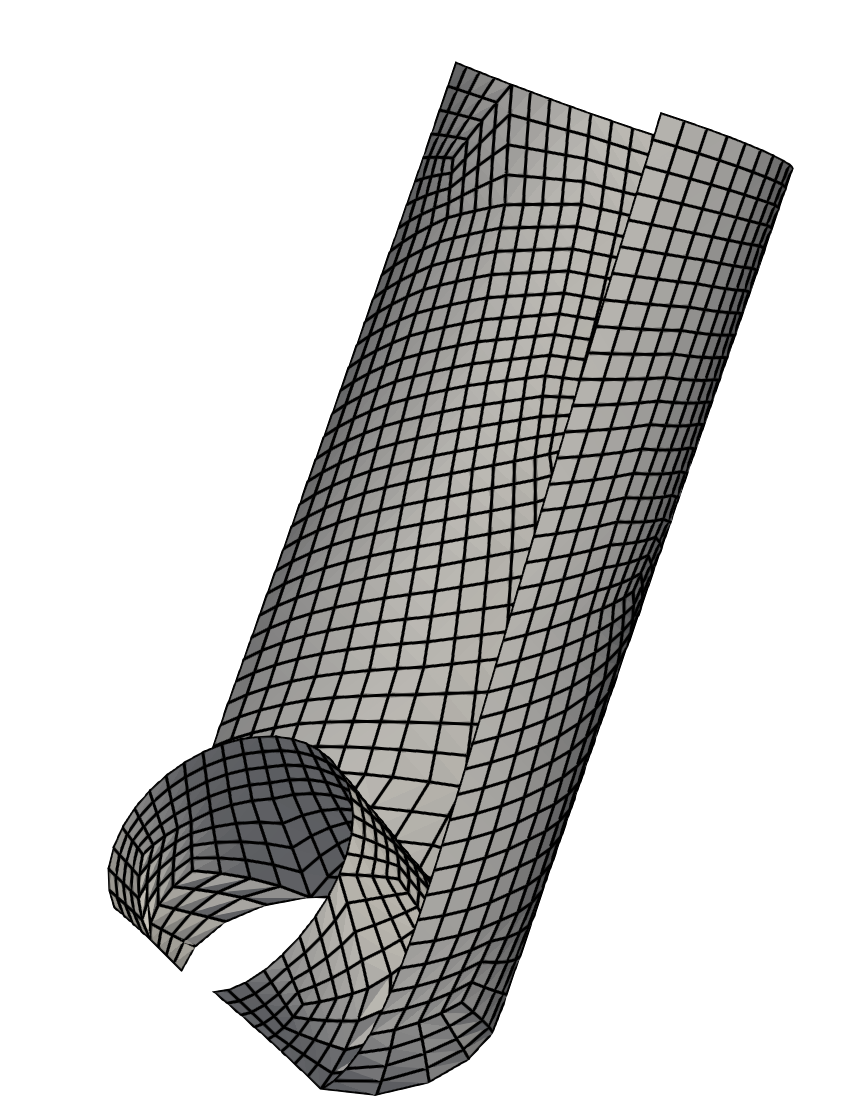}
\includegraphics[height=3.5cm]{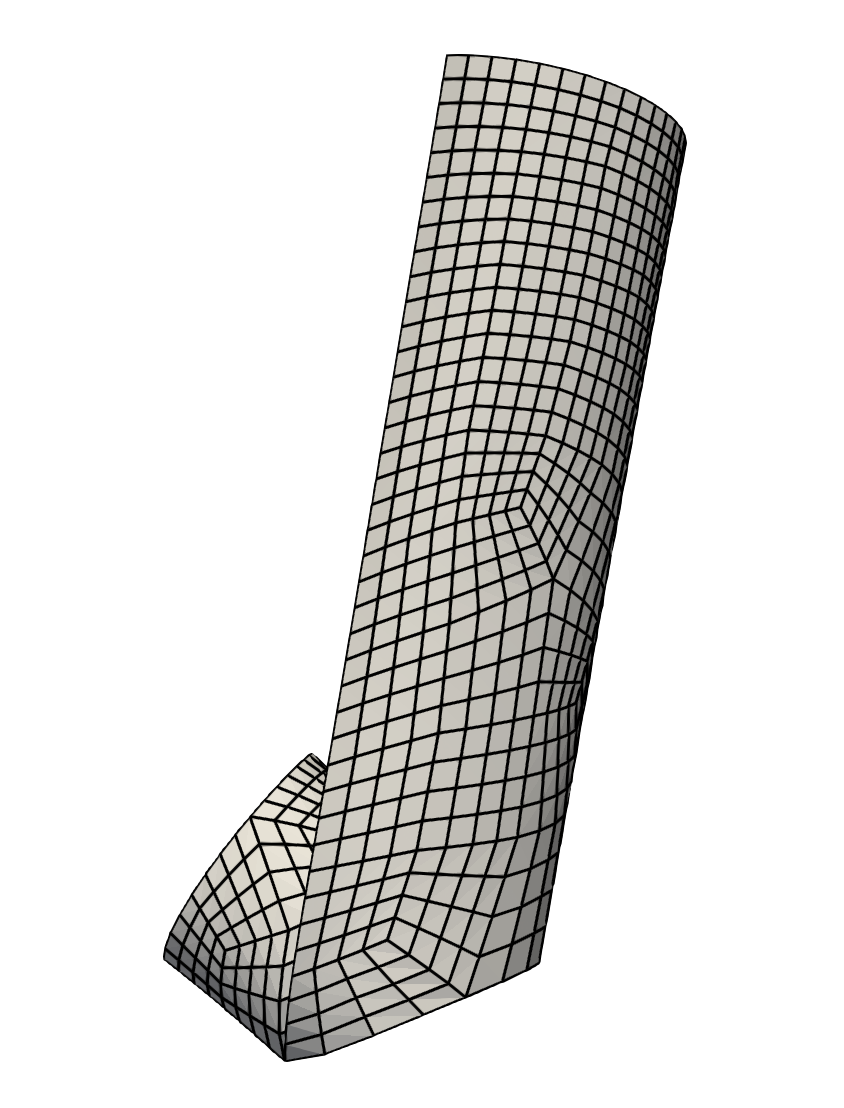}
\includegraphics[height=3.5cm]{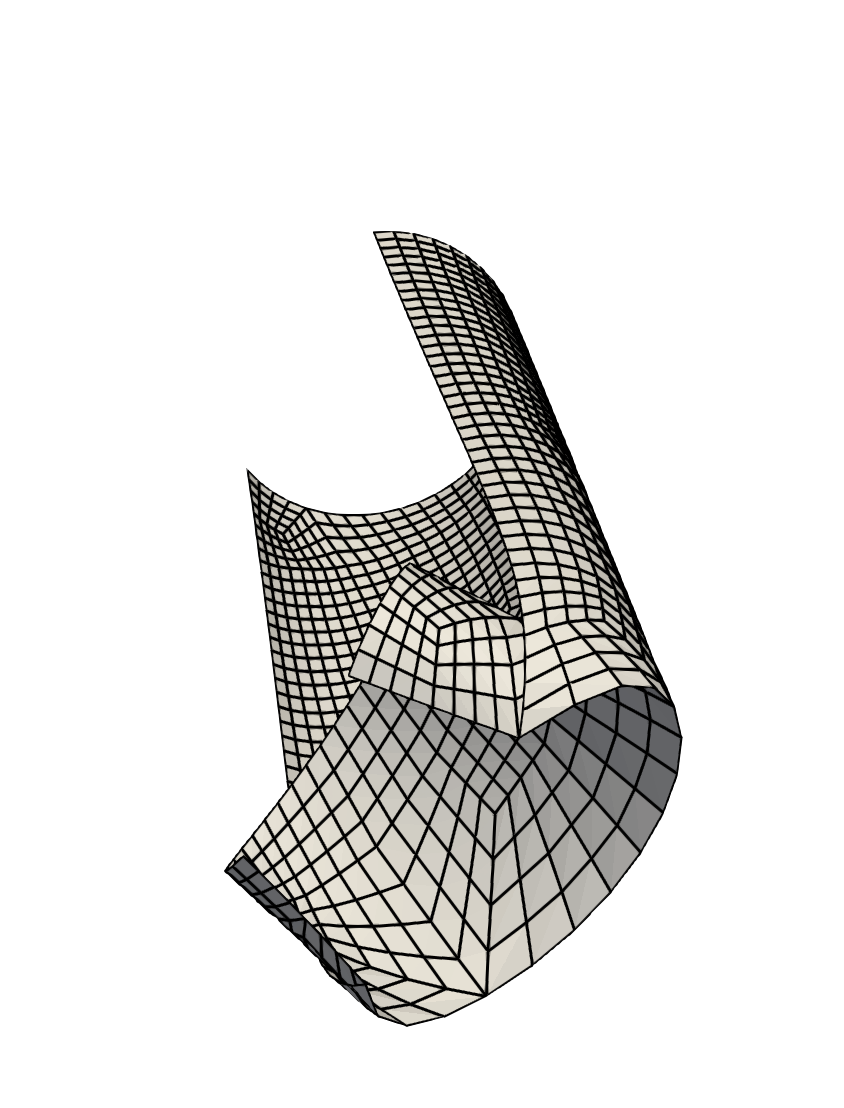}
&
\includegraphics[height=3.2cm]{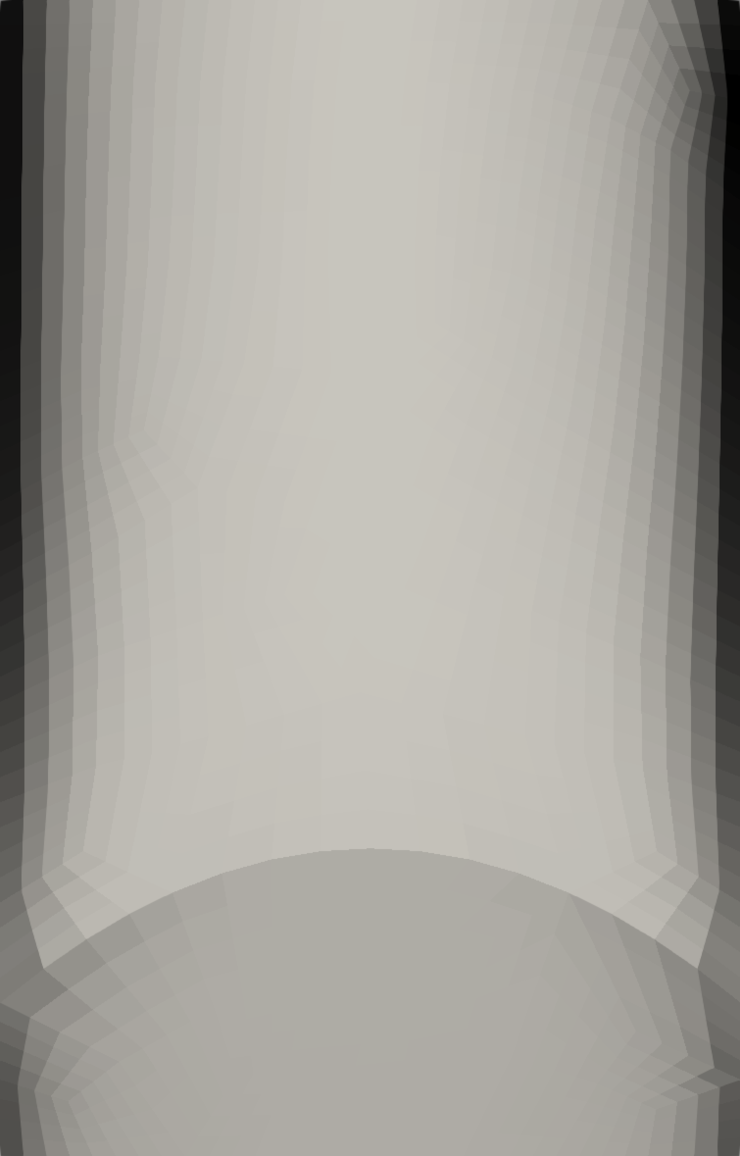}
\end{tabular}
\caption{\small {Bilayer origami: Flapping mechanism generated by folding of a bilayer plate across a crease. (Left) Conforming subdivision of $\Omega$ with quadratic crease. (Middle) Different perspectives of the resulting very large deformation. (Right) Isometry defect $D_h[\vy_h](x)$ ranging from $3.0 \times 10^{-5}$ (white) to $3.9 \times 10^{-2}$ (black).} }\label{f:folding}
\end{figure}

The computational domain is a rectangle  $\Omega=(0,9.6) \times (0,15)$ and the folding crease is a
quadratic curve $\Ccal$ passing through the points $(0,2)$, $(9.6,2)$, and $(4.8,6)$, which can be
exactly represented by the isoparametric mesh $\Th$; see Fig.~\ref{f:folding}.
In order to generate a configuration similar to the flapping mechanism in \cite{BBH2021Folding},
which is obtained by compression of the lateral boundary, we set the spontaneous curvatures
\[
Z = \begin{pmatrix} 0 & 0 \\ 0 & {\frac 1 2}\end{pmatrix},
\quad
Z = \begin{pmatrix} 0 & 0 \\ 0 & -\frac 12 \end{pmatrix},
\]
below the folding arc and above of it, respectively, and do not impose any boundary condition. The resulting equilibrium shape is displayed in Fig.~\ref{f:folding} {along with the isometry defect $D_h[\vy_h](x)$ at equilibrium; it ranges from $3.0 \times 10^{-5}$ to $3.9 \times 10^{-2}$.} We point out the crucial role played by the sign of principal curvatures $\lambda = {\frac12}, -\frac12$ corresponding to the same coordinate eigendirection: bending of the lower and upper plates occurs in opposite directions which gives rise to folding across the crease and yields a rather large compatible deformation.

%%%%%%%%%%%%%%%%%%%%%%%%%%%%%%%%%%%%%%%%%%%%%%%%%%%%%%%%%%%%%%%%%%%%%%%%%%%%%%%%%
\section{Conclusions}
%%%%%%%%%%%%%%%%%%%%%%%%%%%%%%%%%%%%%%%%%%%%%%%%%%%%%%%%%%%%%%%%%%%%%%%%%%%%%%%%%
In this article, we present a new LDG method for large bending isometric deformations of bilayer plates. We summarize our contributions in this section.  

\smallskip\noindent
1. \emph{LDG discretization.} It consists of replacing the Hessian $D^2\vy$ by a reconstructed Hessian $H_h[\vy_h]$ in the bending energy $B_h[\vy_h]$, and by a reduced (piecewise constant) discrete Hessian $\overline H_h[\vy_h]$ in the cubic energy $C_h[\vy_h]$, which encodes the interaction with spontaneous curvature. We use the mid-point quadrature to integrate $C_h[\vy_h]$.

\smallskip\noindent
2. \emph{{Relaxed} isometry constraint.} This allows for a slight violation of the isometry constraint $\I[\vy_h]=I_2$ while providing control of the $\ell^\infty$-norm of the isometry defect {$D_h[\vy_h]=\big|\I[\vy_h]-I_2\big|$} at element barycenters. This turns out to be a significant improvement over previous DG methods that enforce such defect as sum of averages over elements \cite{bonito2020ldg,bonito2020ldg-na,bonito2019dg,bonito2020discontinuous}.

\smallskip\noindent
3. \emph{$\Gamma$-convergence.} The key novelty of the $\Gamma$-convergence of discrete energies is the construction of the recovery sequence of any admissible deformation $\vy \in [H^2(\Omega) \cap W^1_\infty(\Omega)]^3$. It hinges on a quadratic Taylor expansion at element barycenters of a suitable regularization of $\vy$, and exploits that both the reduced quadrature of $C_h[\vy_h]$ and the isometry defect {$D_h[\vy_h]$} are imposed at element barycenters.

\smallskip\noindent
4. \emph{Bilayer model with foldings.} We extend the LDG method to deal with a piecewise quadratic crease $\mathcal{C}$  and prove its convergence. The construction of a recovery sequence for one absolute minimizer $\vy^* \in [H^2(\Omega\backslash\mathcal{C}) \cap W^1_\infty(\Omega)]^3$ requires the slightly stronger assumption that $\vy^*$ is $C^1$ in each subdomain created by $\mathcal{C}$.

\smallskip\noindent
5. \emph{Fully linear solver.} We design a semi-implicit discrete gradient flow that treats $B_h[\vy_h]$ implicitly and $C_h[\vy_h]$ explicitly. This leads to linear problems at each step. The scheme retains the key property of being energy diminishing and controls the isometry defect provided the fictitious time step $\tau$ satisfies a mild constraint.

\smallskip\noindent
6. \emph{Sub-optimal discrete inf-sup.} As is customary in the literature \cite{bartels2013approximation,bartels2018modeling,bartels2017bilayer,bartels2020stable,bonito2020ldg,bonito2020ldg-na,bonito2019dg,bonito2020discontinuous}, we rely on Lagrange multipliers to enforce the linearized isometry constraint. We prove a sub-optimal inf-sup condition for the resulting saddle-point system, which seems to be the first such result for these type of problems.

\smallskip\noindent
7. \emph{Simulations.} We present several insightful numerical experiments with large isometric deformations, including
a climate responsive device and the folding of a plate across a quadratic crease that yields a bilayer origami as equilibrium shape. {We also document the size of the isometry defect $D_h[\vy_h]$ for the latter.}

%%%%%%%%%%%%%%%%%%%%%%%%%%%%%%%%%%%%%%%%%%%%%%%%%%%%%%%%%%%%%%%%%%%%%%%%%%%%%%%%%
\bibliographystyle{acm}
\bibliography{ref}

\end{document}